\documentclass[11pt]{article}

\textwidth 152mm \oddsidemargin 0.75cm \evensidemargin 0.75cm
\addtolength{\textheight}{2cm} \addtolength{\topmargin}{-1cm}

\usepackage{amsfonts, amsmath, amsthm, esint,graphicx, color}

\newcommand{\R}{\mathbb{R}}

\newcommand{\N}{\mathbb{N}}
\newcommand{\Ss}{\mathbb{S}}
\newcommand{\Z}{\mathbb{Z}}
\newcommand{\A}{\mathcal{A}}
\newcommand{\E}{\mathcal{E}}
\newcommand{\set}[2]{\left\{#1 \colon #2\right\}}
\newcommand{\dd}[2]{\frac{\partial #1}{\partial #2}}

\newcommand{\loc}{\mathrm{loc}}
\newcommand{\blank}{{\mkern 2mu\cdot\mkern 2mu}}
\newcommand{\scp}[2]{\left\langle #1, #2 \right\rangle}

\newcommand{\eps}{\varepsilon}

\newcommand{\be}{\begin{equation}}
\newcommand{\ee}{\end{equation}}

\renewcommand{\div}{\operatorname{div}}

\DeclareMathOperator{\curl}{curl}
\DeclareMathOperator{\supp}{supp}
\DeclareMathOperator*{\osc}{osc}
\DeclareMathOperator{\diam}{diam}

\newtheorem{theorem}{Theorem}
\newtheorem{lemma}[theorem]{Lemma}
\newtheorem{corollary}[theorem]{Corollary}
\newtheorem{proposition}[theorem]{Proposition}
\newtheorem{definition}[theorem]{Definition}
\newtheorem{remark}[theorem]{Remark}

\newcounter{stepno}
\newenvironment{severalsteps}{\setcounter{stepno}{0}}{}
\newcommand{\step}[1]{\refstepcounter{stepno} \par \smallskip \noindent \textit{Step \arabic{stepno}: #1.} }

\newcounter{caseno}

\newenvironment{severalcases}{\setcounter{caseno}{0}}{}
\newcommand{\case}{\refstepcounter{caseno} \par \smallskip \noindent \textit{Case \arabic{caseno}:} }

\title{Energy minimisers of prescribed winding number in an $\Ss^1$-valued nonlocal Allen-Cahn type model}

\author{
{\Large Radu Ignat}\footnote{Institut de Math\'ematiques de Toulouse \& Institut Universitaire de France, UMR 5219, Universit\'e de Toulouse, CNRS, UPS
IMT, F-31062 Toulouse Cedex 9, France. Email: Radu.Ignat@math.univ-toulouse.fr} 
\and {\Large Roger Moser}\footnote{Department of Mathematical Sciences,
University of Bath,
Bath BA2 7AY,
UK. Email: r.moser@bath.ac.uk}
}

\begin{document}

\maketitle

\begin{abstract}
We study a variational model for transition layers in thin ferromagnetic films with an underlying functional that
combines an Allen-Cahn type structure with an additional nonlocal
interaction term. The model represents the magnetisation by a map from $\R$ to $\Ss^1$.
Thus it has a topological invariant in the
form of a winding number, and we study minimisers subject to a
prescribed winding number. As shown in our previous paper \cite{Ignat-Moser:17},
the nonlocal term gives rise to solutions that would not be present
for a functional including only the (local) Allen-Cahn terms.
We complete the picture here by proving existence of minimisers in all cases where
it has been conjectured. In addition, we prove non-existence in some other cases.
\end{abstract}

\bigskip\noindent
\textbf{Keywords:}
domain walls, Allen-Cahn, nonlocal, existence of minimizers, topological degree, concentration-compactness, micromagnetics.

\section{Introduction}

In this paper we study a variational model coming from the theory of
micromagnetics. In soft thin films of ferromagnetic materials, one of the predominant
structures in the magnetisation field is a type of transition layer, called a N\'eel wall.
We consider a simplified, one-dimensional variational
model for N\'eel walls and study the question whether several transitions may
combine to form a more complex transition layer.
The same model has been used by several authors to analyse N\'eel walls in terms of
existence, uniqueness, and properties of solutions to the Euler-Lagrange equations,
but mostly for single transitions (see Section \ref{sec:neel} for more details).
For the background and the derivation of the model, we refer to
the papers
\cite{DKMOreduced, DKMO05, HS98}.

\subsection{The model} We begin with a description of the variational model used for our theory.
For a given parameter $h \in [0, 1]$, consider maps $m=(m_1, m_2):\R \to \Ss^1$ with values on the unit circle $\Ss^1$.
We study the functional
\[
E_h(m) = \frac{1}{2} \left(\|m'\|^2_{L^2(\R)} + \|m_1-h\|^2_{\dot{H}^{1/2}(\R)} + \|m_1-h\|^2_{L^{2}(\R)}\right),
\]
where $m'$ is the derivative of $m$ and $\dot{H}^{1/2}(\R)$ denotes the homogeneous Sobolev space of order $1/2$ (a different representation of
the corresponding term is given shortly). The first two terms in this functional represent what is called
the exchange energy and the stray field energy in the full micromagnetic model, and we will use these expressions
here as well. The third term comes from a combination of crystalline anisotropy and an external magnetic
field. For simplicity, we call this term the anisotropy energy.

The stray field energy $\|m_1-h\|^2_{\dot{H}^{1/2}(\R)}$ arises from the micromagnetic theory in conjunction
with a stray field potential $u \colon \R_+^2 \to \R$ (where $\R_+^2 = \R \times (0, \infty)$), which
solves
\begin{alignat}{2}
\Delta u & = 0 & \quad & \text{in $\R_+^2$}, \label{eqn:u_harmonic_higher} \\
\dd{u}{x_2} & = -m_1' && \text{on $\R \times \{0\}$}. \label{eqn:boundary_condition_higher}
\end{alignat}
This boundary value problem has a unique solution up to constants if we impose finite Dirichlet energy.
(We will discuss this point in more detail in Section \ref{sect:prelim}.)
The solution then satisfies
\[
\int_{\R_+^2} |\nabla u|^2 \, dx=\|m_1-h\|^2_{\dot{H}^{1/2}(\R)}.
\]
(The constant $h$ may seem irrelevant here, because $\|\blank\|_{\dot{H}^{1/2}(\R)}$ is a
seminorm that vanishes on constant functions. Notwithstanding, we will keep $h$ in the expression
as a reminder that $m_1 - h$ decays to $0$ at $\pm \infty$ for the profiles we are interested in.)
For some of the arguments in this paper, however, the following double
integral representation is more convenient (see, e.g., \cite{DiNezza-Palatucci-Valdinoci:12}):
\begin{equation} \label{eqn:double_integral}
\|m_1-h\|^2_{\dot{H}^{1/2}(\R)} = \frac{1}{2\pi} \int_{\R}\int_{\R}\frac{|m_1(x_1)-m_1(y_1)|^2}{|x_1-y_1|^2}\,dx_1 dy_1.
\end{equation}

As remarked previously, the third term in the energy functional represents, up to the constant
$h^2$, the combined effects of the
anisotropy potential $V(m) = m_1^2=1-m_2^2$ (favouring the  
easy axis $\pm e_2$) and the external field $-2h e_1$ (favouring the direction $e_1 = (1, 0)$ for positive $h$)
that gives rise to the term $-2h e_1\cdot m=-2hm_1$.
We denote
\[
\alpha = \arccos h\in [0, \pi/2],
\]
and we assume this relationship between $h$ and $\alpha$ throughout the paper.
Then the resulting potential
\[
W(m) = (m_1 - h)^2, \quad m\in \Ss^1,
\]
has two wells on the unit circle if $h \in [0,1)$,
which are at $(\cos \alpha, \pm \sin \alpha)$, and one well at $(1, 0)$ if $h=1$.
If we write $m=(\cos \phi, \sin \phi) \in \Ss^1$, then we can further observe that
$W$ grows quadratically in $\phi \pm \alpha$
near these wells if $h<1$ and quartically in $\phi$ near the well $(1,0)$ if $h=1$.

In principle, we could allow $h > 1$ as well, but 
the questions studied in this paper are
completely understood in this case 
by our previous work \cite{Ignat-Moser:17}. Since the case $h > 1$ would require
a somewhat different representation of the potential $W$, we omit the discussion here; however, 
we wish to point out that our previous paper \cite{Ignat-Moser:17} also \emph{partially} treats the case $h \in [0, 1)$
(in addition to $h > 1$), but not
$h = 1$, because the quartic growth of $W$ near the wells is not compatible with the methods used there.
From the physical point of view, the cases $h \ge 1$ and $h < 1$ are equally interesting, but
mathematically the latter is more interesting because it gives rise to the more intricate
patterns.

\subsection{N\'eel walls}
\label{sec:neel}

A transition of $m$ between the wells of the potential $W$ on $\Ss^1$, as illustrated in
Figure~\ref{fig:arrows}, represents a N\'eel wall (in the
micromagnetics terminology). In the case of $h=1$ a transition means that $m$ describes
a full rotation around $\Ss^1$. Thus in this case,
we have the transition angles $\pm 2\pi$, while for $1>h=\cos \alpha$, we have the possible transitions
angles $\pm 2\alpha$ and $\pm 2(\pi - \alpha)$ (see Figure
\ref{fig:arrows}).
\begin{figure}[htb!]
\centering
\begin{minipage}{.3\linewidth}
\centering
\includegraphics[width=\linewidth]{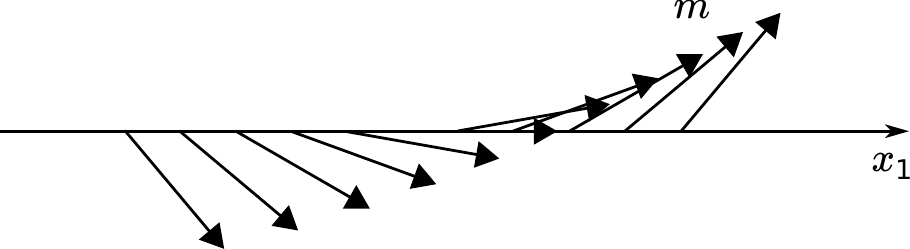}
\end{minipage}
\hspace{1cm}
\begin{minipage}{.55\linewidth}
\centering
\includegraphics[width=\linewidth]{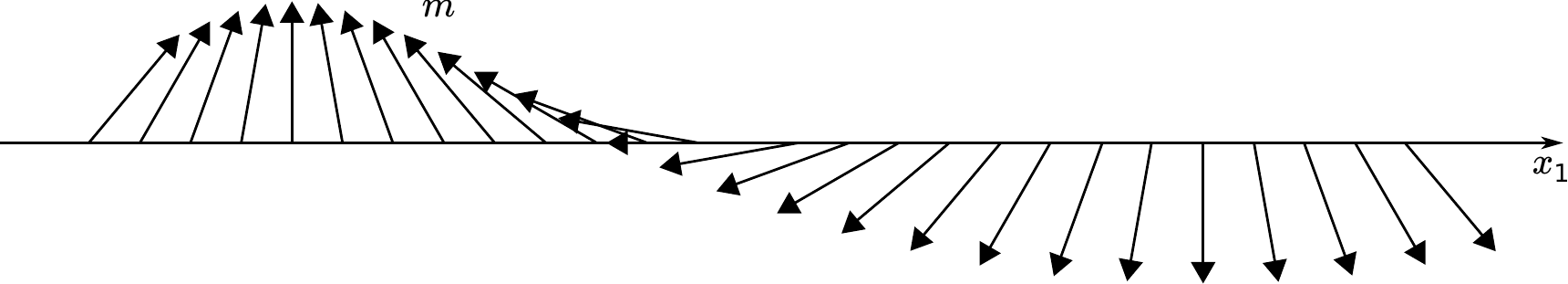}
\end{minipage}
\caption{Schematic representation of a N\'eel wall of angle $2\alpha$ (left) and $2\pi - 2\alpha$ (right).}
\label{fig:arrows}
\end{figure}
These are the most simple transitions, going from one well to the next. It is also conceivable, however,
that $m$ will pass several wells during the transition. Such behaviour is in fact necessary for profiles
of winding number $1$ or above (in the case $h < 1$), which is why we discuss the winding number {in the next section}.

Existence, uniqueness and structure of locally energy minimising profiles including a N\'eel wall of angle $2\alpha$,
for $\alpha\in (0, \frac \pi 2]$, have been proved in \cite{Cervera, Ignat-Moser:17, Me1, Me2}. In this context,
a N\'eel wall of angle $2\alpha$ is a (unique) two-scale object: it has a core of length $l \sim 1$ and two tails of larger length scale $\gg l$, where $m_1-h$ decays logarithmically. Stability, compactness, and optimality of N\'eel walls under two-dimensional perturbations have been proved in \cite{Cote-Ignat-Miot, DKO, IO}. Existence and uniqueness results are also available for N\'eel walls of larger angle $\alpha\in (\frac \pi 2, \pi)$ (see \cite{Chermisi-Muratov:13, Ignat-Moser:17}) as well as for transition layers with prescribed winding number combining several N\'eel walls (see \cite{Ignat_Knupfer, Ignat-Moser:17}).
Furthermore, the interaction between several N\'eel walls has been determined in terms of the energy (see \cite{DKMO_rep, Ignat-Moser:16}).

\subsection{Winding number}

To each finite energy configuration $m:\R \to \Ss^1$ we associate a winding number $\deg(m)$ as follows.
We first note that a map $m \in H_\loc^1(\R; \Ss^1)$ is necessarily continuous and has a continuous lifting
$\phi \colon \R \to \R$ with $m = (\cos \phi, \sin \phi)$ in $\R$. Moreover, the lifting $\phi$ is unique up to a constant.
If $E_h(m) < \infty$, then it follows that $\lim_{x_1 \to \pm \infty} m(x_1) = (\cos \alpha, \pm \sin \alpha)$
(where the signs on both sides of the equation are independent of one another).
Thus
\[
\deg(m) = \frac{1}{2\pi} \lim_{x_1 \to \infty} (\phi(x_1) - \phi(-x_1))
\]
is well-defined and belongs to $\Z \pm \{0, \alpha/\pi\}$. 

\subsection{Objective of the paper}

Our aim is to analyse the existence of minimisers $m$ of $E_h$ subject to a prescribed
winding number. For $d \in \Z \pm \{0, \alpha/\pi\}$,
we define
\[
\A_h(d) = \set{m \in  H_\loc^1(\R; \Ss^1)}{E_h(m) < \infty \text{ and } \deg(m) = d}
\]
and
\[
\E_h(d) = \inf_{m \in \A_h(d)} E_h(m).
\]
If we can find a minimiser of $E_h$ within $\mathcal{A}_h(d)$, then this will automatically be
a critical point of winding number $d$. It will in general consist of several transitions between the
points $(\cos \alpha, \pm \sin \alpha)$ on the unit circle; therefore, it can be thought of
as a composite N\'eel wall consisting of several transitions stuck together. The first component
$m_1$ of such configurations is shown schematically in Figures \ref{fig:h=1} and \ref{fig:h<1}.
\begin{figure}[htb!]
\centering
 \includegraphics[width=.6\linewidth]{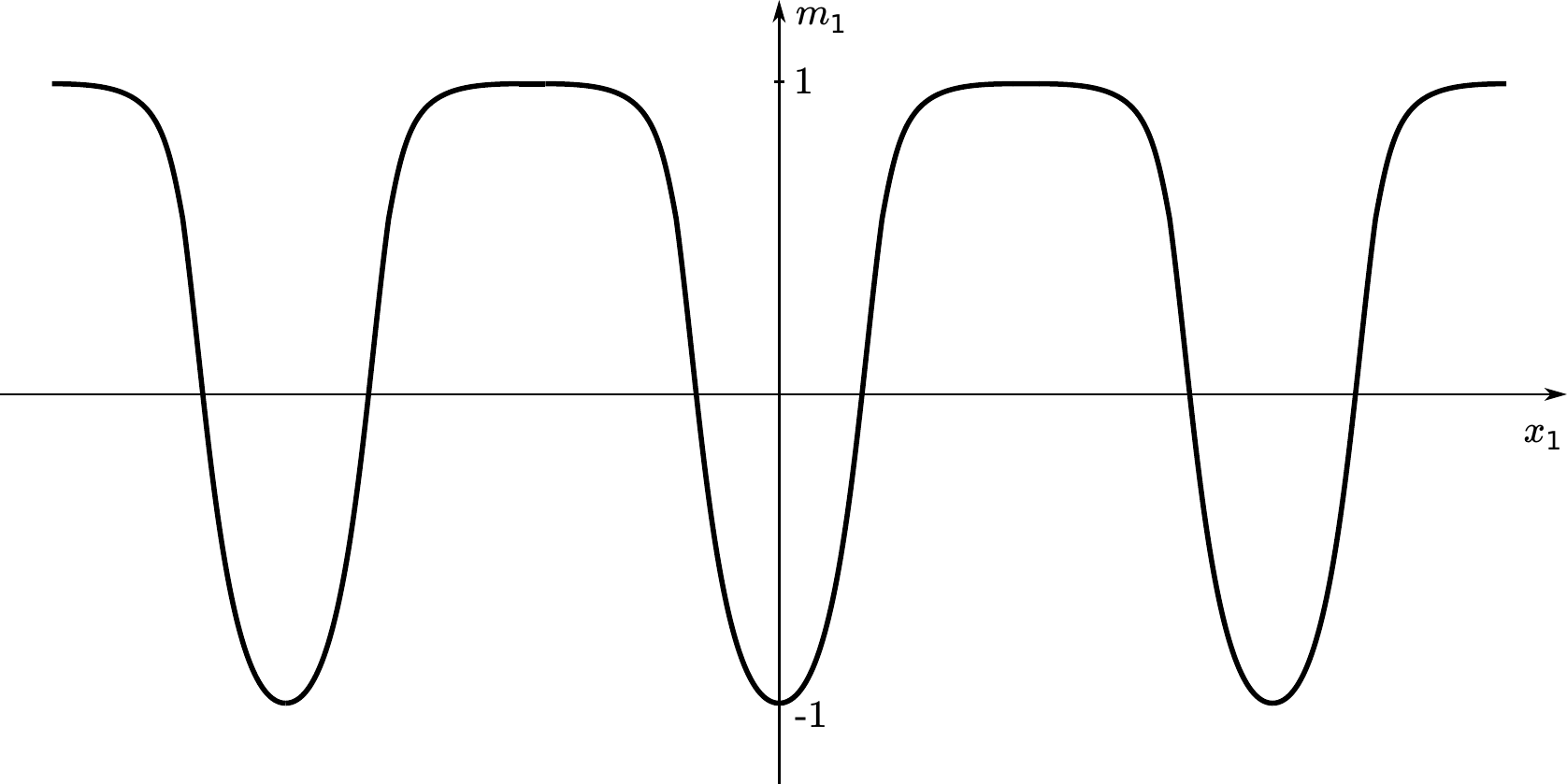}
\caption{For $h=1$, an array of N\'eel walls of total winding number $3$, represented in terms of $m_1$.}
\label{fig:h=1}
\end{figure}
\begin{figure}[htb!]
\centering
 \begin{minipage}{0.33\linewidth}
 \includegraphics[width=\textwidth]{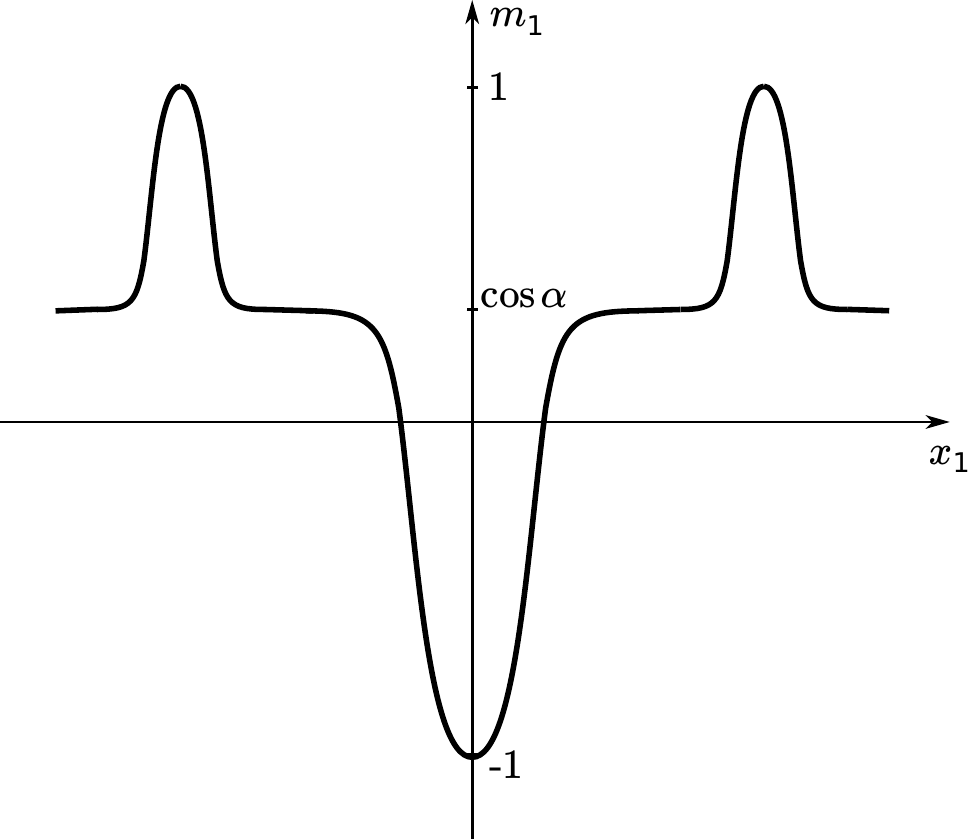}
 \end{minipage}
 \hspace{1cm}
 \begin{minipage}{0.58\linewidth}
 \includegraphics[width=\textwidth]{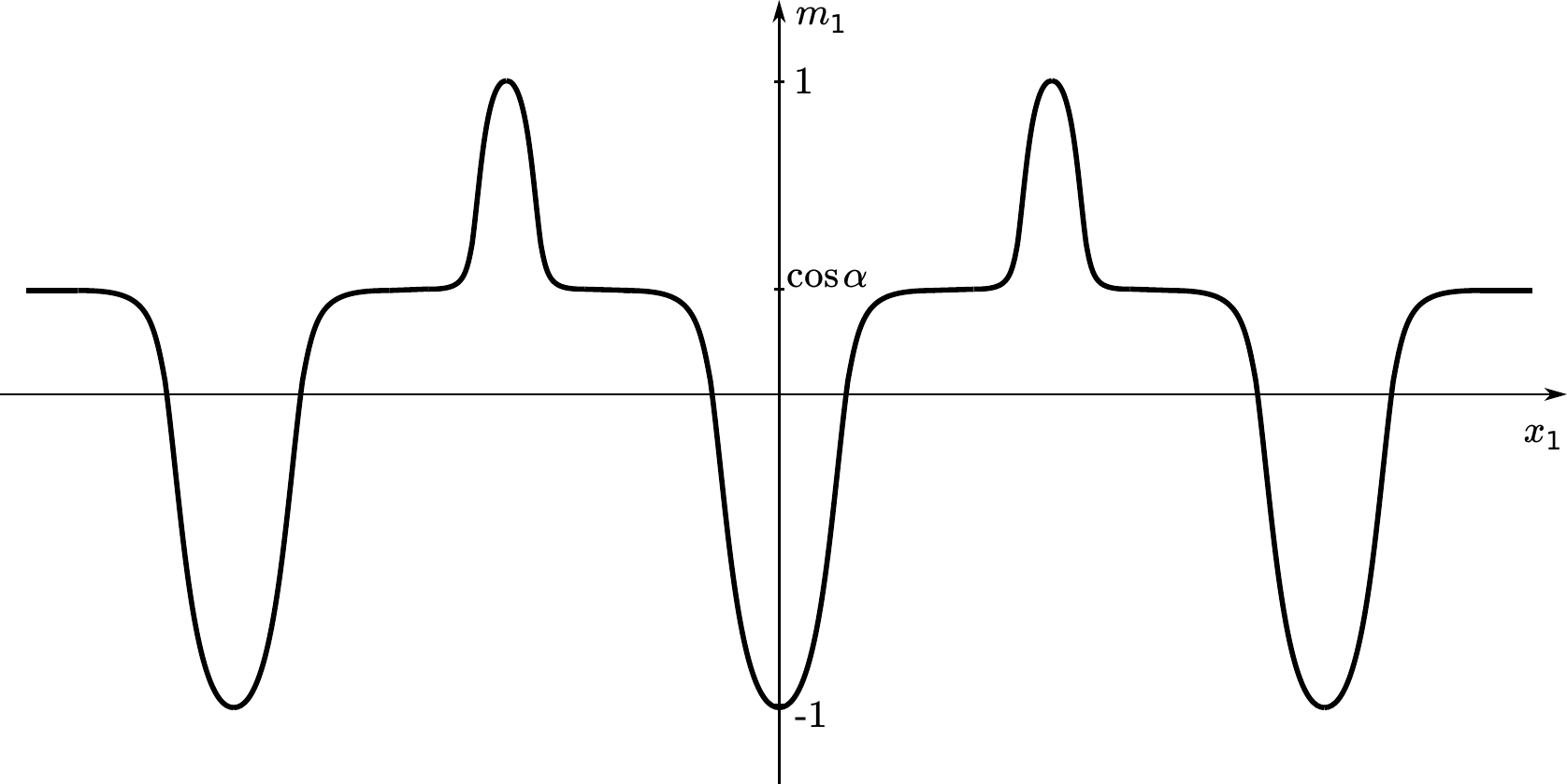}
 \end{minipage}
\caption{For $h < 1$, a hypothetical array of N\'eel walls of total winding number $1 + \alpha/\pi$ (left) and an existing one of winding number $3 - \alpha/\pi$ (right).}
  \label{fig:h<1}
\end{figure}

Clearly, there is a minimiser in $\A_h(0)$, which is constant.
Moreover, it suffices to study the question for $d \ge 0$, as the case $d < 0$
can be reduced to this one by reversing the orientation. It is well-known that a minimiser
in $\A_h(\alpha/\pi)$ exists \cite{Me1, Chermisi-Muratov:13}, and similar arguments apply to $\A_h(1 - \alpha/\pi)$
as well (see \cite{Ignat-Moser:17} for the details).
We obtained further, but still partial results on the existence of minimisers in our previous paper \cite{Ignat-Moser:17}.
Namely, there exist no minimisers in $\A_h(1)$, but if $\alpha > 0$ is sufficiently small, then
there exists a minimiser in $\A_h(2 - \alpha/\pi)$. In this paper, we
completely settle the question
under the assumption that $\alpha$ is small enough (i.e., that $h$ is sufficiently close to $1$). We also have
some information about the structure of the minimisers.

\subsection{Main results}
Our main results are as follows.

\begin{theorem}[Existence of composite N\'eel walls] \label{thm:existence}
Given $\ell \in \N = \{1,2, \dots\}$, there exists $H_\ell^- \in (0, 1)$ such that $E_h$ attains
its infimum in $\A_h(\ell - \alpha/\pi)$ for all $h \in (H_\ell^-, 1]$.
\end{theorem}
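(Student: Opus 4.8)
The plan is to use the direct method of the calculus of variations combined with concentration-compactness, following the strategy that is by now standard for this kind of functional but adapted carefully to the topological constraint. Take a minimising sequence $m^{(k)} \in \A_h(\ell - \alpha/\pi)$, so that $E_h(m^{(k)}) \to \E_h(\ell - \alpha/\pi)$. From the energy bound we get uniform control of $\|(m^{(k)})'\|_{L^2}$, of $\|m_1^{(k)} - h\|_{\dot H^{1/2}}$, and of $\|m_1^{(k)} - h\|_{L^2}$; passing to a subsequence we obtain a weak limit $m$ with $E_h(m) \le \E_h(\ell - \alpha/\pi)$ by weak lower semicontinuity of each term (the $\dot H^{1/2}$ seminorm is weakly lower semicontinuous, e.g.\ via the double-integral representation \eqref{eqn:double_integral} and Fatou). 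The entire difficulty is that the constraint $\deg(m^{(k)}) = \ell - \alpha/\pi$ may not survive the limit: mass, and with it winding, can escape to $\pm\infty$ or split into bubbles that drift apart. So the real content is an energy-comparison argument showing that, for $h$ close enough to $1$, such splitting is strictly disadvantageous.

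First I would set up the dichotomy. After normalising translations (fixing, say, the "centre of transition" of $m^{(k)}$ at the origin in a suitable sense), concentration-compactness for the one-dimensional energy density gives three alternatives: compactness (the sequence is tight, and then $m$ is a genuine competitor with the right degree, and we are done), vanishing (excluded because a configuration of nonzero winding must spend a definite amount of energy — the anisotropy plus exchange terms force $\int W(m) \,dx_1 \gtrsim 1$, so vanishing would contradict $\ell \ge 1$), or dichotomy, in which the sequence splits into pieces carrying degrees $d_1, d_2$ with $d_1 + d_2 = \ell - \alpha/\pi$ and $d_1, d_2 \in \Z \pm \{0, \alpha/\pi\}$, and $\E_h(\ell - \alpha/\pi) \ge \E_h(d_1) + \E_h(d_2)$. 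To rule this out I need the strict subadditivity inequality $\E_h(d_1) + \E_h(d_2) > \E_h(d_1 + d_2)$ for all admissible nontrivial splittings of $\ell - \alpha/\pi$. This is where the hypothesis that $\alpha$ is small enters decisively: as $\alpha \to 0$ (i.e.\ $h \to 1$), the cost of a "small" transition of angle $2\alpha$ tends to $0$, whereas the cost $\E_h(1) = \E_h$ of a unit-winding piece stays bounded below; more precisely, I would establish the asymptotics $\E_h(n \pm \alpha/\pi) = n\,c_\infty + O(\alpha^{?})$ for a universal constant $c_\infty > 0$ (the energy of a single $2\pi$-wall) together with a genuine attractive interaction correction that is strictly negative, so that merging two arrays is cheaper than keeping them apart. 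The quantitative interaction estimate — showing the $O$-term is not merely nonpositive but strictly beats the cost of the would-be split — is the main obstacle, and it is exactly the place where the quartic behaviour of $W$ near $(1,0)$ (absent from \cite{Ignat-Moser:17}) must be handled by hand rather than quoted.

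Concretely, for strict subadditivity I would argue by an explicit gluing construction: given near-optimal profiles for $\A_h(d_1)$ and $\A_h(d_2)$, concatenate them with a large but finite separation $L$, smoothing at the junction, to produce a competitor in $\A_h(d_1 + d_2)$; the exchange and anisotropy contributions add up with error $o(1)$ as $L \to \infty$, while the nonlocal term \emph{gains} a negative cross-interaction of order $-\,\mathrm{const}/L$ times the product of the "charges" $\int (m_1^{(1)} - h)$ and $\int (m_1^{(2)} - h)$, which are nonzero for any transition. Optimising in $L$ yields $\E_h(d_1 + d_2) < \E_h(d_1) + \E_h(d_2)$ — but only once we know the charges are nonvanishing and the leading-order energies $\E_h(d_i)$ are not too large relative to this gain, which is what forces $h > H_\ell^-$. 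I expect $H_\ell^-$ to be defined implicitly by this competition and to depend on $\ell$ through the number of ways $\ell - \alpha/\pi$ can split. Once dichotomy is excluded, compactness holds, the weak limit $m$ lies in $\A_h(\ell - \alpha/\pi)$, lower semicontinuity gives $E_h(m) \le \E_h(\ell - \alpha/\pi)$, hence equality, and $m$ is the desired minimiser.
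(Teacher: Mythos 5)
Your high-level framework — minimising sequence, lower semicontinuity, concentration-compactness, exclusion of vanishing by the energy lower bound for nonzero degree, and exclusion of dichotomy by strict subadditivity — matches the paper's Theorem~\ref{thm:concentration-compactness} and the overall strategy. But the heart of the matter, establishing that every nontrivial partition of $\ell - \alpha/\pi$ strictly raises the energy, is where your proposal has a genuine gap.

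You claim the cross-interaction in the nonlocal term ``gains a negative cross-interaction of order $-\,\mathrm{const}/L$ times the product of the charges $\int(m_1^{(1)}-h)$ and $\int(m_1^{(2)}-h)$, which are nonzero for any transition.'' Nonvanishing of the charges is not enough: the sign of the product decides attraction versus repulsion, and for some winding numbers (as the paper's Theorem~\ref{thm:non-existence} for $\ell + \alpha/\pi$ shows) the effect goes the wrong way and minimisers do \emph{not} exist. What actually makes the argument work for $d=\ell-\alpha/\pi$ is the ``sandwich'' structure: the two outermost pieces in any partition necessarily have degree in $\N - \{0,\alpha/\pi\}$ and therefore execute a full transit through $\{z_1 \le 0\}$, which gives $\|(m_1^{(j)}-h)_-\|_{L^1} \ge c_0 > 0$ for a constant $c_0$ independent of $\alpha$, while the positive part satisfies $\|(m_1-h)_+\|_{L^1} \le C\alpha^{2/11}$. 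The first bound is elementary; the second is the content of Proposition~\ref{prop:L^1-estimate}, which in turn rests on the decay machinery of Lemmas~\ref{lem:preliminary_decay_estimate}--\ref{lem:decay_estimate}. Your proposal never controls the relative size of the positive and negative parts, so it cannot conclude that the net interaction between outermost pieces is attractive and dominant. Your alternative route via an asymptotic expansion $\E_h(n\pm\alpha/\pi) = n c_\infty + O(\alpha^{?})$ would, even if correct, still require a lower-order term with a definite sign and magnitude, which is precisely the missing quantitative input; the paper avoids this expansion entirely by working directly with the $L^1$ bounds and Lemma~\ref{lem:H^{1/2}}.

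Two further points. First, the cross-interaction gain from Lemma~\ref{lem:H^{1/2}} is of order $1/R^2$, not $1/R$, so to make the gluing competitive one must localise the pieces with an energy error that is $o(R^{-2})$; the paper obtains $O(R^{-2-\beta})$ via Corollary~\ref{cor:local}, and the new Lemma~\ref{lem:localisation} is needed for $h = 1$ because the quartic growth of $W$ at the well makes the older $h<1$ estimates fail. Your ``$o(1)$ as $L\to\infty$'' is too coarse for this competition. Second, your dichotomy step implicitly assumes the pieces $d_j$ carry minimisers of their own; this is not automatic and the paper repairs it via Proposition~\ref{prop:inequality_repulsion}, an induction over $D_h$ that refines any partition until every $\E_h(d_j)$ is attained.
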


In the case $h = 1$, we note that $\ell - \alpha/\pi = \ell$. Thus the following corollary is a
special case of Theorem \ref{thm:existence}. We state it separately, because it highlights how the result fits in
with a result for $h > 1$ in our previous paper \cite{Ignat-Moser:17}. (The case $h = 1$ was not
studied in \cite{Ignat-Moser:17}, so this is new information that complements the previous results.)

\begin{corollary}
\label{cor:1}
The functional $E_1$ attains its infimum in $\A_1(\ell)$ for every $\ell \in \Z$.
\end{corollary}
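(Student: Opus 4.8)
The plan is to deduce Corollary \ref{cor:1} directly from Theorem \ref{thm:existence}, together with two elementary observations. First, I would note that for $h = 1$ we have $\alpha = \arccos 1 = 0$, so $\ell - \alpha/\pi = \ell$, the admissible class $\A_1(\ell - \alpha/\pi)$ is literally $\A_1(\ell)$, and the set of attainable winding numbers $\Z \pm \{0, \alpha/\pi\}$ reduces to $\Z$. Since Theorem \ref{thm:existence} produces a threshold $H_\ell^- \in (0,1)$ and guarantees that $E_h$ attains its infimum in $\A_h(\ell - \alpha/\pi)$ for every $h \in (H_\ell^-, 1]$ — an interval that is closed at the right endpoint and hence contains $h = 1$ — it follows at once that $E_1$ attains its infimum in $\A_1(\ell)$ for each $\ell \in \N$.

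Second, I would dispose of the remaining cases $\ell \le 0$. For $\ell = 0$, the constant map $m \equiv (1, 0)$ belongs to $\A_1(0)$ and has $E_1(m) = 0$, which is the minimum since $E_1 \ge 0$. For $\ell < 0$, I would exploit the orientation-reversing symmetry $m \mapsto \tilde m := (m_1, -m_2)$. This leaves $E_1$ unchanged: the exchange term is invariant because $|\tilde m'| = |m'|$ pointwise, while the stray field and anisotropy terms depend only on $m_1 - h$, which is unaffected. On the other hand, if $\phi$ is a continuous lifting of $m$ then $-\phi$ is one of $\tilde m$, so $\deg(\tilde m) = -\deg(m)$. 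Therefore $\E_1(-\ell) = \E_1(\ell)$, and a minimiser in $\A_1(\ell)$ for $\ell > 0$ is carried by this reflection to a minimiser in $\A_1(-\ell)$.

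Combining the three cases gives the corollary for all $\ell \in \Z$. I do not expect any genuine obstacle here: the entire substance lies in Theorem \ref{thm:existence}, and the only points requiring attention are the bookkeeping with $\alpha = 0$, the fact that $h = 1$ is included in the theorem's range $(H_\ell^-, 1]$, and the routine invariance of each of the three energy terms under $m \mapsto (m_1, -m_2)$.
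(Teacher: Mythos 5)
Your argument is correct and coincides with the paper's: Corollary \ref{cor:1} is stated as a special case of Theorem \ref{thm:existence} because $h = 1$ lies in $(H_\ell^-, 1]$ and $\alpha = 0$ gives $\ell - \alpha/\pi = \ell$, while $\ell = 0$ and $\ell < 0$ are handled by the constant map and the orientation-reversing symmetry, respectively, exactly as the paper notes in its introduction.
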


\begin{theorem}[Non-existence] \label{thm:non-existence}
Given $\ell \in \N$, there exist $H_\ell^0, H_\ell^+ \in (0, 1)$ such that $E_h$ does \emph{not} attain
its infimum in $\A_h(\ell)$ for all $h \in (H_\ell^0, 1)$ and does \emph{not} attain its infimum
in $\A_h(\ell + \alpha/\pi)$ for all $h \in (H_\ell^+, 1)$.
\end{theorem}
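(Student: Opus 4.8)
The plan is to rule out minimisers by a concentration-compactness dichotomy argument combined with sharp energy comparisons. The key quantities are the minimal energies $\E_h(d)$ for the relevant degrees; the strategy is to show that for $d = \ell$ (resp. $d = \ell + \alpha/\pi$), one has the \emph{strict inequality}
\[
\E_h(\ell) = \E_h(1) + \E_h(\ell - 1), \qquad \E_h(\ell + \alpha/\pi) = \E_h(1) + \E_h(\ell - 1 + \alpha/\pi),
\]
together with the fact that $\E_h(1)$ is \emph{not attained} (this last fact for $d=1$ is exactly the non-existence result from \cite{Ignat-Moser:17}, which I would invoke). Once such a splitting identity holds with the right-hand pieces behaving additively, any minimising sequence must split into well-separated bumps, one of which carries degree $1$ and hence cannot converge; non-attainment follows. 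So the real content is proving the additivity (``no binding'') of energies in the regime $h$ close to $1$, i.e. $\alpha$ small.

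First I would establish the general sub-additivity $\E_h(d_1 + d_2) \le \E_h(d_1) + \E_h(d_2)$ by a gluing/translation construction: take near-optimal profiles for $d_1$ and $d_2$, translate them far apart, and interpolate; the nonlocal $\dot H^{1/2}$ interaction term decays in the separation distance, and the local terms are controlled because each profile is close to its limiting well outside a compact core. This is the routine direction. Next, and more delicately, I would run the concentration-compactness principle (in the spirit of Lions, adapted to the $\dot H^{1/2}$ + local structure as in \cite{Ignat-Moser:17}) on a minimising sequence in $\A_h(\ell)$: vanishing is excluded because the degree is a nonzero topological constraint forcing a definite amount of energy to be spent; in the compactness case one obtains a minimiser, which is what we want to exclude; so the argument proceeds by showing that in the parameter range $h \in (H_\ell^0, 1)$ the dichotomy case is \emph{forced}, and that the split-off pieces rearrange so that a degree-$1$ component appears. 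The mechanism for forcing dichotomy is a strict inequality of the form $\E_h(\ell) < \E_h(\ell - \varepsilon_0)$ is \emph{false} for every admissible intermediate degree except the splitting $1 + (\ell-1)$ — concretely, one shows $\E_h(\ell)$ equals the sum over any decomposition into ``elementary'' transition angles, and that the cheapest decomposition necessarily isolates a full $2\pi$-rotation (a degree-$1$ piece) because, for $\alpha$ small, a $2\pi$ transition is strictly cheaper than the competing $2\alpha + 2(\pi-\alpha)$-type splittings in the relevant combinatorial configuration.

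The main obstacle is this last sharp energy comparison: quantifying, uniformly as $\alpha \to 0$, that an optimal array of total winding $\ell$ ``wants'' to contain a degree-$1$ sub-block whose energy is exactly $\E_h(1)$ and which is not attained, rather than redistributing the winding into attained pieces of fractional degree. I would handle it by expanding $\E_h(d)$ for small $\alpha$: write $\E_h(\ell - \alpha/\pi)$, $\E_h(\ell)$, $\E_h(\ell + \alpha/\pi)$ as $\ell$ times a base cost (the cost of one $2\pi$-rotation, which is the $h=1$ value) plus $\alpha$-corrections coming from the tails, using the logarithmically decaying tail asymptotics of N\'eel walls recalled in Section \ref{sec:neel}; the sign and size of the $O(\alpha)$ (or $O(\alpha \log \alpha)$) correction then decides attainment. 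The $h=1$ corollary (Corollary \ref{cor:1}) supplies the leading term and the non-attainment seed for $d=1$. Finally, for the $\ell + \alpha/\pi$ case one also uses that $\E_h(\alpha/\pi)$ \emph{is} attained (a N\'eel wall of angle $2\alpha$) so the ``loose'' degree cannot hide in that sub-block, again pushing a degree-$1$ piece to the boundary at infinity; choosing $H_\ell^0, H_\ell^+$ close enough to $1$ makes all the strict inequalities hold, completing the argument.
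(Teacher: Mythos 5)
The central difficulty with your approach is that the concentration-compactness dichotomy is not a mechanism that can \emph{prove} non-existence. Even if you established the splitting identity $\E_h(\ell) = \E_h(1) + \E_h(\ell - 1)$ (which is not proved in the paper and is not a consequence of subadditivity alone --- only the inequality $\le$ is), and even knowing that $\E_h(1)$ is not attained, it would not follow that $\E_h(\ell)$ is not attained: a minimising sequence in $\A_h(\ell)$ could perfectly well converge, up to translation, to a single bound profile that never splits. In Lions' framework, dichotomy is an alternative that one \emph{rules out} (via a strict binding inequality) in order to conclude existence; showing that some splitting is cost-neutral does not force every minimising sequence to split, so it yields no contradiction with attainment. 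You need a direct argument that a putative minimiser itself has too much energy, which is what the paper provides.

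The paper's proof proceeds by contradiction from the assumption that a minimiser $m \in \A_h(d)$ exists (with $d = \ell$ or $d = \ell + \alpha/\pi$), and decomposes $m$ not into a degree-$1$ and a degree-$(\ell-1)$ piece, but into $m^+$ of degree $\alpha/\pi$ (carrying only the initial small swing of $m_1$ above $h$, before the first crossing of $m_1 = -1$) and $m^-$ of degree $d - \alpha/\pi$, so that $E_h(m) = E_h(m^+) + E_h(m^-) + \scp{m_1^+}{m_1^-}_{\dot{H}^{1/2}(\R)}$. The split is engineered so that $m_1^+ - h \ge 0$ has one sign and, by Proposition \ref{prop:L^1-estimate}, small $L^1$-mass of order $\alpha^{2/11}$, while a definite negative mass of $m_1^- - h$ sits immediately to its right (on the interval $(b,c)$, where $\int_b^c (h - m_1)\,dx_1 \ge C_2 > 0$ by a H\"older bound). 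This makes the cross term strictly positive for $\alpha$ small, giving $\E_h(d) > \E_h(\alpha/\pi) + \E_h(d - \alpha/\pi)$, which directly contradicts subadditivity (Remark \ref{rem:degree}). Your proposed degree-$1$ versus degree-$(\ell-1)$ split lacks this sign structure --- a degree-$1$ piece necessarily carries $m_1 - h$ of both signs with comparable mass --- so the cross-term estimate would not go through, and the non-attainment of $\E_h(1)$ from the earlier paper is not actually used.
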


The statements of Theorems \ref{thm:existence} and \ref{thm:non-existence}
were conjectured in our previous paper \cite{Ignat-Moser:17}, along with the conjecture that minimisers in
$\A_h(\ell - \alpha/\pi)$ do \emph{not} exist for $\ell \ge 2$ if $h$ is too small, and that the
non-existence in $\A_h(\ell)$ and in $\A_h(\ell + \alpha/\pi)$ holds for all $h \in (0, 1)$. Heuristic arguments were
provided to back up the conjectures. They rely on a decomposition of $m_1 - h$ into its positive
and negative parts and a further decomposition into pieces that correspond to individual transitions
between $(\cos \alpha, \pm \sin \alpha)$. The key observation is that the stray field energy
(the nonlocal term in the functional) will become smaller if two pieces of the same sign approach
each other or two pieces of opposite signs move away from each other. We may interpret this as
attraction between pieces of the same sign and repulsion between pieces of opposite signs.
If $1 - h$ is small, then we also expect that the positive pieces will be much smaller than the
negative pieces. Thus for winding number $\ell - \alpha/\pi$ for $\ell \in \N$ (as on the
right of Figure \ref{fig:h<1}), the whole profile will be sandwiched between the outermost pieces, which strongly
attract each other. In contrast, for the winding number $\ell + \alpha/\pi$ (as on the left of
Figure \ref{fig:h<1}), the outermost
pieces will experience a net repulsion, and moving these pieces towards $\pm \infty$ will
reduce the energy.

We summarise our results and previously known results graphically in Figure \ref{fig:existence},
alongside some conjectures from our previous paper \cite{Ignat-Moser:17}.
These conjectures would, if proved correct, complete the picture about the existence and
non-existence of minimisers of $E_h$ subject
to a prescribed winding number, except that the best values of $H_\ell^-$, $H_\ell^0$, and $H_\ell^+$
in Theorems \ref{thm:existence} and \ref{thm:non-existence} are still unknown.
(Figure \ref{fig:existence} might suggest that they are increasing in $\ell$, but no such
statement is intended and their behaviour is unknown.)
\begin{figure}[htb!]
\centering
 \includegraphics[width=.9\linewidth]{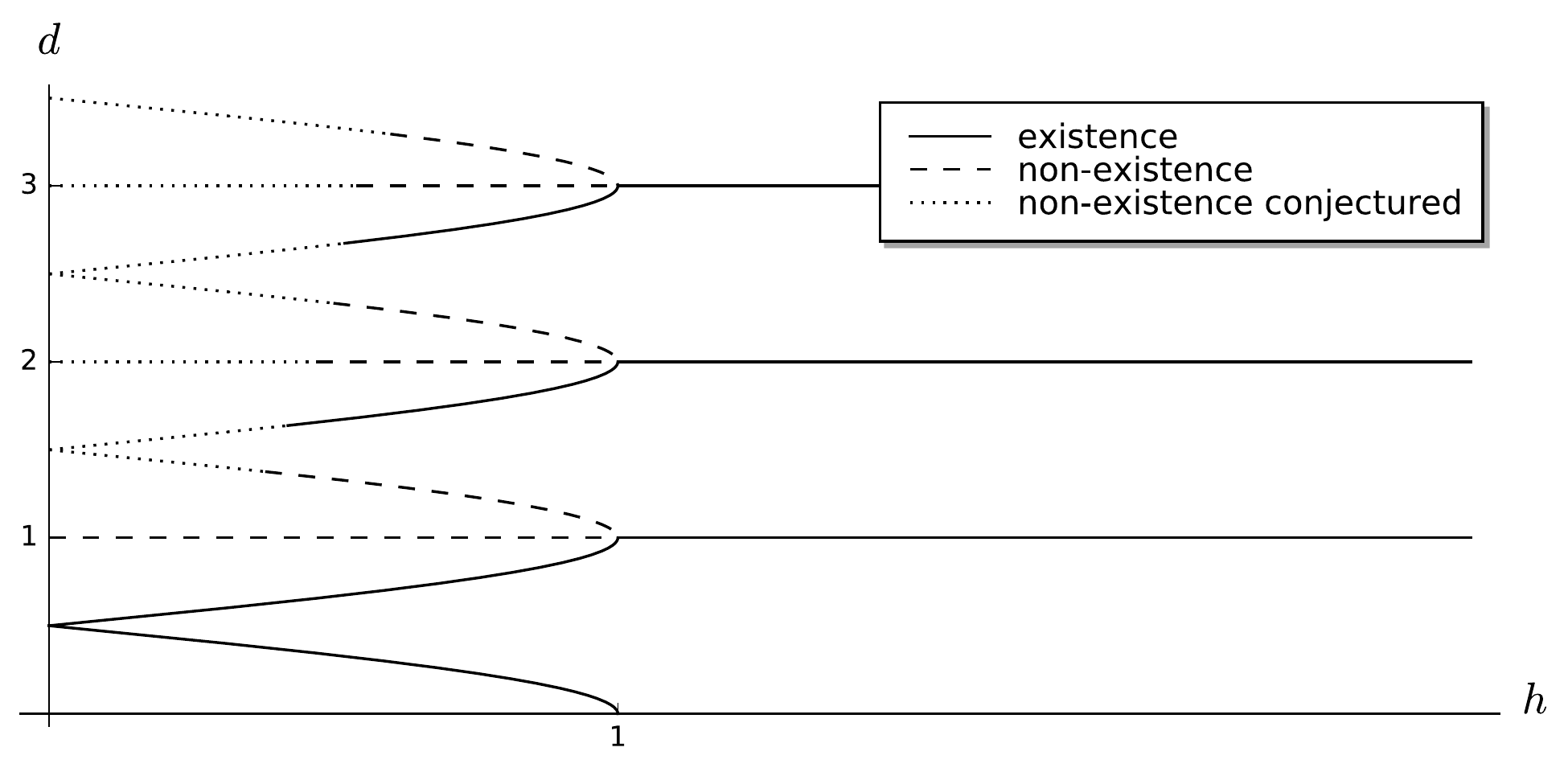}
\caption{A schematic representation of existence and non-existence results and further conjectures.
The position of any changeover between solid/dashed and dotted lines is not accurate.}
\label{fig:existence}
\end{figure}

For the proofs of Theorems \ref{thm:existence} and \ref{thm:non-existence}, we need to quantify the above heuristic arguments precisely.
Moreover, we need to estimate any effects coming from the
other (local) terms in the energy functional as well, so that we can show that the above effects (coming from the nonlocal term)
really do dominate the behaviour. In our previous paper \cite{Ignat-Moser:17}, we used
a linearisation of the Euler-Lagrange equation for minimisers of $E_h$ as one of our
principal tools. This approach is based on ideas of Chermisi-Muratov \cite{Chermisi-Muratov:13}.
It has the disadvantage, however, that it requires the quadratic growth of $W$ near
the wells that we have observed for $h < 1$ but not for $h = 1$. There are further complications
of a technical nature, and as a result, the method gives good estimates (in the case $h < 1$)
near the tails of a profile, but not between two N\'eel walls.
This in turn restricts the analysis to small winding numbers.
In this paper, we replace this tool with different arguments of variational nature.
Our new estimates are more robust; in particular they do not have the restrictions described.

In the next step, we use the estimates to show that
for certain profiles of $m$, splitting them into several parts of lower winding number will always
increase the energy. In the concentration-compactness framework of Lions \cite{Lions:84.1}, this
implies that no dichotomy will occur for minimising sequences. Here the strategy for the proof of Theorem \ref{thm:existence} is similar to our previous paper \cite{Ignat-Moser:17}.
For the non-existence, we
have exactly the opposite:
profiles of certain winding numbers can always be split into several parts in a way
that decreases the energy.

In a forthcoming paper \cite{Ignat-Moser:18} (extending previous work \cite{Ignat-Moser:16}),
we also study the asymptotic behaviour of a version of
the problem where the exchange energy is weighted with a parameter $\epsilon$ that tends to $0$.
The above heuristics are consistent with the observation that in this situation, too, the
N\'eel walls stay away from each other when $\alpha\in (0, \frac\pi 2]$ is sufficiently large. But for the
asymptotic problem, we can give the precise angle where the change in behaviour occurs.

In addition to existence, we can give some information about the structure of the
minimisers from Theorem \ref{thm:existence}.\footnote{A similar result holds for the case $h=1$, see Lemma \ref{lem:no_unaccounted_walls} below.}

\begin{theorem}[Structure] \label{thm:structure}
Given $\ell \in \N$, there exists $H_\ell \in (0, 1)$ such that the following
holds true for all $h \in (H_\ell, 1)$: if $m \in \A_h(\ell - \alpha/\pi)$ is a
minimiser of $E_h$ in $\A_h(\ell - \alpha/\pi)$, then there exist
$a_1, \dotsc, a_{2\ell - 1}, b_1, \dotsc, b_{2\ell - 2} \in \R$ with
\[
a_1 < b_1 < a_2 < \dotsb < a_{2\ell - 2} < b_{2\ell - 2} < a_{2\ell - 1}
\]
such that $m_1(a_n) = (-1)^n$ for $n = 1, \dotsc, 2\ell - 1$ and
$m_1 \le h$ in $(-\infty, b_1] \cup [b_2, b_3] \cup \dotsb \cup [b_{2\ell - 2}, \infty)$
and $m_1 \ge h$ in $[b_1, b_2] \cup [b_3, b_4] \cup \dotsb \cup [b_{2\ell - 3}, b_{2\ell - 2}]$.
\end{theorem}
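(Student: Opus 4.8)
The plan is to reduce the statement to two quantitative facts about a minimiser $m\in\A_h(\ell-\alpha/\pi)$: that $m_1-h$ changes sign at most $2\ell-2$ times, and that on each interval of constant sign of $m_1-h$ the lifting $\phi$ of $m$ performs exactly the half‑turn forced by the degree, so that $m$ runs through $(-1,0)$ and $(1,0)$ in the claimed alternating fashion. Everything except the upper bound $2\ell-2$ on the number of sign changes is elementary; the upper bound, which I would prove by a deletion/comparison argument, is where the nonlocal term makes the analysis subtle.

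First I would set up. By Theorem~\ref{thm:existence} (taking $H_\ell\ge H_\ell^-$, to be enlarged below) a minimiser $m$ exists. Regularity theory for its Euler--Lagrange equation, together with the asymptotic analysis near the wells from the single‑wall theory (see \cite{Me1,Chermisi-Muratov:13,Ignat-Moser:17}), gives that $m$ is smooth, that $m_1-h$ tends to $0$ at $\pm\infty$ with a definite negative sign, and that $m_1-h$ vanishes only finitely often. Normalising the lifting so that $\phi(-\infty)=\alpha$, the degree condition forces $\phi(+\infty)=2\pi\ell-\alpha$ (for $\alpha\in(0,\pi/2)$ no other value is compatible). Let $b_1<\dots<b_J$ be the sign changes of $m_1-h$; since the tails are negative, $J=2I$ is even and $m_1-h$ has the pattern $-,+,-,\dots,+,-$ on the $2I+1$ intervals they determine (call these the \emph{blocks}), the two unbounded ones negative. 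As $\phi$ is continuous and runs from $\alpha$ to $2\pi\ell-\alpha$, the intermediate value theorem shows that, letting $\zeta_k$ be the first time $\phi$ attains $(2k-1)\pi$ and $\xi_k$ the first time it attains $2\pi k$, one has $\zeta_1<\xi_1<\zeta_2<\xi_2<\dots<\xi_{\ell-1}<\zeta_\ell$, with $m_1(\zeta_k)=-1$ and $m_1(\xi_k)=1$. Consecutive points of this list lie in blocks of opposite sign, so there are at least $\ell$ negative and at least $\ell-1$ positive blocks, i.e.\ $I\ge\ell-1$; and if $I=\ell-1$, the blocks are precisely $N_1,P_1,N_2,\dots,P_{\ell-1},N_\ell$, the $\zeta_k$ necessarily fall into $N_k$ and the $\xi_k$ into $P_k$, so that $a_{2k-1}=\zeta_k$, $a_{2k}=\xi_k$ and $b_1,\dots,b_{2\ell-2}$ give the assertion.

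So the whole matter reduces to showing $I\le\ell-1$. Suppose $I\ge\ell$. On each block the sign condition confines $\phi$ to a single connected component (a ``band'') of $\{\theta\in\R:\cos\theta\le\cos\alpha\}$ or of $\{\theta\in\R:\cos\theta\ge\cos\alpha\}$, and across each $b_i$ the band passes to an adjacent one; hence the sequence of bands traversed is a walk on a path graph, from the band $[\alpha,2\pi-\alpha]$ to the band $[2\pi(\ell-1)+\alpha,2\pi\ell-\alpha]$, which lie at distance $2(\ell-1)$. Its length is $2I>2(\ell-1)$, so the walk is not monotone and makes an immediate backtrack; this singles out one block $[s,t]$ with $\phi(s)=\phi(t)$, on which $m_1-h$ has one sign, flanked on both sides by blocks of the opposite sign. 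I would then delete it: set $\tilde m(x)=m(x)$ for $x\le s$ and $\tilde m(x)=m(x+(t-s))$ for $x>s$. Since $\phi(s)=\phi(t)$, $\tilde m\in H^1_\loc(\R;\Ss^1)$; since the deleted block has zero winding, $\deg\tilde m=\deg m$; and the exchange and anisotropy parts of the energy strictly decrease. The point is that the stray‑field term does not increase by more. Decomposing $\|m_1-h\|^2_{\dot{H}^{1/2}(\R)}$ via \eqref{eqn:double_integral} into the self‑energy of the deleted block, its interaction with the remainder, and the change of interaction among the remaining pieces produced by the gluing, I would invoke the quantitative version of the heuristic that equally‑signed pieces attract and oppositely‑signed pieces repel: the deleted block, of one sign and surrounded by regions of the opposite sign, interacts repulsively with its surroundings, so removing it lowers this contribution; the only possibly adverse term is the interaction with far‑away pieces of the same sign as the deleted block, and for $h$ sufficiently close to $1$ — this is how $H_\ell$ enters — the positive blocks are uniformly small in amplitude, so that term is dominated. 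This makes $E_h(\tilde m)<E_h(m)$, contradicting minimality; hence $I=\ell-1$, and the theorem follows.

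The hardest part is exactly this nonlocal bookkeeping in the deletion step: turning the attraction/repulsion heuristic into a precise estimate, uniform as $h\to1$, strong enough to beat the possibly tiny gain from the local terms when the removed detour is a thin positive bump. These are the same kind of estimates required for Theorems~\ref{thm:existence} and \ref{thm:non-existence}, so I would expect the structure theorem to be obtained as a by‑product of that machinery rather than proved from scratch.
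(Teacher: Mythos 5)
Your reduction to showing that $m_1-h$ changes sign at most $2\ell-2$ times is correct, and the band-walk picture with an immediate backtrack is a nice way to isolate a candidate block to remove. But the deletion step has a genuine gap. You argue that after removing the backtrack block, the only adverse contribution to the nonlocal energy is its interaction with far-away pieces of the \emph{same} sign, and that this is dominated because ``the positive blocks are uniformly small in amplitude.'' This only works if the backtrack block is positive. A priori, an immediate backtrack of the band-walk can land in a \emph{negative} band, flanked by two positive blocks; then the deleted piece and the far-away pieces of the same sign are all of the \emph{negative} parts, which are of order one and not small as $h\to1$, so the dominance you invoke fails. (One can in fact show a positive backtrack always exists, using the constraint from Lemma~\ref{lem:no_unaccounted_walls} that $m_1$ hits $\pm1$ at \emph{exactly} $2\ell-1$ points --- each through-block contributes at least one crossing, so there cannot be too many through-blocks, and a parity count of the run lengths forces an even-band backtrack --- but you neither make this argument nor notice the issue.)

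Even in the positive-backtrack case there is a second missing ingredient: the ``good'' repulsion between the deleted positive block and its two negative neighbours must be bounded below \emph{uniformly as $h\to1$}, otherwise it cannot absorb the adverse terms. That lower bound is only available once one knows that the whole profile has bounded width, i.e.\ that the distance between the outermost points where $m_1=\pm1$ stays bounded as $h\to1$. The paper establishes this separately (Proposition~\ref{prop:profile_width}, via upper semicontinuity of $h\mapsto\E_h(\ell-\alpha/\pi)$ together with the strict subadditivity that drives the existence theorem), and it enters the structure proof crucially when bounding $\int(\hat m_1^-(t)-h)(s-t)^{-2}\,dt$ away from zero. Your sketch never addresses it. Finally, note that the paper's comparison configuration is different from yours: it flattens the spurious positive bump to the constant $h$ (so $\hat m=m$ off $(c_n,c_n')$ and $\hat m\equiv m(c_n)$ on it), rather than excising an interval and translating the right tail. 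The flatten comparison yields the exact identity $E_h(m)=E_h(\hat m)+E_h(\tilde m)+\langle\hat m_1,\tilde m_1\rangle_{\dot H^{1/2}}$, and minimality reduces the contradiction to a pointwise sign statement; your translation competitor also perturbs all cross-interactions among the \emph{remaining} pieces (bringing both same-sign and opposite-sign pairs closer), a bookkeeping problem that your sketch does not confront. You also use, without proof, that $m_1\le h$ near $\pm\infty$; this is Lemma~\ref{lem:behaviour_at_tails}, valid only for $h$ close to $1$, and is itself proved by a sign-decomposition comparison of the kind your deletion step is aiming at.
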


This means that the picture on the right of Figure \ref{fig:h<1} is qualitatively accurate. The result is
also consistent with the idea that we should think of these minimisers as a composition of several
N\'eel walls in a row.

It is an open question whether the minimisers of $E_h$ in $\A_h(\ell - \alpha/\pi)$ (in the 
cases where they exist) have a monotone phase. That is, if $m = (\cos \phi, \sin \phi)$ is
such a minimiser, does it follow that $\phi' \ge 0$ (or even $\phi' > 0$)?
The answer is known only for the simplest cases of a transition of degree $\pm \alpha/\pi$ or
$\pm (1 - \alpha/\pi)$, where a standard symmetrisation argument applies (see \cite{Chermisi-Muratov:13, Me1}).
For a higher degree $\ell - \alpha/\pi$ with $\ell\ge 2$, Theorem \ref{thm:structure} is consistent
with a monotone phase, but it does of course not answer the question.

It is known, however, that the solutions of our minimisation problem are symmetric up to translation in the
following sense \cite[Lemma 3.2]{Ignat-Moser:17}: if $d \in \N - \alpha/\pi$ and $m \in \A_h(d)$ is a
minimiser of $E_h$ in $\A_h(d)$, then there exists $t_0 \in \R$ such that
\[
m_1(t_0 - x_1) = m_1(t_0 + x_1) \quad \text{and} \quad m_2(t_0 - x_1) = - m_2(t_0 + x_1)
\]
for all $x_1 \in \R$.

Another open question is whether minimisers of $E_h$ in $\A_h(\ell-\alpha/\pi)$ are unique (up to translation in $x_1$) for $\ell\geq 2$. The answer is yes for $\A_h(\alpha/\pi)$ and for $\A_h(1-\alpha/\pi)$,
as the energy is strictly convex in the $m_1$-component\footnote{Minimising $E_h$ in $\A_h(\alpha/\pi)$ or in $\A_h(1-\alpha/\pi)$ is in fact equivalent to minimising $E_h$ under the constraint $m_1(0) = 1$ or $m_1(0) = -1$, respectively.} (see \cite[Proposition 1]{Ignat-Moser:16}).

\subsection{Scaling}

The three terms in the energy $E_h(m)$ have different scaling. However, 
after rescaling in the variable $x_1$ and renormalizing the energy, only one length scale remains.
This is why in the physical model, there is, in general, a parameter $\eps$ in front of the exchange energy. As our results are qualitative (and not necessarily quantitative), we fix that parameter $\eps=1$. The critical values
$H_\ell$, $H_\ell^0$, $H_\ell^+$, and $H_\ell^-$ in our main results must of course be expected to
depend on $\epsilon$. However, as we do not attempt to give the optimal values,
we do not discuss this question any further.

For a critical point $m$ of the functional $E_h$, the Pohozaev identity (see \cite[Proposition~1.1]{Ignat-Moser:17}) implies that we have equipartition of the energy coming from the local terms (the exchange and anisotropy energies).
This equality effectively fixes the length scale of the core of a N\'eel wall (which is of order $l \sim1$ for $\eps=1$). The nonlocal term is dominant and has the length scale of the tails much larger than $l$.

\subsection{Relation to other models}

As we have mentioned in the introduction, the model studied in this paper can be seen as a nonlocal ``perturbation" of the (local) Allen-Cahn model (the latter consisting only in the exchange and anisotropy terms), see \cite[Appendix]{Ignat-Moser:17}. The nonlocal term in $E_h$ is in fact the key ingredient for existence of transition layers with higher winding number.

We can also relate our model to the study of $\frac12$-harmonic maps defined on the real axis with values into the unit sphere (see, e.g., \cite{Lio_Riv, Lio} for regularity, compactness and bubble analysis and \cite{Millot-Sire:15}
for a Ginzburg-Landau approximation). There is a model for boundary vortices in micromagnetics (see e.g. \cite{KohnSlastikov-2005, Kurzke-2006, Kurzke-03, Moser_ARMA, Moser_CPAM}) that amounts to a ``perturbation" of the $\frac12$-harmonic map problem by a zero-order term comparable to the anisotropy in our model.
But the problem
studied in this paper has an additional higher-order term, which changes the behaviour of the
problem dramatically. (This is most striking in the asymptotic analysis carried out in
an earlier paper \cite{Ignat-Moser:16}, where the interaction between different N\'eel walls is
studied. We have attraction where the model for boundary vortices would give repulsion and vice versa.)

\paragraph{Acknowledgment.} R.I. acknowledges partial support by the ANR project ANR-14-CE25-0009-01.

\section{Preliminaries} \label{sect:prelim}

In this section we discuss a few tools for the analysis of our problem and
recall some known results. Since the nonlocal term in the energy functional (i.e., the stray field energy)
is not only the most challenging to analyse, but in fact determines the
behaviour of the system to a considerable extent, it will have a prominent place here.

\subsection{Representations of the stray field energy}

We have already seen two different representations of the stray field energy.
One of them is given by \eqref{eqn:double_integral} and will be used in some of our
estimates later on. The other representation involves the stray field potential $u$
that is determined by the boundary value problem \eqref{eqn:u_harmonic_higher}, \eqref{eqn:boundary_condition_higher}.
In order to make the discussion of the problem rigorous, we introduce the inner product
$\scp{\blank}{\blank}_{\dot{H}^1(\R_+^2)}$ on the set of all $\phi \in C^\infty(\R \times [0, \infty))$
with compact support in $\R \times [0, \infty)$ (so $\phi$ is allowed to take non-zero values on the
boundary $\R \times \{0\}$). This inner product is defined by the formula
\[
\scp{\phi}{\psi}_{\dot{H}^1(\R_+^2)} = \int_{\R_+^2} \nabla \phi \cdot \nabla \psi \, dx.
\]
The space $\dot{H}^1(\R_+^2)$ is then the completion of the resulting inner product space.
Its elements are not functions, strictly speaking, as the completion will
conflate all constants. Nevertheless, we will sometimes implicitly pick a specific constant
(for example, by considering the limit at $\infty$) and treat elements of
$\dot{H}^1(\R_+^2)$ as functions.

Given $m_1 \in h + H^1(\R)$, there exists
a unique solution $u \in \dot{H}^1(\R_+^2)$ of the boundary value problem
\eqref{eqn:u_harmonic_higher}, \eqref{eqn:boundary_condition_higher}.
This solution will satisfy
\[
\|m_1 - h\|_{\dot{H}^{1/2}(\R)}^2 = \int_{\R_+^2} |\nabla u|^2 \, dx.
\]

While it is sometimes convenient to work with $u$, there is also a dual problem
that is more useful for other purposes. Namely, if $u \in \dot{H}^1(\R_+^2)$ solves
\eqref{eqn:u_harmonic_higher}, \eqref{eqn:boundary_condition_higher}, then we consider
$\nabla^\perp u = (-\dd{u}{x_2}, \dd{u}{x_1})$, which will satisfy $\curl \nabla^\perp u = 0$
in $\R_+^2$. By the Poincar\'e lemma, there exists $v \colon \R_+^2 \to \R$ such that $\nabla v = \nabla^\perp u$
(i.e., such that $u$ and $v$ are conjugate harmonic functions).
This implies that $\Delta v = 0$ in $\R_+^2$ and $\dd{v}{x_1}(\blank, 0) = m_1'$ in $\R$.
After adding a suitable constant, we thus obtain a solution of the boundary value problem
\begin{alignat}{2}
\Delta v & = 0 & \quad & \text{in $\R_+^2$}, \label{eqn:v_harmonic} \\
v & = m_1 - h && \text{on $\R \times \{0\}$}. \label{eqn:dual_boundary_condition}
\end{alignat}
If we are content to fix $v$ only up to a constant, then we may regard it as
an element of $\dot{H}^1(\R_+^2)$. Of course we also have the identity
\[
\|m_1 - h\|_{\dot{H}^{1/2}(\R)}^2 = \int_{\R_+^2} |\nabla v|^2 \, dx,
\]
which may be more familiar to the reader as $v$ is the harmonic extension of $m_1 - h$ to
the upper half-plane.

\subsection{The Euler-Lagrange equation}

As we are interested in minimising the functional $E_h$ in $\A_h(d)$, we will study
the Euler-Lagrange equation for critical points of $E_h$. Given $m \colon \R \to \Ss^1$,
it is convenient to represent the Euler-Lagrange equation in terms of the lifting $\phi \colon \R \to \R$.
That is, we write $m = (\cos \phi, \sin \phi)$, and then the equation becomes
\begin{equation} \label{eqn:Euler-Lagrange_higher}
\phi'' = (h - \cos \phi + u'(\blank, 0)) \sin \phi \quad \text{in $\R$}.
\end{equation}
Here $u \in \dot{H}^1(\R_+^2)$ is the stray field potential as introduced in the preceding
section and we use the abbreviation $u' = \dd{u}{x_1}$. The derivation of this equation
is almost identical to the corresponding calculations given in our previous work
\cite{Ignat-Moser:16}. It is known \cite[Proposition~3.1]{Ignat-Moser:17}
that solutions of \eqref{eqn:Euler-Lagrange_higher} must be smooth.

If $m \in \A_h(d)$ for a given winding number $d$, then there will be at least a certain
number of points, say $a_1, \dotsc, a_N \in \R$, where $m_1(a_n) = \pm 1$ and $m_2(a_n) = 0$
(i.e., $\phi(a_n) \in \pi \Z$). We can use these points as a proxy for the positions of the
N\'eel walls in the given configuration. One of the tasks for the proofs of the main theorems
will be to estimate the rate of decay of $m_1 - h$ as we move away from one of the points $a_1, \dotsc, a_N$.
But some information about these points is already available from our previous work
\cite[Lemma 3.1]{Ignat-Moser:17}, namely that for energy minimising solutions of the Euler-Lagrange equation,
the number $N$ is determined uniquely by the prescribed winding number.
(Although it is assumed that $h \not= 1$ in the other paper, the proof of this
statement does not depend on the assumption.)

\begin{lemma}\label{lem:no_unaccounted_walls}
For any $h \in [0, 1]$ and $d \in \Z \pm \{0, \alpha/\pi\}$, the following holds true.
\begin{enumerate}
\item \label{item:no_unaccounted_walls}
Suppose that $d \not= 0$ and $m \in \A_h(d)$ minimises $E_h$ in $\A_h(d)$. Then
\[\hspace{-0.5cm}
|m_1^{-1}(\{\pm 1\})| = \begin{cases}
2|d| - 1 & \text{if $h = 1$ and $d \in \Z$},\\
2 |d| & \text{if $h < 1$ and $d \in \Z$}, \\
2 \ell - 1 & \text{if $h < 1$ and $|d| = \ell-1+ \alpha/\pi$ or $|d| = \ell- \alpha/\pi$ with $\ell \in \N$}.
\end{cases}
\]
Furthermore, if $a \in \R$ with $m_1(a) = \pm 1$, then
$m_2'(a) \not= 0$. (Therefore, any lifting $\phi \colon \R \to \R$ of $m$ will satisfy $\phi'(a)\neq 0$.)
\item \label{item:lower_energy_bound}
If $0 < |d| < 1$, then $\E_h(d) \geq (1 - \arccos(\pi d))^2$.
If $|d| \ge 1$, then $\mathcal{E}_h(d) \ge 2|d| - 1$. In particular, $\E_h(d)>0$ for every degree $d\neq 0$.
\end{enumerate}
\end{lemma}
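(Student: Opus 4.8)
The plan is to prove the two parts in order, since part \eqref{item:lower_energy_bound} will rely on the counting of zeros of $m_2$ (equivalently, the points where $m_1 = \pm 1$) established in part \eqref{item:no_unaccounted_walls}. For part \eqref{item:no_unaccounted_walls}, the first observation is that if $m \in \A_h(d)$ is a minimiser, then it solves the Euler-Lagrange equation \eqref{eqn:Euler-Lagrange_higher}, and we may work with a continuous lifting $\phi$. The key structural fact I would exploit is a \emph{replacement/surgery argument}: whenever $\phi$ attains a value in $\pi\Z$ and then returns to the same value (or more generally, whenever the lifting has an ``unnecessary'' oscillation that does not contribute to the winding number $d$), one can cut out that portion and reconnect, strictly decreasing the energy unless the oscillation was trivial. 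Concretely, between two consecutive points $a_n < a_{n+1}$ with $m_1(a_n) = m_1(a_{n+1}) = \pm 1$ of the \emph{same} sign (so $\phi(a_n) \equiv \phi(a_{n+1}) \pmod{2\pi}$ and in fact equal, being consecutive), one could translate the piece on $[a_{n+1}, \infty)$ to start at $a_n$, deleting the intervening bump; the exchange and anisotropy energies drop by the energy of the deleted bump, and the stray field energy is controlled by a monotonicity/rearrangement estimate for $\dot H^{1/2}$. This forces the values $m_1(a_n)$ to alternate between $+1$ and $-1$, and combined with the prescribed boundary behaviour $\lim_{x_1\to\pm\infty} m(x_1) = (\cos\alpha, \pm\sin\alpha)$ and the definition of $\deg(m) = d$, a direct bookkeeping of how much $\phi$ must travel pins down $N = |m_1^{-1}(\{\pm 1\})|$ exactly to the three stated values (the parity and the $\pm\alpha$ endpoint corrections account for the case distinction $h=1$ versus $h<1$ and $d\in\Z$ versus $d\in\Z\pm\alpha/\pi$). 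The statement $m_2'(a)\neq 0$ at such a point $a$ follows from uniqueness for the ODE \eqref{eqn:Euler-Lagrange_higher}: if $m_1(a) = \pm 1$ and $m_2(a) = 0$ and also $m_2'(a) = 0$, i.e. $\phi(a)\in\pi\Z$ and $\phi'(a) = 0$, then $\phi$ solves the Euler-Lagrange equation with Cauchy data forcing $\phi\equiv\phi(a)$ (one must be slightly careful since $u'(\cdot,0)$ depends nonlocally on $\phi$, but the argument can be run by a continuation/contradiction scheme or by noting $\phi$ constant makes the right-hand side vanish identically and the nonlocal term as well), contradicting $d\neq 0$.

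For part \eqref{item:lower_energy_bound}, I would separate the two regimes. When $0 < |d| < 1$, the winding number is $|d| = \alpha/\pi$ or $|d| = 1 - \alpha/\pi$, so $m$ performs a single transition and $m_1$ must travel from $\cos\alpha$ at $-\infty$ through the value $\pm 1$ and back (or, in the $1-\alpha/\pi$ case, all the way around); the simplest bound comes from the exchange-plus-anisotropy (local) energy alone. Using $\frac12(|m'|^2 + (m_1-h)^2) \ge |m'|\,|m_1 - h| \ge |m_1 - h|\,|m_1'| \cdot \frac{1}{\sqrt{1-m_1^2}}\cdot\sqrt{1-m_1^2}$ — more cleanly, writing $m = (\cos\phi,\sin\phi)$ and estimating $\frac12(\phi'^2 + (\cos\phi - h)^2)\ge |\phi'|\,|\cos\phi - h|$ and integrating the resulting one-dimensional ``heteroclinic'' cost $\int |\cos\phi - h|\,d\phi$ over a range of $\phi$ of length at least $2\alpha$ (the minimal excursion compatible with degree $\alpha/\pi$, recalling $\alpha = \arccos h$) — yields the bound $(1 - \arccos(\pi d))^2 = (1-\alpha)^2$ after computing the elementary integral $\int_{-\alpha}^{\alpha}|\cos\theta - \cos\alpha|\,d\theta$ and squaring; one checks the constant works out. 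When $|d|\ge 1$, the lifting $\phi$ must traverse a total variation of at least $2\pi|d| - O(1)$, passing through at least $2|d| - 1$ points of $\pi\Z$ (this is exactly the count from part \eqref{item:no_unaccounted_walls} when it applies, but for a \emph{lower bound on $\mathcal E_h$} we do not need $m$ to be a minimiser — any $m\in\A_h(d)$ has $\phi$ crossing $\pi\Z$ that many times by degree theory); near each such crossing $\phi$ must move through a full interval where $|\cos\phi - h|$ is bounded below away from a neighbourhood, and summing the local energy cost $\frac12(\phi'^2 + (\cos\phi-h)^2)$ over these $2|d|-1$ transitions, each contributing at least $1$ to the energy (the normalized cost of a single $\pm e_2 \to \mp e_2$ type passage, or rather the cost $\ge 1$ per unit of ``wall''), gives $\mathcal E_h(d)\ge 2|d| - 1$. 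The final assertion $\mathcal E_h(d) > 0$ for $d\neq 0$ is then immediate from the two bounds, since $1 - \arccos(\pi d) = 1 - \alpha > 0$ (as $\alpha < \pi/2 < 1$ is false — wait, $\pi/2 \approx 1.57 > 1$, so one must instead note $\alpha \le \pi/2$ and the relevant quantity is $1 - \arccos(\pi|d|)$ with $\pi|d|\in(0,1)$ meaning $\arccos(\pi|d|) < \pi/2$; but we need it $< 1$, equivalently $\pi|d| > \cos 1 \approx 0.54$, which need \emph{not} hold — so actually $(1-\arccos(\pi d))^2$ could be negative as written, meaning the intended reading is $\max(0, \cdot)$ or the bound is genuinely only useful for $|d|$ not too small; regardless, strict positivity of $\mathcal E_h(d)$ follows separately from: if $\mathcal E_h(d) = 0$ then a minimising sequence converges to a constant, which has degree $0\neq d$, contradiction — this compactness-free argument via lower semicontinuity of each term suffices).

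The main obstacle I anticipate is making the surgery/replacement argument in part \eqref{item:no_unaccounted_walls} rigorous for the \emph{nonlocal} term: deleting or translating a piece of the profile changes $m_1$ globally, and one must verify the stray field energy $\|m_1 - h\|_{\dot H^{1/2}}^2$ does not increase under the relevant operation. The double-integral representation \eqref{eqn:double_integral} is the right tool — cutting out a bump and gluing is a kind of contraction of the domain that, for the $1/|x-y|^2$ kernel, can be shown not to increase the Gagliardo seminorm, but the details of the gluing (ensuring continuity of the lifting, matching values, and a clean monotonicity inequality) require care. An alternative, which may be cleaner and is likely what the authors do, is to argue by contradiction using the Euler-Lagrange equation directly: extra crossings would create a sub-configuration that is itself a critical point of lower or zero degree, and the equipartition (Pohozaev) identity or a direct energy comparison rules this out. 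I would try the Pohozaev-based route first, falling back on the explicit surgery only if needed.
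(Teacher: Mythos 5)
Your proposal diverges from the paper in a way worth spelling out, and both halves contain genuine gaps.

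\textbf{Part (1).} The paper does not prove this part at all: it simply cites Lemma 3.1 of \cite{Ignat-Moser:17}. Your surgery/replacement strategy (cutting out an unnecessary oscillation and translating the tail back) is therefore a genuinely different, self-contained route. However, you yourself flag the crucial obstruction — showing that the Gagliardo seminorm $\|m_1-h\|^2_{\dot H^{1/2}}$ does not increase when you delete a bump and reglue — and you do not resolve it. This is not a small technicality: the $1/|x-y|^2$ kernel is not manifestly monotone under contraction of the domain, and the gluing must also keep the lifting continuous and the endpoint values consistent with the prescribed degree. As written, this part of your proposal is a plan, not a proof. Your ODE-uniqueness argument for $m_2'(a)\neq 0$ has the right shape (constant $\phi$ kills the nonlocal term because $m_1'\equiv 0$, so the Cauchy problem has the constant as a solution), but it also needs the local uniqueness statement for the nonlinear ODE with a fixed, smooth nonlocal forcing, which you should make explicit.

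\textbf{Part (2).} The paper's argument is more elementary than yours and is worth internalising. One writes
\[
|m'|^2+(m_1-h)^2\;\ge\;2|m_1'|\,|m_1-h|,
\]
(note: $|m_1'|$, not $|\phi'|$) and integrates over the interval from a wall point $a_n$, where $m_1=\pm1$, to the next point $b_n$ where $m_1=h$ (or to $\pm\infty$). The right side is a total derivative, so each such half-interval contributes at least $\bigl((\pm1)-h\bigr)^2$ to $2E_h$. There is no need to integrate in the $\phi$ variable, and in fact your computation is incorrect: $\int_{-\alpha}^{\alpha}|\cos\theta-\cos\alpha|\,d\theta = 2\sin\alpha-2\alpha\cos\alpha$, which is not $(1-\alpha)^2$ and not $(1-h)^2$. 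You then get tangled up trying to reconcile $(1-\arccos(\pi d))^2$ with the calculation. Your instinct that something is off is right: the proof actually yields $(1-h)^2=(1-\cos\alpha)^2$ for $|d|=\alpha/\pi$ and $(1+h)^2=(1-\cos(\pi-\alpha))^2$ for $|d|=1-\alpha/\pi$, both of which are $(1-\cos(\pi d))^2$; the $\arccos$ in the stated bound appears to be a typo for $\cos$, and indeed $\arccos(\pi d)$ is undefined when $\pi|d|>1$. Taking this reading, the strict positivity of $\E_h(d)$ for $d\neq 0$ is then immediate from the stated bounds — no lower-semicontinuity fallback is needed.

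\textbf{On the case $|d|\ge 1$.} Your claim that ``each [of the $2|d|-1$ transitions] contributes at least $1$'' is false: for $h=1$ half of the contributions $((-1)^n-h)^2$ vanish identically. The paper's argument is an aggregate one: with $N\ge 2|d|-1$ alternating crossing points, one sums
\[
E_h(m)\;\ge\;\sum_{n=1}^{N}\bigl((-1)^n-h\bigr)^2,
\]
and a direct expansion shows this sum is $\ge N$ for every $h\in[0,1]$ (for $N$ odd it equals $N+2h+Nh^2$; for $N$ even it equals $N(1+h^2)$). Combined with $N\ge 2|d|-1$ this gives the claimed bound. Notice also that this part of the paper's argument does not require $m$ to be a minimiser and does not use part (1) — the lower bound on $N$ comes from the intermediate value theorem applied to the lifting, exactly as you anticipated, but the per-transition estimate is what needs to be phrased as a sum rather than a uniform lower bound.
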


\begin{proof}
Statement \ref{item:no_unaccounted_walls} was proved in \cite[Lemma 3.1]{Ignat-Moser:17}.

For statement \ref{item:lower_energy_bound}, we observe the following.
If $h \not= 0$ and $m \in \A_h(\pm \alpha/\pi)$, then there exists $a \in \R$ such that $m_1(a) = 1$, while
$\lim_{x_1 \to \pm\infty} m_1(x_1) = h$. Thus
\[
\int_a^\infty (|m'|^2 + (m_1 - h)^2) \, dx_1 \geq 2 \int_a^\infty |m_1'| |m_1 - h|  \, dx_1 \geq (1 - h)^2.
\]
A similar estimate holds for the integral over $(-\infty, a)$.
Hence $E_h(m) \ge (1 - h)^2$.

If $h = 0$ and $m \in \A_h(\pm 1/2)$, then we may have $a \in \R$ with $m_1(a) = -1$ instead, but this
situation permits the same arguments.

If $m\in \A_h(d)$ for $d \not\in \{0, \pm \alpha/\pi\}$, then there exist at least $N \ge 2|d| - 1$ points, say
$a_1, \dotsc, a_N \in \R$, with $a_1 < \dotsb < a_N$ and $m_1(a_n) = (-1)^n$ for $n = 1, \dotsc, N$.
Furthermore, there exist $b_1, \dotsc, b_{N - 1} \in \R$ such that $a_n < b_n \le a_{n + 1}$ and $m_1(b_n) = h$ for
$n = 1, \dotsc, N - 1$. Set $b_0 = -\infty$ and $b_N = \infty$. Then by the same arguments as before,
\[
\int_{a_n}^{b_n} (|m'|^2 + (m_1 - h)^2) \, dx_1 \geq \left((-1)^n - h\right)^2, \quad n = 1, \dotsc, N.
\]
Considering the intervals $(b_{n - 1}, a_n)$ as well and summing over $n$, we then find that
$E_h(m) \ge (1 + h)^2$ if $d = \pm(1 - \alpha/\pi)$
and $E_h(m) \ge 2|d| - 1$ otherwise.
\end{proof}

\begin{remark}
\label{rem:degree}
We recall that the following was proved in  \cite[Propositions 2.2 and 2.3]{Ignat-Moser:17}.
\begin{enumerate}
\item (Monotonicity) If $d_1, d_2\in \Z+\{0,\pm \alpha /\pi\}$ with $0\leq d_1\leq d_2$ (and if $0<h<1$, we suppose that $(d_1, d_2)\neq (\ell+\alpha/\pi, 1+\ell-\alpha/\pi)$ for $\ell\in \Z$), then $\E_h(d_1) \leq \E_h(d_2)$.
This is because a transition of degree $d_2$ contains also a (sub)transition of degree $d_1$,
except for the exceptional case described above.

\item (Subadditivity) If $d_1, d_2, d \in \Z+\{0,\pm \alpha /\pi\}$ with $d_1+d_2=d$ (if $h=\cos \frac\pi 3$ and $d_2-d_1\in \Z$, we suppose that $d\in \Z$), then $\E_h(d) \leq \E_h(d_1)+ \E_h(d_2)$.
This is because the concatenation of two transitions of degrees $d_1$ and $d_2$ has more energy than an optimal transition of degree  $d_1+d_2$. Two such neighbouring transitions are compatible if 
either $d_1$ or $d_2$ is an integer, or if $d_1+d_2\in \Z$; this explains the constraint above for
$h=\cos \frac\pi 3$.
\end{enumerate}
\end{remark}

\subsection{$H^1$ and $H^2$-estimates away from the N\'eel walls}

The following is a consequence of the Euler-Lagrange equation, obtained
by the use of suitable test functions and differentiation. It provides local estimates
for a solution of \eqref{eqn:Euler-Lagrange_higher} away from
the points $a_1, \dotsc, a_N$ where $m_1$ takes the value $\pm 1$
(thought of as the locations of the N\'eel walls and discussed in the preceding section).

The following inequalities include the energy density of the stray field energy, given in terms of
the solution $v \in \dot{H}^1(\R_+^2)$ of \eqref{eqn:v_harmonic}, \eqref{eqn:dual_boundary_condition}.
As a consequence, we have inequalities involving integrals over $\R$ and over $\R_+^2$ simultaneously.
For convenience, we use the following shorthand notation: given a function $f \colon \R_+^2 \to \R$,
we write
\[
\int_{-\infty}^\infty f \, dx_1 = \int_{-\infty}^\infty f(x_1, 0) \, dx_1.
\]

\begin{proposition} \label{prop:local_energy_estimates}
Let $h \in [0, 1]$. Suppose that $I \subseteq \R$ is an open set
and $\phi \in H_\loc^1(\R)$. Let $m = (\cos \phi, \sin \phi)$ and suppose that $m_1 - h \in \dot{H}^{1/2}(\R)$ and
$m_2(x_1) \not= 0$ for all $x_1 \in I$.
Let $\eta \in C_0^\infty(\R^2)$ with $\supp \eta(\blank, 0) \subseteq I$.
Suppose that $v \in \dot{H}^1(\R_+^2)$ is the unique solution of $\Delta v = 0$ in $\R_+^2$
with $v(x_1, 0) = m_1(x_1) - h$ for all $x_1 \in \R$. If $\phi$
is a solution of \eqref{eqn:Euler-Lagrange_higher} in $I$, then
\begin{multline} \label{eqn:first_order_inequality}
\int_{-\infty}^\infty \eta^4 \left(\frac{1}{2} |m'|^2 + (m_1 - h)^2\right) \, dx_1 + \int_{\R_+^2} \eta^4 |\nabla v|^2 \, dx \\
\le 576 \int_{-\infty}^\infty (\eta')^4 \, dx_1 + 16 \int_{\R_+^2} v^2 \eta^2 |\nabla \eta|^2 \, dx
\end{multline}
and
\begin{multline} \label{eqn:second_order_inequality}
\int_{-\infty}^\infty \eta^2 \left(|m''|^2 + (m_1')^2 + \frac{|m'|^4}{m_2^2}\right) \, dx_1 + \int_{\R_+^2} \eta^2 |\nabla^2 v|^2 \, dx \\
\le 32 \int_{-\infty}^\infty (\eta')^2 |m'|^2 \, dx_1 + 24 \int_{\R_+^2} |\nabla \eta|^2 |\nabla v|^2 \, dx.
\end{multline}
\end{proposition}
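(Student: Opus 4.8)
The plan is to combine two ingredients: the Caccioppoli-type identity coming from the harmonicity of the conjugate extension $v$, and the Euler--Lagrange equation \eqref{eqn:Euler-Lagrange_higher}, where the trace $u'(\blank, 0)$ appearing in \eqref{eqn:Euler-Lagrange_higher} equals $\dd{v}{x_2}(\blank, 0) =: w$ because $\nabla v = \nabla^\perp u$. For \eqref{eqn:first_order_inequality} I would first test $\Delta v = 0$ against $\eta^4 v$ over $\R_+^2$, which after an integration by parts yields
\[
\int_{-\infty}^\infty \eta^4 (m_1 - h) w \, dx_1 = -\int_{\R_+^2} \eta^4 |\nabla v|^2 \, dx - 4 \int_{\R_+^2} \eta^3 v \, \nabla \eta \cdot \nabla v \, dx .
\]
On $\supp \eta(\blank, 0) \subseteq I$ we have $m_2 \ne 0$, so the Euler--Lagrange equation gives $w = \phi''/m_2 + (m_1 - h)$, hence $(m_1 - h) w = (m_1 - h)\phi''/m_2 + (m_1 - h)^2$; a further integration by parts in $x_1$, using $\bigl( (m_1 - h)/m_2 \bigr)' = -\phi' (1 - h m_1)/m_2^2$, then produces the identity
\[
\int_{\R_+^2} \eta^4 |\nabla v|^2 \, dx + \int_{-\infty}^\infty \eta^4 (m_1 - h)^2 \, dx_1 + \int_{-\infty}^\infty \eta^4 \frac{(\phi')^2 (1 - h m_1)}{m_2^2} \, dx_1 = 4 \int_{-\infty}^\infty \eta^3 \eta' \frac{(m_1 - h)\phi'}{m_2} \, dx_1 - 4 \int_{\R_+^2} \eta^3 v \, \nabla \eta \cdot \nabla v \, dx .
\]

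The next step uses two elementary inequalities valid for $h \le 1$ and $|m_1| \le 1$: first, $\frac{1 - h m_1}{m_2^2} = \frac{1 - h m_1}{1 - m_1^2} \ge \frac12$ (equivalently $(m_1 - h)^2 + 1 - h^2 \ge 0$); second, $\frac{(m_1 - h)^2}{1 - h m_1} \le 2 |m_1 - h|$ (equivalently $|m_1 - h| \le 2(1 - h m_1)$). Writing $Q$ for the third term on the left of the last identity, $P = \int \eta^4 (m_1 - h)^2 \, dx_1$ and $X = \int_{\R_+^2} \eta^4 |\nabla v|^2 \, dx$, the first inequality gives $Q \ge \frac12 \int \eta^4 |m'|^2 \, dx_1$, so the left-hand side already dominates that of \eqref{eqn:first_order_inequality}, and the surplus of $Q$ together with $P$ and $X$ will act as reservoirs. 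For the first term on the right I would use Cauchy--Schwarz, peeling off a factor $(1 - h m_1)^{1/2}$ to bring in $Q^{1/2}$, then the second elementary inequality and a further Cauchy--Schwarz (pairing $\eta^2 |m_1 - h|$ with $(\eta')^2$) to obtain a bound by a constant times $Q^{1/2} P^{1/4} \bigl( \int (\eta')^4 \, dx_1 \bigr)^{1/4}$; a weighted Young inequality then converts this into small multiples of $Q$ and $P$ plus a constant times $\int (\eta')^4 \, dx_1$. The second term on the right is bounded by Cauchy--Schwarz as $4 X^{1/2} Y^{1/2}$ with $Y = \int_{\R_+^2} v^2 \eta^2 |\nabla \eta|^2 \, dx$, and the elementary estimate $X - 4 X^{1/2} Y^{1/2} \ge \frac12 X - 8 Y$ closes the absorption. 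Collecting what survives gives \eqref{eqn:first_order_inequality}, the constants $576$ and $16$ coming out of the Young splittings.

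For \eqref{eqn:second_order_inequality} the plan is the same one order higher: differentiate the Euler--Lagrange equation, equivalently note that $\dd{v}{x_1}$ and $\dd{v}{x_2}$ are harmonic with boundary traces $m_1'$ and $w$. Testing $\Delta \bigl( \dd{v}{x_i} \bigr) = 0$ against $\eta^2 \dd{v}{x_i}$ and summing over $i$ produces $\int_{\R_+^2} \eta^2 |\nabla^2 v|^2 \, dx$ together with the boundary term $\int_{-\infty}^\infty \eta^2 m_1' w' \, dx_1$ and cross terms carrying $\nabla \eta$; the differentiated equation lets one rewrite $m_1' w'$ as $(\phi'')^2$ (after integrating $\int \eta^2 \phi' \phi''' \, dx_1$ by parts), plus $(m_1')^2$, plus a term proportional to $\eta^2 (\phi')^2 \phi'' m_1 / m_2$, which, integrated by parts once more via $(m_1/m_2)' = -\phi'/m_2^2$, contributes $\int \eta^2 (\phi')^4/m_2^2 \, dx_1$. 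Using the identities $|m''|^2 = (\phi'')^2 + |m'|^4$, $(m_1')^2 = (\phi')^2 m_2^2$, $|m'|^4/m_2^2 = (\phi')^4/m_2^2$ and the bound $m_2^2 \le 1$, the left-hand side of the resulting identity controls that of \eqref{eqn:second_order_inequality}, and the remaining cross terms (carrying $\eta'$ or $\nabla \eta$) are absorbed into the quadratic reservoir terms by Cauchy--Schwarz and Young's inequality (using $|m_1| \le 1$ and $m_2^2 \le 1$), leaving $32 \int (\eta')^2 |m'|^2 \, dx_1 + 24 \int |\nabla \eta|^2 |\nabla v|^2 \, dx$.

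I expect the main obstacle to be the nonlocal cross terms, and more fundamentally the fact that these are estimates \emph{away from} the N\'eel walls: on $I$, and hence on $\supp \eta(\blank, 0)$, one knows only that $m_2 \ne 0$, with no quantitative lower bound, so the ratios $(m_1 - h)/m_2$ and $m_1/m_2$ that occur in the cross terms are genuinely unbounded. The argument works only because the reservoir quantities $\int \eta^4 (\phi')^2 (1 - h m_1)/m_2^2 \, dx_1$ and $\int \eta^2 (\phi')^4/m_2^2 \, dx_1$ thrown off by the integrations by parts carry precisely the singular weights needed to control those cross terms; making every absorption close with the exact constants stated is then careful bookkeeping with weighted Cauchy--Schwarz and Young inequalities.
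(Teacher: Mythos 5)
Your proof of the first inequality \eqref{eqn:first_order_inequality} follows exactly the same route as the paper: test $\Delta v=0$ against $\eta^4 v$, rewrite $v\,\partial v/\partial x_2$ on the trace via the Euler--Lagrange equation $w=\phi''/m_2+(m_1-h)$, integrate the $\phi''$ term by parts in $x_1$ using $\bigl((m_1-h)/m_2\bigr)'=-\phi'(1-hm_1)/m_2^2$, and absorb the cross terms by Young's inequality against the resulting coercive quantities. Two cosmetic differences are worth noting, both to your credit: your observation that $\frac{1-hm_1}{m_2^2}\ge\frac12$ is \emph{equivalent} to $(m_1-h)^2+1-h^2\ge0$ is cleaner and more elementary than the paper's Lemma~\ref{lem:auxiliary_inequality} (calculus of the quotient); and your bound $|m_1-h|\le 2(1-hm_1)$ sharpens the paper's $|m_1-h|\le 3(1-hm_1)$ (inequality \eqref{eqn:inequality_for_cos_phi}), which in fact makes your constants slightly better --- if you trace through the Young splitting your method yields a constant smaller than $576$ in front of $\int(\eta')^4$, which is of course still acceptable since the inequality gets weaker as the constant grows.

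For the second inequality \eqref{eqn:second_order_inequality} the paper does not give a proof at all: it quotes the estimate established in \cite[Lemma~3.3]{Ignat-Moser:17} in terms of $u$ and merely converts to $v$ using $|\nabla u|=|\nabla v|$, $|\nabla^2 u|=|\nabla^2 v|$. You instead sketch an independent derivation, which is the more informative route. The sketch is on the right track --- the identities $m_1'w' = -\phi'\phi''' + \frac{(\phi')^2\phi''\,m_1}{m_2} + (m_1')^2$, the integration by parts of $\int\eta^2\phi'\phi'''$ to produce $\int\eta^2(\phi'')^2$, and the further integration by parts of $\frac13\int\eta^2\frac{m_1}{m_2}\bigl((\phi')^3\bigr)'$ to produce $\frac13\int\eta^2(\phi')^4/m_2^2$ do indeed reproduce the structure of the cited lemma. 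However, one detail in your sketch is imprecise enough that I would flag it: testing $\Delta v_{,i}=0$ against $\eta^2 v_{,i}$ and summing over $i$ does not produce the boundary term $\int\eta^2 m_1'w'$ as stated. The correct boundary term is $\int\eta^2\bigl(v_{,1}v_{,12}+v_{,2}v_{,22}\bigr)\big|_{x_2=0} = \int\eta^2(m_1'w'-w\,m_1'')\,dx_1$, since $v_{,22}=-v_{,11}=-m_1''$ on the trace; after integrating $\int\eta^2 w\,m_1''$ by parts you in fact pick up $2\int\eta^2 m_1'w'$ together with an extra cross term $2\int\eta\eta'w\,m_1'$. The overall signs also need care so that the $(\phi'')^2$ and $(\phi')^4/m_2^2$ contributions land on the coercive side. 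None of this is a fatal gap --- it is the same computation that underlies \cite[Lemma~3.3]{Ignat-Moser:17} --- but the bookkeeping is more delicate than the sketch suggests, and producing the precise constants $32$ and $24$ would need to be done with some care.

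Your closing observation about the absence of a quantitative lower bound on $m_2$ on $I$, and the fact that the reservoir terms $\int\eta^4(\phi')^2(1-hm_1)/m_2^2$ and $\int\eta^2(\phi')^4/m_2^2$ carry exactly the singular weights needed to absorb the singular cross terms, is the right way to think about why the proposition is nontrivial --- this is indeed the crux of the argument.
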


\begin{proof}
If $u \in \dot{H}^1(\R_+^2)$ is the solution of \eqref{eqn:u_harmonic_higher},
\eqref{eqn:boundary_condition_higher}, then $u$ and $v$ are conjugate harmonic functions
in the sense that $\nabla v = \nabla^\perp u$. Thus
equation \eqref{eqn:Euler-Lagrange_higher} may be written in the form
\[
\phi'' = \left(h - \cos \phi + \dd{v}{x_2}\right) \sin \phi.
\]
Since $\sin \phi\neq 0$ in $I$, then
\[
v(\blank, 0) \dd{v}{x_2}(\blank, 0) = \phi'' \frac{\cos \phi - h}{\sin \phi} + (\cos \phi - h)^2
\]
in $I$. It follows that
\[
\begin{split}
\int_{\R_+^2} \eta^4 |\nabla v|^2 \, dx & = - \int_{-\infty}^\infty \eta^4 v \dd{v}{x_2} \, dx_1 - 4 \int_{\R_+^2} \eta^3 v \nabla \eta \cdot \nabla v \, dx \\
& = - \int_{-\infty}^\infty \eta^4 (\cos \phi - h)^2 \, dx_1 + \int_{-\infty}^\infty \eta^4 (\phi')^2 \frac{h \cos \phi - 1}{\sin^2 \phi} \, dx_1 \\
& \quad + 4 \int_{-\infty}^\infty \eta^3 \eta' \phi' \frac{\cos \phi - h}{\sin \phi} \, dx_1 - 4\int_{\R_+^2} \eta^3 v \nabla \eta \cdot \nabla v \, dx.
\end{split}
\]
We claim that
\begin{equation} \label{eqn:inequality_for_cos_phi}
|\cos \phi - h| \le 3(1 - h \cos \phi).
\end{equation}
For $0\leq h \le \frac{1}{3}$, this is clear as $|\cos \phi - h| \le 2$ and $1 - h \cos \phi \ge \frac{2}{3}$
in this case. For $1\geq h > \frac{1}{3}$, we note that on the one hand,
\[
h (\cos \phi - h) \le 1 - h\cos \phi,
\]
because
\[
2 \cos \phi\leq 2 \le \frac{1}{h} + h,
\]
and on the other hand,
\[
h (h - \cos \phi) \le 1 - h\cos \phi
\]
trivially. Hence \eqref{eqn:inequality_for_cos_phi} follows.

In particular, using Young's inequality with three factors and exponents $2$, $4$, and $4$, we may estimate
\[
\begin{split}
4 \int_{-\infty}^\infty \eta^3 \eta' \phi' \frac{\cos \phi - h}{\sin \phi} \, dx_1 & \le \frac{1}{2} \int_{-\infty}^\infty \eta^4 (\phi')^2 \frac{|\cos \phi - h|}{3\sin^2 \phi} \, dx_1 \\
& \quad + \frac{1}{2} \int_{-\infty}^\infty \eta^4 (\cos \phi - h)^2 \, dx_1 + 288 \int_{-\infty}^\infty (\eta')^4 \, dx_1 \\
&\stackrel{\eqref{eqn:inequality_for_cos_phi}}{\le} \frac{1}{2} \int_{-\infty}^\infty \eta^4 (\phi')^2 \frac{1 - h \cos \phi}{\sin^2 \phi} \, dx_1 \\
& \quad + \frac{1}{2} \int_{-\infty}^\infty \eta^4 (\cos \phi - h)^2 \, dx_1 + 288 \int_{-\infty}^\infty (\eta')^4 \, dx_1.
\end{split}
\]
Furthermore,
\[
-4 \int_{\R_+^2} \eta^3 v \nabla \eta \cdot \nabla v \, dx \le \frac{1}{2} \int_{\R_+^2} \eta^4 |\nabla v|^2 \, dx + 8 \int_{\R_+^2} v^2 \eta^2 |\nabla \eta|^2 \, dx.
\]
Hence
\begin{multline*}
\frac{1}{2} \int_{-\infty}^\infty \eta^4 \left((\phi')^2 \frac{1 - h \cos \phi}{\sin^2 \phi} + (\cos \phi - h)^2\right) \, dx_1 + \frac{1}{2} \int_{\R_+^2} \eta^4 |\nabla v|^2 \, dx \\
\le 288 \int_{-\infty}^\infty (\eta')^4 \, dx_1 + 8 \int_{\R_+^2} v^2 \eta^2 |\nabla \eta|^2 \, dx.
\end{multline*}
By Lemma \ref{lem:auxiliary_inequality} in the appendix,
\[
\frac{1 - h \cos \phi}{\sin^2 \phi} \geq \frac12.
\]
Thus, we obtain the first
inequality.

The second inequality is a direct consequence of an estimate proved
in our paper \cite{Ignat-Moser:17}. Let $u \in \dot{H}^1(\R_+^2)$ be the solution of
\eqref{eqn:u_harmonic_higher}, \eqref{eqn:boundary_condition_higher}.
In the proof of \cite[Lemma 3.3]{Ignat-Moser:17}, it is shown
that under the above assumptions,
\begin{multline*}
\int_{-\infty}^\infty \eta^2 \left((\phi'')^2 + 2(\phi')^2 \sin^2 \phi + \frac{1}{3}(\phi')^4 (1 + \cot^2 \phi)\right) \, dx_1 + \frac{1}{2} \int_{\R_+^2} \eta^2 |\nabla^2 u|^2 \, dx \\
\le \frac{16}{3} \int_{-\infty}^\infty (\eta')^2 (\phi')^2 \, dx_1 + 4 \int_{\R_+^2} |\nabla \eta|^2 |\nabla u|^2 \, dx.
\end{multline*}
Noting that $|\nabla u| = |\nabla v|$ and $|\nabla^2 u| = |\nabla^2 v|$, and that
\[
|m'|^2 = (\phi')^2, \quad (m_1')^2 = (\phi')^2 \sin^2 \phi, \quad \text{and} \quad |m''|^2 = (\phi'')^2 + (\phi')^4,
\]
we derive inequality \eqref{eqn:second_order_inequality}.
\end{proof}

\section{Decay estimates}

We now study how fast $m_1 - h$ decays when we move away from the points where
$m_1$ takes one of the values $\pm 1$. For technical reasons, we proceed in two
steps, first proving a preliminary estimate before improving it in the second step.

\subsection{A preliminary $L^\infty$-estimate}

\begin{lemma} \label{lem:preliminary_decay_estimate}
There exists a constant $C > 0$ such that the following inequality holds true.
Suppose that $h \in [0, 1]$ and
$m \in H_\loc^1(\R; \Ss^1)$ is a critical point of $E_h$. For any $t \in \R$,
let
\[
\sigma(t) = 1 + \inf \set{|t - x_1|}{x_1 \in \R \text{ with } m_2(x_1) = 0}.
\]
Then
\[
|m_1(t) - h| \le C(\sigma(t))^{-3/4} \sqrt{E_h(m) + 1}
\]
for all $t \in \R$.
\end{lemma}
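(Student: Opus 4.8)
The goal is a pointwise decay estimate $|m_1(t)-h| \le C\sigma(t)^{-3/4}\sqrt{E_h(m)+1}$, where $\sigma(t)$ measures the distance from $t$ to the nearest point where $m_2$ vanishes (i.e.\ where $m_1 = \pm 1$). The natural strategy is to apply Proposition~\ref{prop:local_energy_estimates} on an interval around $t$ on which $m_2$ does not vanish, choosing the cutoff $\eta$ adapted to the scale $\sigma(t)$, and then to convert the resulting $L^2$-type energy bound into an $L^\infty$ bound by a Sobolev/interpolation argument. Since the first inequality \eqref{eqn:first_order_inequality} controls $\int \eta^4\bigl(\tfrac12|m'|^2 + (m_1-h)^2\bigr)\,dx_1$ in terms of $\int(\eta')^4\,dx_1$ plus a stray-field remainder, and a cutoff supported on an interval of length $\sim\sigma$ with $|\eta'|\lesssim \sigma^{-1}$ gives $\int(\eta')^4\,dx_1 \lesssim \sigma^{-3}$, the exponent $-3/4$ in the statement is exactly what one expects: it is the square root of $\sigma^{-3/2}$, matching $\|m_1-h\|_{L^2}^2 \lesssim \sigma^{-3/2}$ on that scale upgraded by an interpolation.

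\textbf{Steps.} First I would fix $t$, set $\rho = \sigma(t)$, and observe that by definition of $\sigma$ the interval $I = (t-\rho, t+\rho)$ (or a slightly shrunk version, to be safe that we stay strictly away from zeros of $m_2$) satisfies $m_2 \ne 0$ on $I$, so Proposition~\ref{prop:local_energy_estimates} applies there. Second, choose $\eta(x_1,x_2)$ to be a smooth cutoff, equal to $1$ on a ball of radius $\rho/4$ around $(t,0)$, supported in the ball of radius $\rho/2$, with $|\nabla\eta| \lesssim \rho^{-1}$; then $\int_{-\infty}^\infty (\eta')^4\,dx_1 \lesssim \rho^{-3}$, while the stray-field remainder $\int_{\R_+^2} v^2\eta^2|\nabla\eta|^2\,dx \lesssim \rho^{-2}\int_{B_{\rho/2}^+}v^2\,dx$. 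Third, I need to control $\int_{B_{\rho/2}^+} v^2\,dx$: since $v$ is harmonic with $\|\nabla v\|_{L^2(\R_+^2)}^2 = \|m_1-h\|_{\dot H^{1/2}}^2 \le 2E_h(m)$ and $v(x_1,0) = m_1(x_1)-h$ with $\|m_1-h\|_{L^2}^2 \le 2E_h(m)$, a Caccioppoli/Poincaré-type or trace argument on the half-ball gives $\int_{B_{\rho/2}^+} v^2 \,dx \lesssim \rho\,(E_h(m)+1)$ — morally $v$ is bounded by the energy, and integrating over a region of area $\sim\rho^2$ against the $\dot H^1$ control of $v$ yields the linear-in-$\rho$ factor. (One can also use $\|v\|_{L^\infty}$ estimates for harmonic functions with $\dot H^{1/2}$ boundary data, but the Caccioppoli route keeps everything quantitative and dimensionally transparent.) Combining, the right-hand side of \eqref{eqn:first_order_inequality} is $\lesssim \rho^{-3} + \rho^{-2}\cdot\rho(E_h(m)+1) = \rho^{-3} + \rho^{-1}(E_h(m)+1) \lesssim \rho^{-1}(E_h(m)+1)$ for $\rho \ge 1$. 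Hmm — this gives only $\int_{B_{\rho/4}}(m_1-h)^2 \lesssim \rho^{-1}(E+1)$, i.e.\ an averaged bound of order $\rho^{-2}$ for $(m_1-h)^2$, hence $\rho^{-1}$ pointwise after interpolation, which is better than $\rho^{-3/4}$ squared $=\rho^{-3/2}$... so in fact this crude route is more than enough; the stated exponent $-3/4$ is deliberately non-optimal, presumably because a cleaner/more uniform argument is wanted. So fourth: rescale to the unit interval by $\tilde m(s) = m(t+\rho s)$ for $s\in(-1/4,1/4)$; on this interval one gets $\|\tilde m_1 - h\|_{L^2}^2$ and $\|\tilde m'\|_{L^2}^2$ bounded by a constant times $(E_h(m)+1)$ (using the scaling of the three energy terms), and then the one-dimensional Sobolev embedding $\|f\|_{L^\infty(-1/8,1/8)}^2 \lesssim \|f\|_{L^2}\|f'\|_{L^2} + \|f\|_{L^2}^2$ applied to $f = \tilde m_1 - h$, combined with undoing the scaling, yields $|m_1(t)-h|^2 \lesssim \rho^{-3/2}(E_h(m)+1)$, i.e.\ the claim with exponent $3/4$.

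\textbf{Main obstacle.} The delicate point is Step three: estimating the stray-field remainder term $\int_{\R_+^2} v^2\eta^2|\nabla\eta|^2\,dx$ in \eqref{eqn:first_order_inequality}, because $v$ is defined only up to an additive constant in $\dot H^1(\R_+^2)$ and the naive bound would pick up the wrong constant. One must pin down the constant correctly — here the right choice is the one making $v|_{\R\times\{0\}} = m_1-h$, which decays at $\pm\infty$ — and then argue, via a Caccioppoli inequality on half-balls centered at $(t,0)$ together with the trace of $m_1-h$ on the boundary, that $v$ does not grow: $\frac1{\rho^2}\int_{B_\rho^+}v^2 \lesssim \int_{\R_+^2}|\nabla v|^2 + \int_{\R}(m_1-h)^2 \lesssim E_h(m)+1$, uniformly in $\rho$ and in the center $t$. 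This uniformity is what makes the final estimate valid for \emph{all} $t\in\R$ with a single constant $C$. Everything else — the choice of $\eta$, the scaling bookkeeping, the Sobolev embedding — is routine once this is in place; and one should double-check the degenerate case where $\sigma(t)=1$ (i.e.\ $t$ is itself close to a zero of $m_2$), where the estimate reduces to the trivial bound $|m_1(t)-h| \le 1+h \le C\sqrt{E_h(m)+1}$ since $|m_1-h|$ is bounded by $2$.
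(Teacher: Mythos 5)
The proposal's opening steps match the paper's: apply \eqref{eqn:first_order_inequality} with a cutoff at scale $\sigma(t)$, then bound the stray-field remainder $\int v^2\eta^2|\nabla\eta|^2\,dx$ by a quantity $\lesssim \rho^{-1}(E_h(m)+1)$ (the paper does this via the explicit Poisson-kernel estimate of Lemma~\ref{lem:L^2-estimate_for_v}; your Caccioppoli/Poincar\'e route would work too, though it is harder to make rigorous because $v$ is only determined up to a constant, and the paper's convolution argument is cleaner). Both arrive at
$\int_{t-R}^{t+R}\bigl(|m'|^2 + (m_1-h)^2\bigr)\,dx_1 \lesssim R^{-1}(E_h(m)+1)$.

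The gap is in Steps~3--4. You then argue that this alone, fed into a one-dimensional Sobolev embedding, already gives what you need. But the inequality direction is backwards in your reasoning: you write that a pointwise bound $|m_1(t)-h|^2 \lesssim \rho^{-1}(E_h(m)+1)$ is ``better than $\rho^{-3/4}$ squared $=\rho^{-3/2}$,'' whereas in fact $\rho^{-1} > \rho^{-3/2}$ for $\rho>1$, so the crude bound is \emph{weaker} than the claim, not stronger. Concretely, with only $\|m_1-h\|_{L^2(t-R,t+R)}^2 \lesssim R^{-1}(E+1)$ and $\|m_1'\|_{L^2(t-R,t+R)}^2 \lesssim R^{-1}(E+1)$, your rescaled Gagliardo--Nirenberg inequality yields at best $|m_1(t)-h| \lesssim R^{-1/2}\sqrt{E_h(m)+1}$, which falls short of the exponent $-3/4$. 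To reach $-3/4$, one needs a \emph{better} bound on $\int(m_1')^2$, and this is precisely what the paper obtains from the second-order estimate \eqref{eqn:second_order_inequality}: choosing the same cutoff there gives $\int_{t-R}^{t+R}(m_1')^2\,dx_1 \lesssim R^{-2}E_h(m)$. With this improved derivative bound, the oscillation argument the paper uses (or, equivalently, your rescaling-plus-Sobolev route, now with $\|\tilde m_1'\|_{L^2}^2 \lesssim \rho^{-1}(E+1)$) does deliver $|m_1(t)-h|\lesssim R^{-3/4}\sqrt{E_h(m)+1}$. The use of \eqref{eqn:second_order_inequality} is therefore not optional, and its omission is what breaks the proposal.

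One further minor caveat: the ``undoing the scaling'' phrase in Step~4 is misleading, since the sup norm of $\tilde m_1 - h$ on the rescaled interval equals that of $m_1 - h$ on the original interval, so no additional $\rho$-factor appears from the change of variables; the $\rho$-dependence must come entirely from the scaling of the $L^2$ norms, which is another hint that the correct bookkeeping requires the sharper bound on $\int(m_1')^2$.
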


\begin{proof}
Let $v$ denote the solution of \eqref{eqn:v_harmonic}, \eqref{eqn:dual_boundary_condition}.

As $|m_1 - h|\leq 2$, it suffices to consider $t \in \R$ with $\sigma(t) \ge 4$. Let $R = \frac{\sigma(t)}{4}$ (so $R\geq 1$).
Choosing a suitable cut-off function $\eta$ in \eqref{eqn:first_order_inequality} in Proposition \ref{prop:local_energy_estimates},
we see that
\[
\int_{t - R}^{t + R} \left(|m'|^2 + (m_1 - h)^2\right) \, dx_1 \le \frac{C_1 \pi^2}{32R^2} \int_{\R \times (0, R)} v^2 \, dx + \frac{C_1}{R^3}
\]
for a universal constant $C_1$.

The boundary value problem \eqref{eqn:v_harmonic}, \eqref{eqn:dual_boundary_condition} gives rise
to some estimates for $v$ with standard tools. In particular,
according to Lemma \ref{lem:L^2-estimate_for_v} in the appendix (applied for $p = 2$),
\[
\begin{split}
\int_{\R \times (0, R)} v^2 \, dx \le \frac{16 R}{\pi^2} \int_{-\infty}^\infty (m_1 - h)^2 \, dx_1.
\end{split}
\]
This, combined with the above inequality, yields
\begin{equation} \label{eqn:energy_estimate_away_from_walls}
\int_{t - R}^{t + R} \left(|m'|^2 + (m_1 - h)^2\right) \, dx_1 \le \frac{C_1}{R}(E_h(m) + 1).
\end{equation}
Moreover, estimate \eqref{eqn:second_order_inequality} in Proposition \ref{prop:local_energy_estimates} leads to 
\[
\int_{t - R}^{t + R} (m_1')^2 \, dx_1 \le \frac{C_2}{2R^2} E_h(m)
\]
for a universal constant $C_2$. In particular, we deduce that
\be
\label{oscil}
\left(\osc_{[t - \sqrt{R}, t + \sqrt{R}]} m_1\right)^2 \le \left(\int_{t - \sqrt{R}}^{t + \sqrt{R}} |m_1'| \, dx_1\right)^2 \le \frac{C_2}{R^{3/2}} E_h(m).
\ee

Set
\[
C_0 = \sqrt{\frac{C_1}{2}} + \sqrt{C_2}.
\]
We claim that
\[
|m_1(t) - h| \le \frac{C_0}{R^{3/4}} \sqrt{E_h(m) + 1},
\]
which will conclude the proof. Indeed, if we had the inequality
\[
|m_1(t) - h| > \frac{C_0}{R^{3/4}} \sqrt{E_h(m) + 1},
\]
then it would follow from \eqref{oscil} that
\[
|m_1(x_1) - h| > \frac{C_0 - \sqrt{C_2}}{R^{3/4}} \sqrt{E_h(m) + 1} = \sqrt{\frac{C_1 (E_h(m) + 1)}{2R^{3/2}}}
\]
for all $x_1 \in [t - \sqrt{R}, t + \sqrt{R}]$. Hence
\[
\int_{t - \sqrt{R}}^{t + \sqrt{R}} (m_1 - h)^2 \, dx_1 > \frac{C_1}{R}(E_h(m) + 1),
\]
which contradicts \eqref{eqn:energy_estimate_away_from_walls}.
\end{proof}

\subsection{A preliminary $L^1$-estimate}

Eventually we want to estimate the $L^1$-norms of the positive and negative parts of $m_1 - h$.
The preceding inequality is not good enough for this purpose, but we will use it as a first step.
In the next step (Lemma \ref{lem:decay_estimate}), we derive an $L^2$-estimate of $m_1 - h$, before turning it into an $L^1$-estimate in Proposition \ref{prop:L^1-estimate} below.

\begin{lemma} \label{lem:decay_estimate}
Let $p \in (\frac{4}{3}, 2]$. Then there exists a number $C > 0$ with the following property.
Let $h \in [0, 1]$ and $d \in \Z \pm \{0, \alpha/\pi\}$. Suppose that $m \in \A_h(d)$
is a minimiser of $E_h$ in $\A_h(d)$. Let $a_1, a_2 \in \R \cup \{\pm \infty\}$ with
$a_1 < a_2$ such that $m_2 \not= 0$ in $(a_1, a_2)$. Then
\[
\int_{a_1 + R}^{a_2 - R} \left(|m'|^2 + (m_1 - h)^2\right) \, dx_1 \le C (|d| + 1)^{2/p}R^{-2/p} (E_h(m) + 1)
\]
for all $R \ge 1$.
\end{lemma}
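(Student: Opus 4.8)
The goal is to upgrade the preliminary $L^\infty$-decay of Lemma~\ref{lem:preliminary_decay_estimate} into a localised $L^2$-energy decay on an interval $(a_1,a_2)$ where $m_2$ does not vanish, with a rate $R^{-2/p}$ that is almost as good as $R^{-1}$ (as $p\to 4/3^+$). The strategy is to exploit the interior estimate \eqref{eqn:first_order_inequality} of Proposition~\ref{prop:local_energy_estimates} more carefully than in Lemma~\ref{lem:preliminary_decay_estimate}: there the bulk term $\int v^2\eta^2|\nabla\eta|^2$ was controlled by the \emph{global} $L^2$-norm of $m_1-h$, which only gives $R^{-1}(E_h(m)+1)$ \emph{without} the improvement. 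To do better I would feed the better pointwise decay back into the half-space estimate for $v$.

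\textbf{Step 1: better bound on $v$ in a slab.} Let $t$ be the midpoint of $(a_1,a_2)$ and suppose first $a_1,a_2$ finite; write $2R_0 = a_2-a_1$. For $x_1\in(a_1,a_2)$ we have $\sigma(x_1)\ge 1+\min(x_1-a_1,a_2-x_1)$, so Lemma~\ref{lem:preliminary_decay_estimate} gives $|m_1(x_1)-h|\le C(1+\operatorname{dist}(x_1,\{a_1,a_2\}))^{-3/4}\sqrt{E_h(m)+1}$. Outside $(a_1,a_2)$ I only have the crude bound $|m_1-h|\le 2$ together with $\|m_1-h\|_{L^2}^2\le 2E_h(m)$. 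Using the explicit Poisson-kernel representation of $v$ as the harmonic extension of $m_1-h$ (or Lemma~\ref{lem:L^2-estimate_for_v} in the appendix applied to pieces), I would estimate $\int_{(t-R,t+R)\times(0,R)} v^2\,dx$ by splitting the boundary data into its restriction to, say, $(t-2R,t+2R)$ and the complement. The near part contributes $\lesssim R\cdot\sup_{|x_1-t|<2R}|m_1-h|^2 \lesssim R\cdot R^{-3/2}(E_h(m)+1)$ by the $L^\infty$-decay, provided $R\le R_0/2$ say; the far part contributes $\lesssim R^{?}$ times the tail $L^2$-mass, which by Poisson-kernel decay is $\lesssim R^{-1}$-type and absorbable. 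Combining, $\int_{\R\times(0,R)}v^2\,dx \lesssim R^{-1/2}(E_h(m)+1)$ up to the scale $R_0$. Plugging this into \eqref{eqn:first_order_inequality} with a cut-off supported in $(t-R,t+R)$ and $|\eta'|\lesssim 1/R$ yields
\[
\int_{t-R/2}^{t+R/2}\Big(\tfrac12|m'|^2+(m_1-h)^2\Big)\,dx_1 \lesssim R^{-3} + R^{-2}\cdot R^{-1/2}(E_h(m)+1) \lesssim R^{-5/2}(E_h(m)+1).
\]
This is already better than $R^{-1}$, and one should then \emph{bootstrap}: reinsert the improved local $L^2$-bound into the half-space estimate to get a further improvement, each iteration multiplying the exponent. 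The exponent $2/p$ with $p\in(4/3,2]$, i.e.\ ranging in $[1,3/2)$, is exactly the fixed point one reaches; the constraint $p>4/3$ (equivalently exponent $<3/2$) reflects that a single application of the $3/4$-power $L^\infty$-decay caps the slab estimate at the $R^{-3/2}$ scale before iteration, and iterating recovers everything short of that.

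\textbf{Step 2: iteration / choice of $p$ and assembling the interval.} More systematically, I would prove by induction on $k$ a statement of the form $\int_{a_1+R}^{a_2-R}(|m'|^2+(m_1-h)^2)\,dx_1\le C_k (|d|+1)^{?}R^{-\beta_k}(E_h(m)+1)$ with $\beta_0=1$ (this base case is \eqref{eqn:energy_estimate_away_from_walls} from Lemma~\ref{lem:preliminary_decay_estimate}, summed over a Vitali-type cover of $(a_1+R,a_2-R)$ by intervals of length comparable to their distance from the endpoints — here the factor $(|d|+1)$ enters because the number of points where $m_2=0$ is $\le 2|d|$ by Lemma~\ref{lem:no_unaccounted_walls}(i), and between two consecutive such points the same estimate applies) and $\beta_{k+1}=\min(3/2,\,\tfrac12+\tfrac12\beta_k)$ or similar, so $\beta_k\uparrow 3/2$. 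Given $p>4/3$, pick $k$ with $\beta_k\ge 2/p$ and set $C=C_k$. The case $a_1=-\infty$ or $a_2=+\infty$ is handled the same way, since on $(R,\infty)$ (after translating) one still has the two-sided decay from the finite endpoint plus the far-field $L^2$-control, and $E_h(m)<\infty$ forces $m_1-h\to 0$; in fact this case is easier because there is only one ``wall'' cluster to the side.

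\textbf{Step 3: the degree-dependent constant and the $L^2$-tail of $v$.} The cleanest place to see where $(|d|+1)^{2/p}$ comes from is the covering argument: the interval $(a_1+R,a_2-R)$ is covered by $O(\log)$ dyadic annuli around each endpoint, on each of which Lemma~\ref{lem:preliminary_decay_estimate} and \eqref{eqn:energy_estimate_away_from_walls} give a geometrically decaying contribution, but when summing the half-space term $\int v^2\eta^2|\nabla\eta|^2$ one must control $v$ globally, and $v$ sees \emph{all} $\le 2|d|$ walls; crudely, $\|m_1-h\|_{L^2}^2\le 2E_h(m)$ already, so actually the $(|d|+1)$ factor is mostly a book-keeping device to make the far-field Poisson estimates uniform across the up-to-$2|d|$ monotonicity intervals and can be absorbed with room to spare — hence the generous power $2/p$ rather than $1$.

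\textbf{Main obstacle.} The delicate point is Step~1: getting the half-space bound $\int_{\R\times(0,R)}v^2\,dx \lesssim R^{-\gamma}(E_h(m)+1)$ with $\gamma>0$ \emph{honestly}, because $v$ is nonlocal and its value over the slab $\R\times(0,R)$ over the interval $(t-R,t+R)$ genuinely depends on the boundary data far away, where I have no decay — only the global bound $\|m_1-h\|_{L^2}^2\le 2E_h(m)$. Making the far-field contribution small requires a quantitative Poisson-kernel tail estimate showing that the influence of boundary data at distance $\gtrsim R$ on $\int_{(t-R,t+R)\times(0,R)}v^2$ is suppressed by a negative power of $R$; combined with the fact that the slab has thickness exactly $R$ (not larger), this is plausible but must be done with care, and it is the crux on which the whole bootstrap — and the precise threshold $p>4/3$ — rests.
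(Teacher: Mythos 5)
Your proposal correctly identifies the main difficulty (controlling the slab integral of the nonlocal harmonic extension $v$) and the two right ingredients (the $L^\infty$-decay of Lemma~\ref{lem:preliminary_decay_estimate} and the slab estimate of Lemma~\ref{lem:L^2-estimate_for_v}), but it misses the one observation that makes the paper's proof short and non-iterative: the $3/4$-power pointwise decay gives a \emph{global} $L^p$-bound for any $p>4/3$, namely $\|m_1-h\|_{L^p(\R)}\le C(|d|+1)^{1/p}\sqrt{E_h(m)+1}$ because $\sigma^{-3p/4}$ is integrable iff $3p/4>1$ and there are at most $2|d|+1$ zeros of $m_2$. Lemma~\ref{lem:L^2-estimate_for_v} (stated for general $p\in(1,2]$, not just $p=2$) then controls the slab integral by $C\,R^{2-2/p}\|m_1-h\|_{L^p}^2$, and plugging this once into \eqref{eqn:first_order_inequality} with $|\nabla\eta|\lesssim 1/R$ yields the factor $R^{-2/p}$ directly. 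No bootstrap, no splitting of the boundary data into near and far parts: the Young-convolution proof of the slab estimate already handles the nonlocality in terms of the global $L^p$-norm, which is exactly why the threshold $p>4/3$ appears.

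The bootstrap you propose would not close anyway: the recursion $\beta_{k+1}=\tfrac12+\tfrac12\beta_k$ has fixed point $\beta=1$, so starting from $\beta_0=1$ it stalls and never reaches $2/p>1$. Your Step 1 arithmetic is also off by a factor of $R$: with $\|f_{\mathrm{near}}\|_{L^2}^2\lesssim R\cdot R^{-3/2}(E_h(m)+1)$, the $p=2$ slab bound gives $\int_{\R\times(0,R)}v_{\mathrm{near}}^2\lesssim R\cdot R^{-1/2}=R^{1/2}$ (not $R^{-1/2}$), which after the $|\nabla\eta|^2\sim R^{-2}$ factor gives $R^{-3/2}$ — consistent with the supremum of $2/p$ over $p>4/3$ — rather than the $R^{-5/2}$ you wrote, which would be strictly better than the lemma claims. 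In short, the missing idea is to feed the $L^\infty$-decay into a global $L^p$-bound and use the $p<2$ version of the slab estimate in a single pass, rather than to iterate the $L^2$/$L^\infty$ estimates.
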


\begin{proof}
As in the proof of Proposition \ref{prop:local_energy_estimates} and Lemma \ref{lem:preliminary_decay_estimate},
let $v$ denote the harmonic extension of $m_1 - h$ to $\R_+^2$.
Then by Lemma \ref{lem:L^2-estimate_for_v} in the appendix, there exists a constant $C_1 = C_1(p)$ such that
\[
\int_0^R \int_{-\infty}^\infty v^2 \, dx_1 \, dx_2 \leq C_1 R^{2 - 2/p} \|m_1 - h\|_{L^p(\R)}^2.
\]

Assuming that we have a cut-off function $\eta \in C_0^\infty(\R^2)$ with $0 \le \eta \le 1$
such that $\supp \eta \subseteq \R \times (-\infty, R]$ and
$\supp \eta(\blank, 0) \subseteq (a_1, a_2)$, then \eqref{eqn:first_order_inequality} in 
Proposition \ref{prop:local_energy_estimates} implies that
\begin{multline*}
\int_{-\infty}^\infty \eta^4 \left(\frac{1}{2} |m'|^2 + (m_1 - h)^2\right) \, dx_1 + \int_{\R_+^2} \eta^4 |\nabla v|^2 \, dx \\
\le C_2 R^{2 - 2/p} \|\nabla \eta\|_{L^\infty(\R_+^2)}^2 \|m_1 - h\|_{L^p(\R)}^2
+ 576 \|\eta'\|_{L^4(\R)}^4,
\end{multline*}
where $C_2 = 16 C_1$.
A suitable choice of $\eta$ therefore gives rise to a number $C_3$, depending only
on $p$, such that
\[
\int_{a_1 + R}^{a_2 - R} \left(|m'|^2 + (m_1 - h)^2\right) \, dx_1 \le C_3 R^{-2/p} \|m_1 - h\|_{L^p(\R)}^2 + C_3 R^{-3}.
\]
Furthermore, since $p>\frac43$, the function $\sigma$ in Lemma \ref{lem:preliminary_decay_estimate} has
the property that $\sigma^{-3p/4}$ is integrable
over $\R$. Lemma \ref{lem:no_unaccounted_walls} and Lemma \ref{lem:preliminary_decay_estimate} then imply that
\[
\|m_1 - h\|_{L^p(\R)} \le C_4 (|d| + 1)^{1/p} \sqrt{E_h(m) + 1}
\]
for some constant $C_4 = C_4(p)$. Thus we obtain a constant $C_5 = C_5(p)$ such that
\[
\int_{a_1 + R}^{a_2 - R} \left(|m'|^2 + (m_1 - h)^2\right) \, dx_1 \le C_5 (|d| + 1)^{2/p} R^{-2/p} (E_h(m) + 1).
\]
This is the desired estimate.
\end{proof}

The preceding result implies an $L^1$-estimate for
the positive and negative parts of $m_1 - h$. In the following, we write $(m_1 - h)_+ = \max\{m_1 - h, 0\}\geq 0$
and $(m_1 - h)_- = \min\{m_1 - h, 0\}\leq 0$.

\begin{proposition} \label{prop:L^1-estimate}
Let $p \in (\frac{4}{3}, 2)$. Then there exists a number $C > 0$ with the following property.
Let $h \in [0, 1]$. Given $d \in \Z \pm \{0, \alpha/\pi\}$,
suppose that $m \in \A_h(d)$ is a minimiser of $E_h$ in $\A_h(d)$. Then
\[
\int_{-\infty}^\infty (m_1 - h)_+ \, dx_1 \le C (|d| + 1)^{1 + 1/p} \alpha^{2(2 - p)/(2 + p)} \sqrt{E_h(m) + 1}
\]
and
\[
-\int_{-\infty}^\infty (m_1 - h)_- \, dx_1 \le C (|d| + 1)^{1 + 1/p} \sqrt{E_h(m) + 1}.
\]
\end{proposition}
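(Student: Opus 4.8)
The plan is to derive both bounds from the $L^2$-type decay estimate of Lemma~\ref{lem:decay_estimate} combined with the pointwise bound of Lemma~\ref{lem:preliminary_decay_estimate}, the essential difference being that the positive part $(m_1-h)_+$ is genuinely small (of size $\sim\alpha$) near the wells, whereas the negative part $(m_1-h)_-$ is only controlled by the generic decay rate. First I would fix the points $a_1<\dots<a_N$ where $m_1=\pm1$, with $N$ given by Lemma~\ref{lem:no_unaccounted_walls}, and split $\R$ into the intervals $(a_n,a_{n+1})$ (together with the two unbounded ends). On each such interval $m_2\neq0$, so Lemma~\ref{lem:decay_estimate} applies and gives, for $R\ge1$,
\[
\int_{a_n+R}^{a_{n+1}-R}\bigl(|m'|^2+(m_1-h)^2\bigr)\,dx_1\le C(|d|+1)^{2/p}R^{-2/p}(E_h(m)+1).
\]
Near each endpoint $a_n$ I would instead use the crude bound $|m_1-h|\le2$ on a window of size $O(1)$, or rather integrate $|m_1-h|$ against the pointwise estimate $|m_1(t)-h|\le C\sigma(t)^{-3/4}\sqrt{E_h+1}$ from Lemma~\ref{lem:preliminary_decay_estimate}; since $p>\tfrac43$ we have $3p/4>1$ so $\sigma^{-3p/4}$ is integrable, which already yields the $L^p$-bound $\|m_1-h\|_{L^p}\le C(|d|+1)^{1/p}\sqrt{E_h+1}$ recorded inside the proof of Lemma~\ref{lem:decay_estimate}.

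For the negative part the argument is then essentially Cauchy–Schwarz plus the decay estimate: on the far region $\{|x_1-a_n|\ge R\ \forall n\}$ one writes
\[
\int (m_1-h)_-\,\mathbf 1_{\{\text{far}\}}\,dx_1
\ge -\Bigl(\text{meas}\Bigr)^{1/2}\Bigl(\int(m_1-h)^2\Bigr)^{1/2},
\]
but it is cleaner to sum the interval estimates dyadically: decompose $(a_n,a_{n+1})$ into $\{R\le|x_1-a_n|<2R\}$-type shells, apply Lemma~\ref{lem:decay_estimate} on each, take square roots, and sum a geometric-type series in $R=2^k$, whose exponent $-1/p<0$ guarantees convergence. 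Together with the $O(1)$-sized contribution near the $N\sim|d|$ endpoints (handled by the $L^1$ consequence of Lemma~\ref{lem:preliminary_decay_estimate}), this produces
\[
-\int_{-\infty}^\infty (m_1-h)_-\,dx_1\le C(|d|+1)^{1+1/p}\sqrt{E_h(m)+1}.
\]

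For the positive part one needs the extra gain $\alpha^{2(2-p)/(2+p)}$. The point is that $(m_1-h)_+\le m_1-h$ can only be positive where $m_1>h=\cos\alpha$, i.e. where the phase $\phi$ is within $O(\alpha)$ of a multiple of $2\pi$; near such a well the potential $W=(m_1-h)^2$ is comparable to $(\phi-2\pi k)^2$ and in fact $(m_1-h)_+\lesssim\alpha^2$ pointwise once $|\phi-2\pi k|\lesssim\alpha$. I would interpolate: on a region of length $\Lambda$ around a positive well, $\int(m_1-h)_+\le \Lambda^{1/2}\bigl(\int(m_1-h)^2\bigr)^{1/2}$ by Cauchy–Schwarz, and Lemma~\ref{lem:decay_estimate} bounds the $L^2$ part; but I also have the competing bound $\int(m_1-h)_+\lesssim \alpha^2\Lambda$ from the pointwise smallness, at least until the profile leaves the well. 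Optimising the choice of the cut-off scale (equivalently, choosing where to switch from the pointwise bound to the $L^2$ decay bound) against the free parameter $R$ in Lemma~\ref{lem:decay_estimate} — balancing a term like $\alpha^2 R$ against $(|d|+1)^{1/p}R^{1/2-1/p}\sqrt{E_h+1}$ — produces a power of $\alpha$; tracking the exponents through $p\in(\tfrac43,2)$ gives precisely $\alpha^{2(2-p)/(2+p)}$.

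The main obstacle I expect is this last optimisation for $(m_1-h)_+$: one has to make rigorous the claim that $(m_1-h)_+$ is pointwise $O(\alpha^2)$ except on a set whose length is controlled by the energy, which requires relating the position of $\phi$ relative to the wells to the local energy — essentially a continuity/oscillation argument using the $H^1$-estimate $\int_{t-\sqrt R}^{t+\sqrt R}(m_1')^2$ from Lemma~\ref{lem:preliminary_decay_estimate} (or equation \eqref{oscil}) to say that once $m_1$ exceeds $h$ by a fixed amount it cannot return too quickly, so it must spend enough ``length'' to absorb a definite amount of anisotropy energy. Getting the bookkeeping of the $N\sim|d|$ wells, the dyadic sums, and the $\alpha$-balancing to combine into the stated clean exponents $1+1/p$ and $2(2-p)/(2+p)$ is where the care lies; the rest is Cauchy–Schwarz and geometric series.
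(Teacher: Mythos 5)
Your overall strategy matches the paper's: split $\R$ at the points $a_1,\dots,a_N$ (with $N\lesssim |d|+1$ by Lemma~\ref{lem:no_unaccounted_walls}), use the $L^2$ decay of Lemma~\ref{lem:decay_estimate} on each subinterval, convert that decay into an $L^1$ bound on the tails (the paper does this through Lemma~\ref{lem:decay_implies_L^1_estimate} in the appendix, which packages exactly the dyadic-shell/Cauchy--Schwarz computation you describe), and for the positive part optimise the cut-off radius $R$ against the pointwise smallness of $(m_1-h)_+$; the balancing $R=\alpha^{-4p/(2+p)}$ does produce $\alpha^{2(2-p)/(2+p)}$, just as you computed.

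The one place where you go wrong is in identifying the ``main obstacle''. You propose to argue that $(m_1-h)_+\lesssim\alpha^2$ \emph{only near the wells} (on a set whose length is controlled by the energy), and you flag the oscillation argument needed to make this rigorous as the hard part. In fact no such argument is needed: since $m$ takes values in $\Ss^1$ we have $m_1\le 1$ everywhere, hence
\[
(m_1-h)_+\le 1-h=1-\cos\alpha\le\tfrac{\alpha^2}{2}
\]
pointwise on \emph{all} of $\R$, not just near the wells. The paper simply invokes this trivial global bound on the near region $(a_n-R,a_n+R)$ and the $L^1$ tail estimate on the far region, and the $R$-optimisation then gives the stated exponent directly. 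There is no need to track where the phase $\phi$ sits relative to $2\pi\Z$, no need for the $H^1$-oscillation estimate \eqref{oscil} beyond what is already inside Lemma~\ref{lem:preliminary_decay_estimate}, and no continuity argument about how quickly $m_1$ can leave the well. So your plan is correct but you anticipated a difficulty that isn't there; once you notice $(m_1-h)_+\le 1-h$ holds globally, your sketch collapses to essentially the paper's proof.

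One further small remark: for $\alpha>1$ (i.e.\ when $\alpha^2/2$ is no longer a useful bound) the first inequality is trivial because $|m_1-h|\le 2$, and the paper handles it by taking $R=1$. You should make sure this case is explicitly covered, since the balancing $R=\alpha^{-4p/(2+p)}$ would otherwise give $R<1$, outside the range where Lemma~\ref{lem:decay_estimate} applies.
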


\begin{proof}
Let $C_1$ be the constant satisfying the statement of Lemma \ref{lem:decay_estimate}
for the given value of $p$ and define
\[
c_0 = \frac{1}{\sqrt{C_1 (|d| + 1)^{2/p} (E_h(m) + 1)}}.
\]
By Lemma \ref{lem:no_unaccounted_walls}, there exist $a_1, \dotsc, a_N \in \R$ with
$a_1 < \dotsb < a_N$ and $N \le 2|d| + 1$ such that $m_2(a_n) = 0$ for $n = 1, \dotsc, N$
and $m_2 \not= 0$ in $\R \setminus \{a_1, \dotsc, a_N\}$. Set $a_0=-\infty$ and $a_{N+1}=+\infty$. Fix $n \in \{1, \dotsc, N\}$.

Lemma \ref{lem:decay_estimate} gives $L^2$-estimates for $m_1 - h$ at first,
but using it for varying values of $R$, we can derive an $L^1$-estimate as well.
The details are given in Lemma \ref{lem:decay_implies_L^1_estimate} in the appendix.
We apply this Lemma \ref{lem:decay_implies_L^1_estimate} to the functions
\[
\psi_1(x_1) = \begin{cases}
c_0 |m_1(a_n + x_1) - h| & \text{if $1 \le x_1 \le (a_{n + 1} - a_n)/2$}, \\
0 & \text{if $x_1 > (a_{n + 1} - a_n)/2$},
\end{cases}
\]
and
\[
\psi_2(x_1) = \begin{cases}
c_0 |m_1(a_n - x_1) - h| & \text{if $1 \le x_1 \le (a_n - a_{n - 1})/2$}, \\
0 & \text{if $x_1 > (a_n - a_{n - 1})/2$},
\end{cases}
\]
and use Lemma \ref{lem:decay_estimate} to verify that the hypothesis of Lemma \ref{lem:decay_implies_L^1_estimate}
is satisfied for $\sigma = 2/p$. (This is why we assume $p<2$ here, in contrast to Lemma \ref{lem:decay_estimate}.)
Therefore, there exists a constant $C_2 = C_2(p)$
such that
\[
\int_{(a_n + a_{n - 1})/2}^{a_n - R} |m_1 - h| \, dx_1 \le \frac{C_2R^{1/2 - 1/p}}{c_0}
\]
and
\[
\int_{a_n + R}^{(a_{n + 1} + a_n)/2} |m_1 - h| \, dx_1 \le \frac{C_2R^{1/2 - 1/p}}{c_0}
\]
for all $R \ge 1$.

If $\alpha \le 1$, then we choose $R = \alpha^{-4p/(2 + p)}$. Using the fact that  
$(m_1 - h)_+ \le 1 - h \le \frac{\alpha^2}{2}$
in $(a_n-R, a_n+R)$, we then obtain the estimate
\[
\int_{(a_n + a_{n - 1})/2}^{(a_{n + 1} + a_n)/2} (m_1 - h)_+ \, dx_1 \le \left(\frac{2C_2}{c_0} + 1\right) \alpha^{2(2 - p)/(2 + p)}.
\]
Then it suffices to sum over $n = 1, \dotsc, N$ to prove the first inequality.

For $\alpha > 1$ and for the second inequality, we can use the same arguments with $R = 1$, since 
$|m_1 - h|\leq 2$ in $(a_n-R, a_n+R)$.
\end{proof}

\subsection{Improved estimates}

With this control of the $L^1$-norm, we can now take advantage of the second
inequality in Lemma \ref{lem:L^2-estimate_for_v} and improve the estimates again.

\begin{proposition} \label{prop:improved_decay}
There exists a number $C > 0$ with the following property.
Let $h \in [0, 1]$ and suppose that $m \in \A_h(d)$
is a minimiser of $E_h$ in $\A_h(d)$ for some $d \in \Z \pm \{0, \alpha/\pi\}$. Let $u \in \dot{H}^1(\R_+^2)$ denote the
solution of \eqref{eqn:u_harmonic_higher}, \eqref{eqn:boundary_condition_higher},
and let $a_1, a_2 \in \R \cup \{\pm \infty\}$ with $a_1 < a_2$ and $m_2 \not= 0$ in $(a_1, a_2)$.  Then
\begin{multline} \label{eqn:improved_first_order_inequality}
\int_{a_1 + R}^{a_2 - R} \left(|m'|^2 + (m_1 - h)^2\right) \, dx_1 + \int_{a_1 + R}^{a_2 - R} \int_0^R |\nabla u|^2 \, dx_2 \, dx_1 \\
\le C(|d| + 1)^{16/5}R^{-2} \log R \, (E_h(m) + 1)
\end{multline}
and
\begin{multline} \label{eqn:improved_second_order_inequality}
\int_{a_1 + R}^{a_2 - R} \left(|m''|^2 + (m_1')^2 + \frac{|m'|^4}{m_2^2}\right) \, dx_1 + \int_{a_1 + R}^{a_2 - R} \int_0^R |\nabla^2 u|^2 \, dx_2 \, dx_1 \\
\le C(|d|+1)^{16/5}R^{-4} \log R \, (E_h(m) + 1)
\end{multline}
and
\begin{equation} \label{eqn:improved_inequality_for_u'}
\int_{a_1 + R}^{a_2 - R} (u'(x_1, 0))^2 \, dx_1 \le C(|d| + 1)^{16/5} R^{-3} \log R \, (E_h(m) + 1)
\end{equation}
for all $R \ge 2$.
\end{proposition}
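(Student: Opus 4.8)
\textit{The plan.} Write $g = m_1 - h$ and let $v$ be the harmonic extension of $g$ to $\R_+^2$, as in Lemma~\ref{lem:decay_estimate}. The point is that Lemma~\ref{lem:decay_estimate} is proved by feeding an $L^p$-control of $g$ through the first inequality of Lemma~\ref{lem:L^2-estimate_for_v} into \eqref{eqn:first_order_inequality}; to do better one replaces the $L^p$-norm by the $L^1$-norm, which is now available from Proposition~\ref{prop:L^1-estimate}, and uses the \emph{second} inequality of Lemma~\ref{lem:L^2-estimate_for_v} (the endpoint version, bounding $\int_0^R\int_{\R} v^2$ by a constant times $(\log R)\|g\|_{L^1(\R)}^2$ up to a term of lower order in $R$ controlled by $\|g\|_{L^2(\R)}^2$). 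Fix $p = \tfrac53 \in (\tfrac43, 2)$ once and for all; Proposition~\ref{prop:L^1-estimate} together with the bound $\|g\|_{L^2(\R)}^2 \le C(|d|+1)(E_h(m)+1)$ contained in the proof of Lemma~\ref{lem:decay_estimate} (case $p=2$) then give
\[
\|g\|_{L^1(\R)}^2 \le C(|d|+1)^{2+2/p}(E_h(m)+1) = C(|d|+1)^{16/5}(E_h(m)+1), \qquad \|g\|_{L^2(\R)}^2 \le C(|d|+1)^{16/5}(E_h(m)+1).
\]
We may assume $a_2 - a_1 > 2R$, for otherwise $(a_1+R, a_2-R)=\emptyset$ and the inequalities are trivial.

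\textit{First inequality.} I would insert into \eqref{eqn:first_order_inequality} a product cut-off $\eta(x_1,x_2)=\eta_1(x_1)\eta_2(x_2)$ with $0\le\eta_i\le1$, $\eta_1\equiv1$ on $(a_1+R, a_2-R)$ and $\supp\eta_1\subseteq(a_1+\tfrac R2, a_2-\tfrac R2)$, $\eta_2\equiv1$ on $(0,R)$ and $\supp\eta_2\subseteq(-\infty,2R)$, so that $\|\nabla\eta\|_{L^\infty(\R_+^2)}\le C/R$, $\int_{\R}(\eta')^4\,dx_1\le CR^{-3}$, $\supp\nabla\eta\subseteq\R\times(0,2R)$, and $\supp\eta(\blank,0)\subseteq(a_1,a_2)$ (so Proposition~\ref{prop:local_energy_estimates} applies, $m$ being a critical point with $g\in\dot H^{1/2}(\R)$). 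Bounding the left-hand side of \eqref{eqn:first_order_inequality} below by the localised integrals and estimating $16\int_{\R_+^2} v^2\eta^2|\nabla\eta|^2\,dx \le CR^{-2}\int_0^{2R}\int_{\R} v^2\,dx_1\,dx_2$, the second inequality of Lemma~\ref{lem:L^2-estimate_for_v} together with the two displayed bounds and $R^{-3}\le R^{-2}\log R$ (for $R\ge2$) yields \eqref{eqn:improved_first_order_inequality}. If $(a_1,a_2)$ is unbounded one truncates $\eta_1$ far out and lets the truncation escape to $\pm\infty$; the extra terms vanish because $|m'|^2+g^2\in L^1(\R)$ and $v^2\in L^1(\R\times(0,2R))$.

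\textit{Second and third inequalities.} These I would get by bootstrapping. For \eqref{eqn:improved_second_order_inequality} insert a cut-off $\zeta$ of the same shape as $\eta$ into \eqref{eqn:second_order_inequality}. In $32\int_{\R}(\zeta')^2|m'|^2\,dx_1$ the factor $(\zeta')^2$ is $\le C/R^2$ and supported in $(a_1+\tfrac R2, a_2-\tfrac R2)$, so \eqref{eqn:improved_first_order_inequality} applied with radius $R/2$ bounds it by $C(|d|+1)^{16/5}R^{-4}\log R\,(E_h(m)+1)$ (for $2\le R<4$ one uses instead \eqref{eqn:first_order_inequality}, \eqref{eqn:second_order_inequality} with a fixed cut-off and Lemma~\ref{lem:decay_estimate}, since $R^{-4}\log R$ is then bounded below). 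In $24\int_{\R_+^2}|\nabla\zeta|^2|\nabla v|^2\,dx$ I would split $\supp\nabla\zeta$ into the part at heights $x_2\le R$, where the $x_1$-variable lies in $(a_1+\tfrac R2, a_2-\tfrac R2)$ and \eqref{eqn:improved_first_order_inequality} with radius $R/2$ again applies, and the part at heights $R<x_2<2R$, handled directly: since $g\in L^1(\R)$, the identity $\widehat v(\xi,x_2)=e^{-x_2|\xi|}\widehat g(\xi)$ and $|\widehat g|\le\|g\|_{L^1(\R)}$ give $\int_{\R}|\nabla v(x_1,x_2)|^2\,dx_1\le Cx_2^{-3}\|g\|_{L^1(\R)}^2$, hence $\int_R^{2R}\int_{\R}|\nabla v|^2\le CR^{-2}\|g\|_{L^1(\R)}^2$. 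Multiplying by $|\nabla\zeta|^2\le C/R^2$ and using the $L^1$-bound, both contributions are $\le C(|d|+1)^{16/5}R^{-4}\log R\,(E_h(m)+1)$, and \eqref{eqn:improved_second_order_inequality} follows (using $|\nabla u|=|\nabla v|$, $|\nabla^2 u|=|\nabla^2 v|$). Finally, for \eqref{eqn:improved_inequality_for_u'} I would write, for $x_1\in(a_1+R, a_2-R)$ and $0<s<R$, $u'(x_1,0)=u'(x_1,s)-\int_0^s\partial_{x_2}u'(x_1,t)\,dt$, average over $s\in(0,R)$ and use the Cauchy--Schwarz inequality to obtain $u'(x_1,0)^2\le\tfrac{C}{R}\int_0^R|\nabla u(x_1,s)|^2\,ds+CR\int_0^R|\nabla^2 u(x_1,t)|^2\,dt$, integrate over $x_1\in(a_1+R, a_2-R)$, and insert \eqref{eqn:improved_first_order_inequality} and \eqref{eqn:improved_second_order_inequality}; both resulting terms are of size $R^{-3}\log R$, giving \eqref{eqn:improved_inequality_for_u'}.

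\textit{Main obstacle.} The delicate point is the first inequality, or rather the observation that the $L^1$-estimate of Proposition~\ref{prop:L^1-estimate} (the quantitative form of the heuristic that the mass of $g$ concentrates near the walls) is exactly strong enough, when combined with the logarithmically sharp endpoint bound for $\int_0^R\int_{\R} v^2$, to upgrade the essentially-$R^{-3/2}$ decay of Lemma~\ref{lem:decay_estimate} to $R^{-2}\log R$; one gains nothing from $\|g\|_{L^p}$ with $p>1$, and a naive $p=1$ bound for $\int_0^R\int v^2$ without the logarithm is false. Once this input is in place the remaining estimates are a routine bootstrap, the only care being the contribution of the cut-offs at heights comparable to $R$, where one falls back on the direct Poisson-kernel estimate rather than on the variational inequalities of Proposition~\ref{prop:local_energy_estimates}.
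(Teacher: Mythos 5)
Your proof is correct and follows essentially the same route as the paper: insert a cut-off into Proposition~\ref{prop:local_energy_estimates}, feed the $L^1$-bound from Proposition~\ref{prop:L^1-estimate} (with $p=5/3$, giving the exponent $16/5$) through the logarithmic endpoint estimate of Lemma~\ref{lem:L^2-estimate_for_v}, and then bootstrap. The small technical variations are immaterial: you dispose of the strip $x_2\in(0,1/R)$ with the $p=2$ case of the first estimate of Lemma~\ref{lem:L^2-estimate_for_v}, where the paper invokes the third ($L^\infty$) one together with $|m_1-h|\le2$; for \eqref{eqn:improved_inequality_for_u'} you average the fundamental theorem of calculus in $x_2$ rather than integrating $\div(\eta^2\,\partial_{x_2}v\,\nabla v)$ by parts, which amounts to the same thing. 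You are also right that when plugging \eqref{eqn:improved_first_order_inequality} into the right-hand side of \eqref{eqn:second_order_inequality}, the contribution of $\supp\nabla\zeta$ at heights $x_2\sim R$ is not literally covered by \eqref{eqn:improved_first_order_inequality} as stated (which only reaches $x_2\le R/2$ after halving $R$); your Poisson-kernel bound $\int_{\R}|\nabla v(\cdot,x_2)|^2\,dx_1\le Cx_2^{-3}\|g\|_{L^1}^2$ fills this cleanly, though one can equally well note that the proof of \eqref{eqn:improved_first_order_inequality} gives, with a taller cut-off, the same bound for $\int_{a_1+R}^{a_2-R}\int_0^{cR}|\nabla v|^2$ for any fixed $c$.
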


\begin{proof}
For the proof of \eqref{eqn:improved_first_order_inequality}, we simply combine \eqref{eqn:first_order_inequality} in Proposition \ref{prop:local_energy_estimates} with
the second and third inequality of Lemma \ref{lem:L^2-estimate_for_v} and Proposition \ref{prop:L^1-estimate} (with
$p = 5/3$).
Choosing a suitable cut-off function $\eta$ in Proposition \ref{prop:local_energy_estimates}, such that
$\eta \equiv 1$ in $[a_1 + R, a_2 - R] \times [0, R]$,
\begin{multline*}
\supp \nabla \eta \cap \R_+^2 \subseteq \bigl([a_1 + R/2, a_1 + R] \times (0, 2R]\bigr) \cup {} \\
\cup \bigl([a_2 - R, a_2 - R/2] \times (0, 2R]\bigr) \cup \bigl([a_1 + R/2, a_2 - R/2] \times [R, 2R]\bigr),
\end{multline*}
and $|\nabla \eta| \le 8/R$, we thus obtain \eqref{eqn:improved_first_order_inequality}.

In order to prove \eqref{eqn:improved_second_order_inequality}, we use \eqref{eqn:second_order_inequality} in Proposition \ref{prop:local_energy_estimates}
and choose $\eta$ similarly to
the first part of this proof again. Then we use \eqref{eqn:improved_first_order_inequality}
(say, with $R/2$ instead of $R$) to estimate the right-hand side of \eqref{eqn:second_order_inequality}.
This gives the desired estimate.

Finally, for the proof of \eqref{eqn:improved_inequality_for_u'}, we consider the
conjugate harmonic function $v \colon \R_+^2 \to \R$ with $\nabla v = \nabla^\perp u$.
Again we choose $\eta \in C_0^\infty(\R^2)$
and compute
\[
\begin{split}
\int_{-\infty}^\infty \eta^2 (u')^2 \, dx_1 & = \int_{-\infty}^\infty \eta^2 \left(\dd{v}{x_2}(x_1, 0)\right)^2 \, dx_1 \\
& = - \int_{\R_+^2} \div \left(\eta^2 \dd{v}{x_2} \nabla v\right) \, dx \\
& = - \int_{\R_+^2} \left(\eta^2 \nabla v \cdot \nabla \dd{v}{x_2} + 2\eta \dd{v}{x_2} \nabla \eta \cdot \nabla v\right) \, dx \\
& \le \int_{\R_+^2} \bigg(\eta^2 (R|\nabla^2 v|^2 + R^{-1} |\nabla v|^2) + 2|\eta| |\nabla \eta| |\nabla v|^2\bigg) \, dx.
\end{split}
\]
We observe that $|\nabla v| = |\nabla u|$ and $|\nabla^2 v| = |\nabla^2 u|$. Using
\eqref{eqn:improved_first_order_inequality} and \eqref{eqn:improved_second_order_inequality}
and choosing $\eta$ appropriately, we therefore obtain \eqref{eqn:improved_inequality_for_u'}.
\end{proof}

\section{Localisation} \label{sect:localisation}

In the proofs of the main results, we need to estimate how the energy changes when
we localise a given profile with a cut-off function. For $h < 1$, we have suitable
estimates from our previous work \cite[Proposition 2.1]{Ignat-Moser:17}. For
the case $h = 1$, however, we need to modify the result.

\begin{lemma} \label{lem:localisation}
There exists a constant $C>0$ with the following property. Suppose that $h=1$ and
$\phi \in H_\loc^{1}(\R)$ is such that
$m = (\cos \phi, \sin \phi)$ satisfies $E_1(m) < \infty$.
Let $v \in \dot{H}^1(\R_+^2)$ be the solution of \eqref{eqn:v_harmonic}, \eqref{eqn:dual_boundary_condition}.
Furthermore, suppose that there
exist two numbers $\ell_\pm \in 2\pi \Z$ and three measurable functions $\omega, \sigma, \tau \colon [0, \infty) \to [0, \infty)$
such that
\[
|\phi(x_1) - \ell_+| \le \omega(x_1) \quad \text{and} \quad |\phi(-x_1) - \ell_-| \le \omega(x_1) \quad \text{for all $x_1 \ge 0$}
\]
and
\[
|\phi'(x_1)| \le \sigma(|x_1|) \quad \text{and} \quad \left|\dd{v}{x_2}(x_1, 0)\right| \le \tau(|x_1|) \quad \text{for all $x_1 \in \R$.}
\]
Suppose that $\sup_{x_1 \ge r} \omega(x_1) \le \frac{\pi}{2}$ for some $r \ge 1$.
Then for any $R \ge r$ there exists $\tilde{m} \in H_\loc^{1}(\R; \Ss^1)$
such that
\begin{equation} \label{eqn:localised_degree}
\deg(\tilde{m}) = \frac{\ell_+ - \ell_-}{2\pi}
\end{equation}
and
\[
\tilde{m}_1 = 1 \text{ in $(-\infty, -2R] \cup [2R, \infty)$},  \quad \tilde m_1=m_1 \text{ in $[-R,R]$},
\]
and 
$|\tilde{m}_1 - 1| \le |m_1 - 1|$
everywhere, and such that
\begin{multline*}
E_1(\tilde{m}) \le E_1(m) + \frac{C}{R^2} \int_R^{2R} \left(\omega^2 + R\omega \sigma\right) \, dx_1 + C\int_R^\infty \omega^2\tau \, dx_1 \\
+ C \left(\int_R^\infty \omega^4 \, dx_1\right)^{1/2} \left( \int_R^\infty \left(\frac{\omega^4}{R^2} + \omega^2 \sigma^2 \right) \, dx_1\right)^{1/2}.
\end{multline*}
\end{lemma}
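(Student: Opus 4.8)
The plan is to obtain $\tilde m$ by localising the lifting $\phi$ with a cut-off function, and then to estimate the three terms of $E_1(\tilde m)-E_1(m)$ separately. Fix $\chi\in C^\infty(\R)$ with $0\le\chi\le1$, $\chi\equiv1$ on $[-R,R]$, $\chi\equiv0$ outside $[-2R,2R]$, and $|\chi'|\le C_0/R$ for a universal constant $C_0$. Define $\tilde\phi(x_1)=\ell_++\chi(x_1)(\phi(x_1)-\ell_+)$ for $x_1\ge0$ and $\tilde\phi(x_1)=\ell_-+\chi(x_1)(\phi(x_1)-\ell_-)$ for $x_1\le0$; since $\chi\equiv1$ near $0$, the two formulas agree near $0$, so $\tilde\phi\in H^1_\loc(\R)$, and we set $\tilde m=(\cos\tilde\phi,\sin\tilde\phi)$. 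Then $\tilde\phi=\phi$ on $[-R,R]$ and $\tilde\phi\equiv\ell_\pm\in2\pi\Z$ on $\pm[2R,\infty)$, which immediately gives $\tilde m_1=m_1$ on $[-R,R]$, $\tilde m_1=1$ outside $[-2R,2R]$, and $\deg(\tilde m)=\frac1{2\pi}\lim_{x_1\to\infty}(\tilde\phi(x_1)-\tilde\phi(-x_1))=(\ell_+-\ell_-)/(2\pi)$, i.e.\ \eqref{eqn:localised_degree}. On $\pm[R,2R]\subseteq\{|x_1|\ge r\}$ we have $\tilde\phi-\ell_\pm=\chi(\phi-\ell_\pm)$ with $|\phi-\ell_\pm|\le\omega\le\pi/2$, so writing $1-m_1=2\sin^2(\phi/2)$, $1-\tilde m_1=2\sin^2(\tilde\phi/2)$ and using $\ell_\pm/2\in\pi\Z$ together with the monotonicity of $\sin$ on $[0,\pi/2]$ one obtains $|\sin(\tilde\phi/2)|=|\sin(\chi(\phi-\ell_\pm)/2)|\le|\sin((\phi-\ell_\pm)/2)|=|\sin(\phi/2)|$, hence $|\tilde m_1-1|\le|m_1-1|$ there, while elsewhere this is trivial. (Both $m_1-1$ and $\tilde m_1-1$ lie in $H^1(\R)\subset\dot H^{1/2}(\R)$.) Consequently $\|\tilde m_1-1\|_{L^2(\R)}^2-\|m_1-1\|_{L^2(\R)}^2\le0$; and for the exchange energy only the annuli $\pm[R,2R]$ contribute, where $\tilde\phi'=\chi\phi'+\chi'(\phi-\ell_\pm)$ gives, by $\chi^2\le1$, $|\phi'|\le\sigma$, $|\phi-\ell_\pm|\le\omega$,
\[
|\tilde\phi'|^2-|\phi'|^2\le 2|\chi'|\,|\phi-\ell_\pm|\,|\phi'|+(\chi')^2(\phi-\ell_\pm)^2\le\tfrac{2C_0}{R}\omega\sigma+\tfrac{C_0^2}{R^2}\omega^2,
\]
whence $\|\tilde m'\|_{L^2}^2-\|m'\|_{L^2}^2\le\int_{R\le|x_1|\le2R}(|\tilde\phi'|^2-|\phi'|^2)\,dx_1\le\frac{C}{R^2}\int_R^{2R}(\omega^2+R\omega\sigma)\,dx_1$.

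The stray field term is the delicate one. Set $f=m_1-1$, $\tilde f=\tilde m_1-1$ and $g=\tilde f-f=\tilde m_1-m_1$, so that $\|\tilde f\|_{\dot H^{1/2}(\R)}^2-\|f\|_{\dot H^{1/2}(\R)}^2=2\langle f,g\rangle_{\dot H^{1/2}(\R)}+\|g\|_{\dot H^{1/2}(\R)}^2$. Here $g=0$ on $[-R,R]$, and on $\{|x_1|\ge R\}$ (where $\omega\le\pi/2$) the bounds $1-m_1=2\sin^2(\phi/2)\le\frac12(\phi-\ell_\pm)^2\le\frac12\omega^2$ and $|\tilde m_1-1|\le|m_1-1|$ give $|g|\le\omega^2$. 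For the cross term I use the dual representation $\langle f,g\rangle_{\dot H^{1/2}(\R)}=-\int_\R g\,\dd{v}{x_2}(\blank,0)\,dx_1$ (integration by parts against the harmonic extensions of $f$ and $g$, $v$ being the harmonic extension of $f$ from \eqref{eqn:v_harmonic}, \eqref{eqn:dual_boundary_condition}) together with the hypothesis $|\dd{v}{x_2}(x_1,0)|\le\tau(|x_1|)$, obtaining $|\langle f,g\rangle_{\dot H^{1/2}(\R)}|\le\int_{|x_1|\ge R}|g|\tau\,dx_1\le2\int_R^\infty\omega^2\tau\,dx_1$. For the quadratic term I invoke the interpolation inequality $\|g\|_{\dot H^{1/2}(\R)}^2\le C\|g\|_{L^2(\R)}\|g'\|_{L^2(\R)}$: here $\|g\|_{L^2}^2\le\int_{|x_1|\ge R}\omega^4\,dx_1=2\int_R^\infty\omega^4\,dx_1$, and the key point for $\|g'\|_{L^2}$ is that on $\{|x_1|\ge R\}$ both $|\sin\phi|=|\sin(\phi-\ell_\pm)|\le\omega$ and $|\sin\tilde\phi|\le\omega$, so that $|m_1'|=|\sin\phi|\,|\phi'|\le\omega\sigma$ there, while on $\pm[R,2R]$ one has $|\tilde m_1'|=|\sin\tilde\phi|\,|\tilde\phi'|\le\omega(\frac{C_0}{R}\omega+\sigma)$ and $\tilde m_1'=0$ for $|x_1|>2R$; hence $|g'|\le\frac{C_0}{R}\omega^2+2\omega\sigma$ on $\pm[R,2R]$ and $|g'|\le\omega\sigma$ for $|x_1|>2R$, so $\|g'\|_{L^2}^2\le C\int_R^\infty(\frac{\omega^4}{R^2}+\omega^2\sigma^2)\,dx_1$. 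Adding the three contributions (and absorbing the outer factor $\frac12$ in $E_1=\frac12(\cdots)$ into the constant) yields the claimed inequality; if any integral on the right-hand side diverges, the estimate is vacuous, so nothing is lost.

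The main obstacle is the stray field term, and within it the necessity of estimating $g'=\tilde m_1'-m_1'$ \emph{without} discarding the factors $\sin\phi$, $\sin\tilde\phi$: writing $m_1'=-\sin\phi\,\phi'$ and using that $\sin\phi$ is itself of size $O(\omega)$ on $\{|x_1|\ge R\}$ is exactly what turns the naive $\sigma^2$ and $\omega^2/R^2$ into the required $\omega^2\sigma^2$ and $\omega^4/R^2$. One also has to keep in mind that $g$ is supported in the whole half-line $\{|x_1|\ge R\}$ and not merely in the transition annulus $\pm[R,2R]$ (because $m_1\ne1$ there in general), which is why the last two error terms in the statement are integrals over $[R,\infty)$. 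The integration-by-parts identity for $\langle f,g\rangle_{\dot H^{1/2}}$ and the interpolation bound $\|g\|_{\dot H^{1/2}}^2\le C\|g\|_{L^2}\|g'\|_{L^2}$ are standard and carry the remainder of the argument.
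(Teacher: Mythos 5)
Your proof is correct and follows essentially the same route as the paper: the same linear interpolation of the lifting $\tilde\phi=\ell_\pm+\chi(\phi-\ell_\pm)$, the same splitting $\|\tilde m_1-1\|_{\dot H^{1/2}}^2-\|m_1-1\|_{\dot H^{1/2}}^2=2\langle m_1-1,g\rangle_{\dot H^{1/2}}+\|g\|_{\dot H^{1/2}}^2$ with $g=\tilde m_1-m_1$, the same integration-by-parts treatment of the cross term via $\tau$, and the same Gagliardo--Nirenberg interpolation for $\|g\|_{\dot H^{1/2}}^2$. The only (immaterial) difference is that you bound $|g'|\le|\tilde m_1'|+|m_1'|$ by the triangle inequality using $|\sin\phi|,|\sin\tilde\phi|\le\omega$, where the paper instead estimates the difference $|\tilde m_1'-m_1'|$ directly via $|\sin\phi-\eta\sin\tilde\phi|\le|\phi-\ell_\pm|$; both yield the same $\omega^2\sigma^2+\omega^4/R^2$ integrand.
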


This result has a counterpart for $h \not= 1$, stated in \cite[Proposition 2.1]{Ignat-Moser:17}.
For the purpose of this paper, however, only the case $h < 1$ treated in \cite{Ignat-Moser:17} is relevant. The structure of the
statement is then similar to the above lemma, but for $h<1$ the inequality becomes
\begin{multline} \label{eqn:old_localisation}
E_h(\tilde{m}) \le E_h(m) + C \int_R^\infty \left(\frac{\omega^2}{R^2} + \sigma^2 + \omega\tau\right) \, dx_1 \\
+ C \left(\int_R^\infty \omega^2 \, dx_1\right)^{1/2} \left( \int_R^\infty \left(\frac{\omega^2}{R^2} + \sigma^2 \right) \, dx_1\right)^{1/2}.
\end{multline}
This is good enough for our proofs if $h < 1$, but for $h = 1$ we need the improvement given by 
Lemma \ref{lem:localisation}.

The following proof is similar to the reasoning of \cite[Proposition 2.1]{Ignat-Moser:17}, too,
but for the convenience of the reader, we repeat the arguments here.

\begin{proof}[Proof of Lemma \ref{lem:localisation}]
Choose $\eta \in C_0^\infty(\R)$ with $\eta(x_1) = 0$ for $|x_1| \ge 2R$ and $\eta(x_1) = 1$ for $|x_1| \le R$, and
such that $0 \le \eta \le 1$ and $|\eta'| \le 2/R$ everywhere. Define\footnote{An interpolation in $m_1$ was used in \cite[Proposition 2.1]{Ignat-Moser:17} for the case $h<1$ (and also for $h>1$). However, in our case $h=1$, the interpolation in the lifting $\phi$ is more appropriate.}
\[
\tilde{\phi}(x_1) = \begin{cases}
\ell_- + \eta(x_1) (\phi(x_1) - \ell_-) & \text{if $x_1 \le 0$}, \\
\ell_+ + \eta(x_1) (\phi(x_1) - \ell_+) & \text{if $x_1 > 0$}.
\end{cases}
\]
Set $\tilde{m} = (\cos \tilde{\phi}, \sin \tilde{\phi})$. Then clearly \eqref{eqn:localised_degree}
is satisfied and $|\tilde{m}_1 - 1| \le |m_1 - 1|$ everywhere.

For $x_1 > 0$, we compute
\[
\tilde{\phi}'(x_1) = \eta(x_1) \phi'(x_1) + \eta'(x_1) (\phi(x_1) - \ell_+).
\]
A similar identity holds for $x_1 < 0$. Hence
\[
(\tilde{\phi}')^2 \le (\phi')^2 + 2 |\eta \eta'| |\phi'| |\phi - \ell_\pm| + (\eta')^2 (\phi - \ell_\pm)^2 \le (\phi')^2 + \bigg(\frac{4\omega^2}{R^2} + \frac{4 \omega \sigma}{R}\bigg) {\bf 1}_{\{|x_1|\in [R, 2R]\}}
\]
(where for simplicity, we extend $\omega$ and $\sigma$ to $\R$ evenly).
It follows that
\[
\frac{1}{2} \int_{-\infty}^\infty (\tilde{\phi}')^2 \, dx_1 \le \frac{1}{2} \int_{-\infty}^\infty (\phi')^2 \, dx_1 + \frac{4}{R^2} \int_R^{2R} \left(\omega^2 + R \omega\sigma\right) \, dx_1.
\]
It is obvious that
\[
\frac{1}{2} \int_{-\infty}^\infty \left(\tilde{m}_1 - 1\right)^2 \, dx_1 \le \frac{1}{2} \int_{-\infty}^\infty (m_1 - 1)^2 \, dx_1.
\]
This leaves the stray field energy to be estimated.

Note that
\[
\left|\tilde{m}_1 - m_1\right| \le 1 - m_1 \le (\phi - \ell_\pm)^2 \le \omega^2
\]
in $\R \setminus [-R, R]$, whereas
\[
\begin{split}
\left|\tilde{m}_1' - m_1'\right| & = |\big(\eta' (\phi - \ell_\pm) + \eta \phi'\big) \sin \tilde{\phi} - \phi' \sin \phi| \\
& \le |\phi'| |\sin \phi - \eta \sin \tilde{\phi}| + |\eta'| |\phi - \ell_\pm| |\sin \tilde{\phi}| \\
& \le |\phi'| |\phi - \ell_\pm| + |\eta'| (\phi - \ell_\pm)^2 \\
& \le \omega \sigma + \frac{2\omega^2}{R}.
\end{split}
\]
Here we have used the inequality $|\sin \tilde \phi|=|\sin\big(\eta(\phi-\ell_\pm)\big)|\leq |\eta| |\phi-\ell_\pm|$ together with $|\sin t - \eta \sin (\eta t)|\leq |\sin  t|\leq |t|$ for all $0\leq \eta\leq 1$ and $t\in [-\pi/2, \pi/2]$ (applied to $t:=\phi-\ell_\pm$).
Thus by interpolation between $L^2(\R)$ and $\dot{H}^1(\R)$, we obtain
\[
\left\|\tilde{m}_1 - m_1\right\|_{\dot{H}^{1/2}(\R)}^2 \le C_1 \left(\int_R^\infty \omega^4 \, dx_1\right)^{1/2} \left(\int_R^\infty \left(\frac{\omega^4}{R^2} + \omega^2 \sigma^2\right) \, dx_1\right)^{1/2}
\]
for some universal constant $C_1$.

Now recall that $v \in \dot{H}^1(\R_+^2)$ is the harmonic extension of $m_1 - 1$ to $\R_+^2$ and
let $\tilde{v} \in \dot{H}^1(\R_+^2)$ be the harmonic extension of $\tilde{m}_1 - 1$.
Then
\[
\int_{\R_+^2} |\nabla \tilde{v}|^2 \, dx = \int_{\R_+^2} |\nabla v|^2 \, dx + \int_{\R_+^2} |\nabla v - \nabla \tilde{v}|^2 \, dx - 2 \int_{\R_+^2} \nabla v \cdot (\nabla v - \nabla \tilde{v}) \, dx.
\]
We know that
\[
\begin{split}
\int_{\R_+^2} |\nabla v - \nabla \tilde{v}|^2 \, dx & = \left\|m_1 - \tilde{m}_1\right\|_{\dot{H}^{1/2}(\R)}^2 \\
& \le C_1 \left(\int_R^\infty \omega^4 \, dx_1\right)^{1/2} \left(\int_R^\infty \left(\frac{\omega^4}{R^2} + \omega^2 \sigma^2\right) \, dx_1\right)^{1/2}.
\end{split}
\]
Moreover, an integration by parts gives
\[
- 2 \int_{\R_+^2} \nabla v \cdot (\nabla v - \nabla \tilde{v}) \, dx = 2 \int_{-\infty}^\infty (m_1 - \tilde{m}_1) \dd{v}{x_2} \, dx_1 \le 4\int_R^\infty \omega^2 \tau \, dx_1.
\]
Hence
\begin{multline*}
\frac{1}{2} \int_{\R_+^2} |\nabla \tilde{v}|^2 \, dx \le\frac{1}{2} \int_{\R_+^2} |\nabla v|^2 \, dx \\
+ \frac{C_1}{2} \left(\int_R^\infty \omega^4 \, dx_1\right)^{1/2} \left(\int_R^\infty \left(\frac{\omega^4}{R^2} + \omega^2 \sigma^2\right) \, dx_1\right)^{1/2} + 2\int_R^\infty \omega^2 \tau \, dx_1.
\end{multline*}
The desired inequality then follows.
\end{proof}

When we apply Lemma \ref{lem:localisation}, we will consider a fixed $d \in \Z \pm \{0, \alpha/\pi\}$ and
a profile $m$ that minimises $E_h$ in $\A_h(d)$. Then by Lemma \ref{lem:no_unaccounted_walls},
there exist $a_1, \dotsc, a_N \in \R$ with $a_1 < \dotsb < a_N$ and $N \le 2|d| + 1$
such that $m_2(a_n) = 0$ for $n = 1, \dotsc, N$ and $m_2 \not= 0$ elsewhere. Thus
Proposition \ref{prop:improved_decay}
applies between any pair of points $(a_n, a_{n + 1})$ and also in $(-\infty, a_1)$ and
in $(a_N, \infty)$. If $\phi$ is a lifting of $m$, choosing
\begin{align}
\nonumber
\omega(x_1) & = \max\{|\phi(x_1) - \ell_+|, |\phi(-x_1) - \ell_-|\} \\
\label{star2}
\sigma(x_1) & = \max\{|\phi'(x_1)|, |\phi'(-x_1)|\}, \\
\nonumber
\tau(x_1) & = \max\left\{\left|\dd{v}{x_2}(x_1, 0)\right|, \left|\dd{v}{x_2}(-x_1, 0)\right|\right\},
\end{align}
we then find, by Lemma \ref{lem:preliminary_decay_estimate}, that $\omega(x_1) \le \frac{\pi}{2}$
for $|x_1|$ large enough. Moreover, we then obtain $(\phi(x_1) - \ell_\pm)^2 \le \pi(1 - \cos \phi(x_1)) = \pi(1 - m_1(x_1))$
for $|x_1|$ large enough.\footnote{Note that $t^2/\pi\leq 1-\cos t\leq t^2/2$ for every $t\in [0, \pi/2]$.}
We write $I = (-2R, -R) \cup (R, 2R)$. Then Proposition \ref{prop:improved_decay} implies
\begin{equation} \label{eqn:decay1}
\begin{split}
\int_R^{2R} \omega^2 \, dx_1 & \le \pi \int_I (1 - m_1) \, dx_1 \\
& \le \pi \left(2R \int_I (1 - m_1)^2 \, dx_1\right)^{1/2} \le C_1 \sqrt{\frac{\log R}{R}}
\end{split}
\end{equation}
and
\begin{equation}
\begin{split}
\int_R^{2R} \omega \sigma \, dx_1 & \le \sqrt{\pi} \int_I \sqrt{1 - m_1} |m'| \, dx_1 \\
& \le \sqrt{\pi} \left(2R \int_I (1 - m_1)^2 \, dx_1\right)^{1/4} \left(\int_I |m'|^2 \, dx_1\right)^{1/2} \\
& \le \frac{C_1}{R^{5/4}} (\log R)^{3/4}
\end{split}
\end{equation}
for a constant $C_1 = C_1(d)$, provided that $R$ is sufficiently large.
Moreover,
\begin{align}
\int_R^\infty \omega^4 \, dx_1 & \le \pi^2 \int_{\R \setminus (-R, R)} (m_1 - 1)^2 \, dx_1 \le \frac{C_1}{R^2} \log R, \\
\int_R^\infty \omega^2 \sigma^2 \, dx_1 & \le \pi \int_{\R \setminus (-R, R)} \left(\frac{(m_1 - 1)^2}{R} + R|m'|^4\right) \, dx_1 \le \frac{C_1}{R^3} \log R, \\
\label{eqn:decay5}
\int_R^\infty \omega^2 \tau \, dx_1 & \le \pi \int_{\R \setminus (-R, R)} \left(R^{-1/2} (m_1 - 1)^2 + R^{1/2}\left(\dd{v}{x_2}\right)^2\right) \, dx_1 \le \frac{C_1}{R^{5/2}} \log R.
\end{align}

In the case $h < 1$, we use \cite[Proposition 2.1]{Ignat-Moser:17} instead of Lemma \ref{lem:localisation},
which gives rise to inequality \eqref{eqn:old_localisation}.
In this case, we know that $|\phi - \ell_\pm| \le c |m_1 - h|$ and $|\phi'| \le c |m_1'|$,
with a constant $c$ depending on $h$, when $|x_1|$ is sufficiently large. This gives rise to
\begin{align}
\label{eqn:decay6}
\int_R^\infty \omega^2 \, dx_1 & \le c^2 \int_{\R \setminus (-R, R)} (m_1 - h)^2 \, dx_1 \le \frac{C_2}{R^2} \log R, \\
\int_R^\infty \sigma^2 \, dx_1 & \le c^2 \int_{\R \setminus (-R, R)} |m_1'|^2 \, dx_1 \le \frac{C_2}{R^4} \log R, \\
\label{eqn:decay8}
\int_R^\infty \omega \tau \, dx_1 & \le c \int_{\R \setminus (-R, R)} \left(R^{-1/2} (m_1 - h)^2 + R^{1/2}(u')^2\right) \, dx_1 \le \frac{C_2}{R^{5/2}} \log R
\end{align}
for a constant $C_2 = C_2(d, h)$, provided that $R$ is sufficiently large.
These estimates are not quite as good as the ones derived in \cite{Ignat-Moser:17},
but they are sufficient for our purpose and they avoid a significant part of the
previous analysis. This conclusion can be summarized as follows.

\begin{corollary}
\label{cor:local}
Let $h\in [0,1]$ and $d\in \Z\pm \{0, \alpha/\pi\}$. Suppose that $m$ is a minimizer of $\E_h(d)$ in $\A_h(d)$.
Then there exist 
$R_0\geq 1$ and $\beta>0$ such that for every $R\geq R_0$, there exists $\tilde m \in \A_h(d)$
that is locally constant in $\R\setminus [-R, R]$ and satisfies
\[
E_h(\tilde m)\leq \E_h(d)+\frac1{R^{2+\beta}}
\]
and $|\tilde m_1-h|\leq |m_1-h|$ everywhere in $\R$.
\end{corollary}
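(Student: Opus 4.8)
The plan is to produce $\tilde m$ by feeding the minimiser $m$ into the localisation results already at our disposal and then reading off the decay rate of the error term. We distinguish the cases $h = 1$ and $h < 1$, invoking Lemma \ref{lem:localisation} in the former and \cite[Proposition 2.1]{Ignat-Moser:17} (which yields inequality \eqref{eqn:old_localisation}) in the latter. The case $d = 0$ is trivial: then any minimiser of $E_h$ in $\A_h(0)$ is constant and we may take $\tilde m = m$. So assume $d \neq 0$.

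First I would fix a lifting $\phi$ of $m$. Since $E_h(m) < \infty$, the limits $\ell_\pm := \lim_{x_1 \to \pm\infty} \phi(x_1)$ exist, with $(\ell_+ - \ell_-)/(2\pi) = \deg(m) = d$ and, when $h = 1$, $\ell_\pm \in 2\pi\Z$. By Lemma \ref{lem:no_unaccounted_walls} there are finitely many points $a_1 < \dots < a_N$ with $N \le 2|d| + 1$ at which $m_2$ vanishes, and $m_2 \neq 0$ elsewhere; hence Proposition \ref{prop:improved_decay} applies on the two unbounded intervals $(-\infty, a_1)$ and $(a_N, \infty)$. Choosing $\omega, \sigma, \tau$ as in \eqref{star2}, Lemma \ref{lem:preliminary_decay_estimate} gives $\sup_{x_1 \ge r} \omega(x_1) \le \pi/2$ for some $r \ge 1$, and then the computations \eqref{eqn:decay1}--\eqref{eqn:decay5} (for $h = 1$) and \eqref{eqn:decay6}--\eqref{eqn:decay8} (for $h < 1$) bound, for every sufficiently large $R$, each of the quantities occurring on the right-hand side of the relevant localisation inequality.

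Next I would substitute these bounds into Lemma \ref{lem:localisation}, respectively into \eqref{eqn:old_localisation}, applying the localisation with $R/2$ in place of $R$ so that the resulting $\tilde m$ is constant on $(-\infty, -R)$ and on $(R, \infty)$ rather than merely outside $[-2R, 2R]$. A term-by-term inspection shows that every error contribution is $\le C R^{-9/4} \log R$ when $h = 1$ (the slowest-decaying term being the one carrying $\int_R^{2R} \omega\sigma$) and $\le C R^{-5/2} \log R$ when $h < 1$ (the slowest being the one carrying $\int_R^\infty \omega\tau$), with $C = C(d, h)$; in either case
\[
E_h(\tilde m) \le E_h(m) + C R^{-9/4} \log R = \E_h(d) + C R^{-9/4} \log R .
\]
Now fix any $\beta \in (0, 1/4)$, for definiteness $\beta = 1/8$, and take $R_0 \ge 2r$ large enough that all of the above estimates hold and that $C \log R \le R^{1/4 - \beta}$ for all $R \ge R_0$; then $E_h(\tilde m) \le \E_h(d) + R^{-2-\beta}$ for every $R \ge R_0$. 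It remains to note that $\tilde m \in \A_h(d)$: the localisation results give $\deg(\tilde m) = (\ell_+ - \ell_-)/(2\pi) = d$, while $E_h(\tilde m) < \infty$ because $\tilde m \in H^1_{\loc}(\R;\Ss^1)$ agrees with $m$ on a bounded interval and is constant outside a bounded interval; the properties $|\tilde m_1 - h| \le |m_1 - h|$ everywhere and local constancy of $\tilde m$ on $\R \setminus [-R, R]$ are part of the conclusions of Lemma \ref{lem:localisation} and of \cite[Proposition 2.1]{Ignat-Moser:17}.

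The argument is essentially bookkeeping; the substance lies in Propositions \ref{prop:local_energy_estimates} and \ref{prop:improved_decay}, in Lemma \ref{lem:localisation}, and in the derivations \eqref{eqn:decay1}--\eqref{eqn:decay8}. The only point that needs a moment's care is to check that \emph{every} error term in the two localisation inequalities decays strictly faster than $R^{-2}$. This is precisely where the gain of a genuine power of $R$ in Proposition \ref{prop:improved_decay} (beyond what Proposition \ref{prop:L^1-estimate} alone delivers) is used; once that is secured, the spare power of $R$ absorbs the harmless logarithm and produces an admissible exponent $2 + \beta$.
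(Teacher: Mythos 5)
Your proof is correct and follows essentially the same route as the paper: the corollary is stated in the text as the summary of the localisation machinery, and your argument reproduces it, feeding the minimiser $m$ into Lemma~\ref{lem:localisation} (for $h=1$) or into \cite[Proposition 2.1]{Ignat-Moser:17} giving \eqref{eqn:old_localisation} (for $h<1$), and then controlling every error term via the decay bounds \eqref{eqn:decay1}--\eqref{eqn:decay8} which themselves rest on Proposition~\ref{prop:improved_decay}. Your term-by-term bookkeeping (slowest contribution $O(R^{-9/4}(\log R)^{3/4})$ for $h=1$, $O(R^{-5/2}\log R)$ for $h<1$) is accurate, and the choice of $\beta$ and $R_0$ then follows exactly as in the paper.
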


\section{Proofs of the main results}

\subsection{Existence and non-existence of minimisers} \label{sect:existence/nonexistence}

For the proof Theorem \ref{thm:existence},
we can now largely use the arguments from our previous paper \cite{Ignat-Moser:17},
but we replace the previous decay estimates by Proposition \ref{prop:improved_decay}
and the previous $L^1$-estimates by Proposition \ref{prop:L^1-estimate}.
The main task is to estimate the stray field energy for potential minimisers of $E_h$.
To this end, we divide $m_1 - h$ into several pieces, each of which is
either nonpositive or nonnegative in $\R$.
As we may express the stray field energy in terms of the $\dot{H}^{1/2}$-inner product,
the following estimate, given as Lemma 4.3 in \cite{Ignat-Moser:17}, is particularly
useful here.

\begin{lemma} \label{lem:H^{1/2}}
Let $f, g \in \dot{H}^{1/2}(\R)$ be nonnegative functions and suppose that there exists
$R > 0$ with $\supp f \subseteq [- 2R, - R]$ and $\supp g \subseteq [R, 2R]$.
Then
\[
-\frac{1}{4\pi R^2} \|f\|_{L^1(\R)} \|g\|_{L^1(\R)} \le \scp{f}{g}_{\dot{H}^{1/2}(\R)} \le -\frac{1}{16\pi R^2} \|f\|_{L^1(\R)} \|g\|_{L^1(\R)}.
\]
\end{lemma}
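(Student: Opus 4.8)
The plan is to reduce everything to the double integral representation \eqref{eqn:double_integral} of the $\dot{H}^{1/2}$-seminorm and to exploit the disjointness of the supports. First I would polarise \eqref{eqn:double_integral}: applying it to $f+g$ and subtracting $\|f\|_{\dot{H}^{1/2}(\R)}^2$ and $\|g\|_{\dot{H}^{1/2}(\R)}^2$, and noting that $|f(x)+g(x)-f(y)-g(y)|^2-|f(x)-f(y)|^2-|g(x)-g(y)|^2 = 2(f(x)-f(y))(g(x)-g(y))$, one obtains
\[
\scp{f}{g}_{\dot{H}^{1/2}(\R)} = \frac{1}{2\pi}\int_\R\int_\R \frac{(f(x)-f(y))(g(x)-g(y))}{|x-y|^2}\,dx\,dy .
\]
All integrals here are absolutely convergent: $f,g\in L^2(\R)$ have bounded support, hence $f,g\in L^1(\R)$, and on the region that actually contributes (see below) the kernel $|x-y|^{-2}$ is bounded.

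Next I would use that $\supp f\subseteq[-2R,-R]$ and $\supp g\subseteq[R,2R]$ are disjoint. Consequently the integrand $(f(x)-f(y))(g(x)-g(y))$ is nonzero only when exactly one of $x,y$ lies in $\supp f$ and the other in $\supp g$. In the case $x\in\supp f$, $y\in\supp g$ we have $g(x)=0=f(y)$, so the integrand equals $(f(x))(-g(y))=-f(x)g(y)$; the symmetric case $x\in\supp g$, $y\in\supp f$ gives $-f(y)g(x)$, which integrates to the same quantity after relabelling $x\leftrightarrow y$. Hence
\[
\scp{f}{g}_{\dot{H}^{1/2}(\R)} = -\frac{1}{\pi}\int_{\supp f}\int_{\supp g}\frac{f(x)\,g(y)}{|x-y|^2}\,dy\,dx .
\]

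Finally, for $x\in[-2R,-R]$ and $y\in[R,2R]$ we have $|x-y|=y-x\in[2R,4R]$, so $\tfrac{1}{16R^2}\le |x-y|^{-2}\le \tfrac{1}{4R^2}$. Since $f,g\ge 0$, multiplying by $f(x)g(y)$ and integrating over $\supp f\times\supp g$ gives
\[
\frac{1}{16R^2}\|f\|_{L^1(\R)}\|g\|_{L^1(\R)} \le \int_{\supp f}\int_{\supp g}\frac{f(x)g(y)}{|x-y|^2}\,dy\,dx \le \frac{1}{4R^2}\|f\|_{L^1(\R)}\|g\|_{L^1(\R)},
\]
and multiplying through by $-1/\pi$ reverses the inequalities, yielding exactly the claimed two-sided bound. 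I expect the only mildly delicate point to be the bookkeeping in the polarisation and disjoint-support step — keeping track of the sign and of the overall constant $-1/\pi$ — but there is no genuine analytic difficulty; everything after that is the elementary estimate $2R\le|x-y|\le 4R$.
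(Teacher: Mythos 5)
The paper does not prove this lemma; it simply cites it as Lemma 4.3 of \cite{Ignat-Moser:17}, so there is no internal proof to compare against. Your argument is correct and complete: polarising the double-integral representation \eqref{eqn:double_integral}, using the disjointness of the supports to reduce the inner product to $-\frac{1}{\pi}\int_{\supp f}\int_{\supp g} f(x)g(y)|x-y|^{-2}\,dy\,dx$, and then bounding the kernel via $2R\le|x-y|\le 4R$ yields exactly the two-sided estimate with the stated constants.
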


Using this and the above estimates, we can now examine what happens if a magnetisation profile
$m$ is split into two or more parts or if several profiles are combined. For this purpose,
we use the following notion.

\begin{definition}
Fix $h \in (0, 1]$ and let $D_h = (\N \pm \{0, \alpha/\pi\}) \cup \{\alpha/\pi\}$.
Suppose that $d \in D_h$.
For $J \in \N$ and $d_1, \dotsc, d_J \in D_h$, we say that $(d_1, \dotsc, d_J)$ is a
\emph{partition} of $d$ if
\begin{itemize}
\item $d = d_1 + \dotsb + d_J$ and
\item for all $j, k \in \{1, \dotsc, J\}$ with $j < k$, if $d_j \not\in \N$ and $d_k \not\in \N$ but
$d_{j + 1}, \dotsc, d_{k - 1} \in \N$, then $d_j + d_k \in \N$.
\end{itemize}
We say that the partition is \emph{trivial} if $J = 1$.
\end{definition}

The conditions in the definition guarantee that profiles $m^{(j)} \in \A_h(d_j)$
can be combined to form a profile in $\A_h(d)$, although for $d_j \in \N$, it may be necessary
to reverse the orientation of $\R$ as well as $\Ss^1$ and consider $m_1^{(j)}(-x_1)$ instead of $m_1^{(j)}$
and $-m_2^{(j)}(-x_1)$ instead of $m_2^{(j)}$.

One of the key ingredients for the proof of Theorem \ref{thm:existence} is a concentration-compactness
principle that allows to prove existence of minimisers in $\A_h(d)$ under the assumption
that any nontrivial partition of $d$ will give rise to a larger total energy. A special case was
formulated in our previous paper \cite[Theorem 6.1]{Ignat-Moser:17}. The statement can
easily be extended as follows.

\begin{theorem} \label{thm:concentration-compactness}
Suppose that $d \in D_h$ is such that
\[
\E_h(d) < \sum_{k = 1}^J \E_h(d_k)
\]
for all nontrivial partitions $(d_1, \dotsc, d_J)$ of $d$. Then $E_h$ attains its infimum in $\A_h(d)$.
\end{theorem}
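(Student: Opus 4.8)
The plan is to run the concentration–compactness method of Lions on a minimising sequence in $\A_h(d)$, following the scheme of \cite[Theorem 6.1]{Ignat-Moser:17} but keeping track of the more general partition structure. First I would take a minimising sequence $(m^{(k)})$ for $E_h$ in $\A_h(d)$; after translation one may normalise so that, say, the point where $m_1^{(k)}$ reaches a prescribed value $\pm1$ associated with a ``central'' wall sits at the origin. The exchange and anisotropy terms control $\|m_1^{(k)}-h\|_{H^1}$, so up to a subsequence $m^{(k)}\rightharpoonup m$ weakly in $H^1_{\loc}$ with $m=(\cos\phi,\sin\phi)$ for a limiting lifting $\phi$, and $m_1-h\in \dot H^{1/2}\cap L^2$. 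The measures $\mu_k = $ (energy density of $m^{(k)}$, including the nonlocal density written via the harmonic extension $v^{(k)}$) have uniformly bounded mass $E_h(m^{(k)})\to\E_h(d)$, and the standard trichotomy gives three alternatives: compactness, vanishing, or dichotomy.

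Next I would rule out vanishing and dichotomy. Vanishing is excluded because Lemma \ref{lem:no_unaccounted_walls} forces at least one point where $m_1^{(k)}=\pm1$, and near such a point the energy is bounded below by a fixed positive amount (the equipartition/Pohozaev-type lower bound $\E_h(d)\ge 2|d|-1>0$ argument localised to an interval), so no mass can escape to spatial infinity everywhere. For dichotomy, the mass of $\mu_k$ splits as $\lambda + (\text{mass}-\lambda)$ with the two parts separating to infinite distance; using the localisation estimate from Corollary \ref{cor:local} (for $h=1$ it is Lemma \ref{lem:localisation}, for $h<1$ it is \cite[Proposition 2.1]{Ignat-Moser:17}), I cut $m^{(k)}$ with a cut-off function at the separating scale into pieces $m^{(k,1)},\dots,m^{(k,J)}$, each locally constant outside a bounded window, each with a well-defined winding number $d_j\in D_h$ summing to $d$ and satisfying the compatibility (integer-jump) condition — so $(d_1,\dots,d_J)$ is a genuine \emph{partition} of $d$. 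The localisation bound shows $\sum_j E_h(m^{(k,j)}) \le E_h(m^{(k)}) + o(1)$ (the error is $O(R^{-2-\beta})$ as $R\to\infty$), hence $\sum_j \E_h(d_j) \le \E_h(d)$. If the partition is nontrivial this contradicts the hypothesis $\E_h(d) < \sum_k \E_h(d_k)$; if it is forced to be trivial ($J=1$), then no genuine splitting occurred, i.e. dichotomy did not really take place. So only compactness survives.

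In the compactness case, after the translation normalisation the energy densities $\mu_k$ concentrate in a fixed bounded region up to small tails; combined with the decay estimates of Proposition \ref{prop:improved_decay} this gives tightness of $m_1^{(k)}-h$ in $L^2(\R)$ and, by the double-integral representation \eqref{eqn:double_integral} together with the $L^1$-control of $(m_1-h)_\pm$ from Proposition \ref{prop:L^1-estimate}, convergence of the stray field energy to that of the limit. Weak lower semicontinuity of the three convex-type terms (exchange, anisotropy, and $\|\cdot\|_{\dot H^{1/2}}^2$) then yields $E_h(m)\le\liminf E_h(m^{(k)})=\E_h(d)$. It remains to check $m\in\A_h(d)$, i.e. that $\deg(m)=d$: the winding number cannot jump down in the limit because any lost winding would have to escape to spatial infinity, which is precisely the dichotomy/vanishing scenario already excluded; more concretely, the $L^2$-tightness of $m_1^{(k)}-h$ plus the decay estimates pin the asymptotic phase, so $\phi(x_1)-\phi(-x_1)\to 2\pi d$. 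Hence $m$ is a minimiser and $E_h$ attains its infimum in $\A_h(d)$.

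The main obstacle I expect is the dichotomy step: one must show that whenever the energy genuinely splits, the pieces can be realised as \emph{admissible} profiles of well-defined winding numbers $d_j\in D_h$ obeying the compatibility condition in the definition of a partition (so that, e.g., two non-integer pieces separated only by integer pieces have integer sum), and simultaneously that the cut-off procedure loses only a negligible amount of energy. This requires both the precise decay/localisation estimates (to make the cut-off error summably small) and a careful bookkeeping of the values $m_1=\pm1$ at the cut points, via Lemma \ref{lem:no_unaccounted_walls}, to guarantee that the cutting can be done at points compatible with the circle-geometry of $\Ss^1$. Everything else — the compactness case and lower semicontinuity — is comparatively routine given the estimates already established in the excerpt.
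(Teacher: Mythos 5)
Your proposal has the right overall shape — translate, extract a weak limit, cut off, and use the partition hypothesis — but it glosses over two substantive points where the paper's proof does real work, and it leans on estimates that are not available at this stage.

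First, the degree bookkeeping. When the minimising sequence is split into a middle piece and two escaping tails, the tails carry winding numbers $d^\pm$ that a priori could be \emph{negative} or zero, and the middle piece's degree $\tilde d$ might not even lie in $D_h$. In that case $(d^-,\tilde d,d^+)$ is not a partition of $d$ in the sense of the definition, and the hypothesis $\E_h(d)<\sum\E_h(d_k)$ says nothing. Steps 1--3 of the paper's proof are devoted precisely to excluding $d^\pm<0$, $\tilde d+d^\pm\le 0$, and $\tilde d\le 0$, using the monotonicity and subadditivity of $\E_h$ from Remark~\ref{rem:degree} together with the strictly positive lower bound from Lemma~\ref{lem:no_unaccounted_walls} (the quantity $\max\{C,\E_h(\tilde d)\}$). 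Your proposal asserts that the pieces have degrees $d_j\in D_h$ summing to $d$ and immediately invokes the partition hypothesis; this is exactly the nontrivial gap. Likewise, your ``compactness case'' argument that $\deg(m)=d$ ``because lost winding would have to escape to infinity, which was already excluded'' is circular: excluding dichotomy \emph{is} the statement $\deg(m)=d$, and it has to be proved, not assumed.

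Second, you invoke Corollary~\ref{cor:local} (hence Propositions~\ref{prop:improved_decay} and~\ref{prop:L^1-estimate}) to cut off $m^{(k)}$ and to establish tightness of the limit. But those results are stated for \emph{minimisers} of $E_h$ in $\A_h(d)$, whereas neither the $m^{(k)}$ nor the weak limit $m$ is known to be a minimiser at this point of the argument. The paper avoids this by not cutting at a scale dictated by decay estimates; instead it uses the quantitative weak convergence $\int_{-2j}^{2j}|m^j-m|^2\,dx_1\le j^{-5}$, so that the splitting error is controlled by the closeness of $m^j$ to its own limit $m$, together with lower semicontinuity, rather than by any a priori decay of a minimiser. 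The only nontrivial inputs are Lemma~\ref{lem:no_unaccounted_walls} (boundedness of the escaping degrees), Remark~\ref{rem:degree}, and the partition hypothesis; none of the localisation or decay propositions are needed in the proof of this theorem — they are used elsewhere, to verify the hypothesis of the theorem for specific $d$.
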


\begin{proof} 
For completeness, we adapt the arguments presented in the proof of \cite[Theorem~6.1]{Ignat-Moser:17}. Consider a minimising sequence $(m^j)_{j \in \N}$ of $E_h$
in $\A_h(d)$. Up to translation, we can assume that 
\be
\label{mzero}
m^j(0) \in \{ (\pm 1, 0)\} \text{ if $h<1$} \quad \text{and} \quad m^j(0)=(-1,0) \text{ if $h=1$}
\ee
for every $j \in \N$.\footnote{In the proof of \cite[Theorem~6.1]{Ignat-Moser:17}, we further used a symmetrisation argument when choosing the minimizing sequence $(m^j)_{j \in \N}$, but this is not necessary here.}
Furthermore, as we may choose a subsequence if necessary, we may assume that $m^j \rightharpoonup m$
weakly in $H^{1}_\loc(\R; \Ss^1)$ and locally uniformly in $\R$ for some $m \in H^{1}_\loc(\R; \Ss^1)$
and
\[
\int_{-2j}^{2j} |m^j - m|^2 \, dx_1 \le \frac{1}{j^5} \quad \text{for all } j\in \N.
\]
Then $m(0)$ satisfies \eqref{mzero}, and
the lower semicontinuity of the energy with
respect to such convergence implies
\[
E_h(m) \le \liminf_{j \to \infty} E_h(m^j) = \E_h(d).
\]
In particular, we have $\lim_{x_1 \to \pm \infty} m_1(x_1) = h$,
and the winding number $\tilde{d} = \deg(m)$ is well-defined and belongs to $\Z + \{0, \pm \alpha/\pi\}$; moreover, 
$$\lim_{j \to \infty} \|m_1^j - h\|_{L^\infty([-2j, -j] \cup [j, 2j])} = 0 \quad \textrm{and}\quad \lim_{j \to \infty} \int_{-2j}^{2j} (m^j)^\perp \cdot (m^j)' \, dx_1 = 2\pi \tilde{d}.$$
The aim is to show that $\tilde{d} = d$, which entails that $m$ is a minimiser of $E_h$ in $\A_h(d)$.

The idea is to split the map $m^j$ for each $j\in \N$ by cutting off a left part $\hat{m}^{-j}$, a middle part  $\tilde{m}^{j}$ and a right part $\hat{m}^{j}$ such that $\hat{m}^{-j}=m^j$ in $(-\infty, -2j)$ and constant in $(-7j/4, +\infty)$, $\tilde{m}^{j}=m^j$ in $(-j,j)$ and constant in $(-\infty, -5j/4)\cup (5j/4, +\infty)$, while 
$\hat{m}^{j}=m^j$ in $(2j, +\infty)$ and constant in $(-\infty, 7j/4)$. Then
$\deg(\tilde m^j)=\tilde d$, and if we denote $d^{\pm j}=\deg(\hat{m}^{\pm j})$, we have $\tilde d+d^{-j}+d^j=d$ for every sufficiently large $j\in \N$. Following the argument in the proof of 
\cite[Theorem~6.1]{Ignat-Moser:17}, we obtain
$$\limsup_{j \to \infty} \left(E_h(\tilde{m}^j) + E_h(\hat{m}^j) + E_h(\hat{m}^{-j})\right) \le \E_h(d).$$
In particular, we deduce that the two sequences $(\E_h(d^{\pm j}))_{j \in \N}$
are bounded; by Lemma~\ref{lem:no_unaccounted_walls}, it follows that the two sequences $(d^{\pm j})_{j \in \N}$ are bounded. Therefore, we can extract a subsequence, such that after relabelling, those sequences are constant, i.e., $d^{-j}=d^-$ and 
$d^j=d^+$ for two degrees $d^{\pm}\in \Z + \{0, \pm \alpha/\pi\}$. Note that
all the pairs $(d^-, \tilde{d})$, $(\tilde{d},d^+$), $(d^-, \tilde{d} + d^+)$, and $(d^+, \tilde{d} + d^-)$
comprise compatible neighbouring degrees (see Remark \ref{rem:degree}). Moreover, by Lemma \ref{lem:no_unaccounted_walls}, combined with \eqref{mzero}, we know that $\limsup_{j \to \infty} E_h(\tilde{m}^j)\geq \max\{C, \E_h(\tilde{d})\}$ for some $C>0$ depending only on $h$. Therefore, it follows that
\be
\label{id_degre}
\E_h(d^+)+\E_h(d^-)+\max\{C, \E_h(\tilde{d})\}\le \E_h(d) \quad \textrm{and} \quad \tilde d+d^{-}+d^+=d.
\ee
We claim that the above relation entails $d=\tilde d$. This follows in several steps.

\begin{severalsteps}

\step{we prove that $d^+, d^-\geq 0$} Assume by contradiction that $d^-<0$ (the other case  $d^+<0$ can be treated identically). In particular, by \eqref{id_degre}, $d^++\tilde d=d-d^->d$. As $\tilde d$ and $d^+$ are compatible neighbouring degrees, Remark \ref{rem:degree} implies that $\E_h(\tilde{d})+\E_h(d^+)\geq \E_h(\tilde{d}+d^+)$. We distinguish two cases.

\begin{itemize}
\item If $0<h<1$ and $(d^++\tilde d,d)=(\ell+1-\alpha/{\pi}, \ell+\alpha/\pi)$ for some $\ell\in \N\cup\{0\}$, then $d^-=2\alpha/\pi-1\in \Z + \{0, \pm \alpha/\pi\}$. Thus
$\alpha=\pi/3$ (as $\alpha\in (0, \pi/2)$), $d^-=-1/3$, and $\tilde d+d^+=\ell+2/3$, which contradicts the compatibility of the neighbouring transitions of degree $d^-$ and $\tilde d+d^+$. 

\item Otherwise, the monotonicity in Remark \ref{rem:degree} applies and we conclude that
\[
\E_h(\tilde{d})+\E_h(d^+)\geq \E_h(\tilde{d}+d^+)\geq \E_h(d)\stackrel{\eqref{id_degre}}{\geq}
\E_h(\tilde{d})+\E_h(d^+)+\E_h(d^-).
\]
In particular, $\E_h(d^-)=0$. By Lemma \ref{lem:no_unaccounted_walls}, this would imply $d^-=0$, which contradicts the assumption $d^-<0$.
\end{itemize}

\step{we prove that $\tilde d+d^+, \tilde d+d^-> 0$}Assume by contradiction that $d^-+\tilde d\leq 0$ (the other case  $d^++\tilde d\leq 0$ can be treated identically). By \eqref{id_degre}, $d^+=d- (d^-+\tilde d)\geq d$.
We distinguish two cases.

\begin{itemize}
\item If $0<h<1$ and $(d^+,d)=(\ell+1-\alpha/{\pi}, \ell+\alpha/\pi)$ for some $\ell\in \N\cup\{0\}$, then $d^-+\tilde d=2\alpha/\pi-1\in \Z + \{0, \pm \alpha/\pi\}$. Thus
$\alpha=\pi/3$, $d^+=\ell+2/3$, and $\tilde d+d^-=-1/3$, which contradicts the compatibility of the neighbouring transitions of degree $d^+$ and $\tilde d+d^-$. 

\item Otherwise, the monotonicity in Remark \ref{rem:degree} applies and gives
\[
\E_h(d^+)\geq \E_h(d)\stackrel{\eqref{id_degre}}{\geq}
\E_h(d^+)+C,
\]
which is impossible as $C>0$.
\end{itemize}

\step{we prove that $\tilde d> 0$} Assume by contradiction that  $\tilde d\leq 0$. As $\tilde d$ and 
$d^+$ are compatible neighbouring degrees, Remark \ref{rem:degree} implies that $\E_h(d^+)+\E_h(\tilde d)
\geq \E_h(d^++\tilde d)$. By \eqref{id_degre}, it follows that $\E_h(d)\geq \E_h(d^++\tilde d)+\E_h(d^-)$. 
As $d^-\geq 0$ (by Step 1) and $d^++\tilde d>0$ (by Step 2), the assumption of the theorem implies that $(d^-, d^++\tilde d)$ is \emph{not}
a partition of $d$, i.e., that $d^-=0$. This in turn implies that $d^-+\tilde d=\tilde d\leq 0$, which contradicts Step 2.

\step{we conclude that $d=\tilde d$} If this were not the case, then Steps 1 and 3, combined with the compatibility of neighbouring transitions of degree $d^-, \tilde d$ and $d^+$ (see Remark \ref{rem:degree}),
would imply that $(d^-, \tilde d, d^+)$ or $(d^-, \tilde d)$ or $(\tilde d, d^+)$ is a nontrivial partition
of $d$. By the hypothesis of the theorem, however, this would contradict \eqref{id_degre}.
Therefore, we see that $d=\tilde d$. 
\end{severalsteps}
\end{proof}

If we want to use Theorem \ref{thm:concentration-compactness}, we need to verify the inequality
in the hypothesis.
We first study what happens if the profiles
of several \emph{minimisers} of $E_h$ (for their respective winding numbers) are combined.
This is the counterpart of \cite[Theorem 7.2]{Ignat-Moser:17} for higher winding numbers.

\begin{proposition} \label{prop:inequality_composite_walls}
For any $\ell \in \N$ there exists $H \in (0, 1)$ such that the
following holds true for all $h \in (H, 1]$. Suppose that $d = \ell - \alpha/\pi$.
Let $(d_1, \dotsc, d_J)$
be a nontrivial partition of $d$ such that there exists a minimizer $m^{(j)} \in \A_h(d_j)$
with $E_h(m^{(j)}) = \E_h(d_j)$ for all $j = 1, \dotsc, J$. Then
\[
\E_h(d) < \sum_{j = 1}^J \E_h(d_j).
\]
\end{proposition}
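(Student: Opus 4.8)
The plan is to glue the given minimizers $m^{(1)},\dots,m^{(J)}$ into a single competitor $M\in\A_h(d)$ with $E_h(M)<\sum_j\E_h(d_j)$, the strict gain being produced by the nonlocal term. First I would localize: after translating each $m^{(j)}$ so that the origin is a point where $m_1^{(j)}=-1$, I apply Corollary~\ref{cor:local} (or directly Lemma~\ref{lem:localisation} and \cite[Proposition~2.1]{Ignat-Moser:17}) to get $\tilde m^{(j)}\in\A_h(d_j)$ with $\tilde m_1^{(j)}-h$ supported in $[-R,R]$, coinciding with $m^{(j)}$ near the core, with $|\tilde m_1^{(j)}-h|\le|m_1^{(j)}-h|$, and with $E_h(\tilde m^{(j)})\le\E_h(d_j)+R^{-2-\beta}$. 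I then place translated copies of the $\tilde m^{(j)}$ consecutively along $\R$, in the order prescribed by the partition and with centres $3Rj$ (so distinct pieces are at distance $\ge R$), choosing orientations — reflections in $x_1$ and in $m_2$, allowed precisely by the compatibility built into the definition of a partition and Remark~\ref{rem:degree} — so that the constant limiting values match at the junctions. Since the pieces are constant near the junctions, $M\in H^1_\loc(\R;\Ss^1)$ with $\deg M=\sum_j d_j=d$, and as only the $\dot{H}^{1/2}$-term is nonlocal,
\[
E_h(M)=\sum_j E_h(\tilde m^{(j)})+\sum_{j<k}\scp{\tilde m_1^{(j)}-h}{\tilde m_1^{(k)}-h}_{\dot{H}^{1/2}}\le\sum_j\E_h(d_j)+JR^{-2-\beta}+\Sigma ,
\]
where $\Sigma$ is the sum of the $\dot{H}^{1/2}$-cross terms between the translated pieces; it remains to show $\Sigma\le-cR^{-2}$ with $c>0$ independent of $R$.

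To estimate $\Sigma$ I would split $\tilde m_1^{(j)}-h=g_j-f_j$ into nonnegative and nonpositive parts ($g_j=(\tilde m_1^{(j)}-h)_+$, $f_j=-(\tilde m_1^{(j)}-h)_-$, both in $\dot{H}^{1/2}$ because $t\mapsto t_\pm$ is $1$-Lipschitz). Distinct pieces have disjoint supports at distance $\ge R$, so using the kernel formula $\scp{p}{q}_{\dot{H}^{1/2}}=-\tfrac1\pi\int\!\!\int\frac{p(x)q(y)}{|x-y|^2}\,dx\,dy$ for nonnegative $p,q$ with disjoint support, each cross term equals $\scp{g_j}{g_k}+\scp{f_j}{f_k}-\scp{g_j}{f_k}-\scp{f_j}{g_k}$, where $\scp{g_j}{g_k}\le0$, $\scp{f_j}{f_k}\le0$, and $-\scp{g_j}{f_k}\le\tfrac1{\pi R^2}\|g_j\|_{L^1}\|f_k\|_{L^1}$. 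Summing the last bound over all $j\ne k$ and factorizing, the total spurious contribution is at most $\tfrac1{\pi R^2}\|G\|_{L^1}\|F\|_{L^1}$ with $G=\sum_j g_j$, $F=\sum_j f_j$; by Proposition~\ref{prop:L^1-estimate} applied to each $m^{(j)}$ and summed, $\|G\|_{L^1}\le C(\ell)\alpha^{2(2-p)/(2+p)}$ and $\|F\|_{L^1}\le C(\ell)$ (using that $J$ and the $\E_h(d_j)$ are bounded in terms of $\ell$), so this is $\le C(\ell)\alpha^\kappa R^{-2}$ with $\kappa>0$. For the gain I keep only $\scp{f_1}{f_J}$: the first and last pieces carry a genuine negative bump, so $\|f_1\|_{L^1},\|f_J\|_{L^1}\ge c(\ell)>0$, and since $M$ has diameter $\le C(\ell)R$, Lemma~\ref{lem:H^{1/2}} (or the kernel bound) gives $\scp{f_1}{f_J}\le-c(\ell)^2/(C(\ell)R^2)$. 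Discarding the remaining nonpositive $\scp{f_j}{f_k}$ and $\scp{g_j}{g_k}$, $\Sigma\le\bigl(C(\ell)\alpha^\kappa-c(\ell)^2/C(\ell)\bigr)R^{-2}\le-\tfrac12c(\ell)^2C(\ell)^{-1}R^{-2}$ once $\alpha=\arccos h$ is small enough; this fixes $H$. Substituting into the displayed bound and choosing $R$ large (legitimate since $J\le J(\ell)$) yields $E_h(M)<\sum_j\E_h(d_j)$, hence $\E_h(d)<\sum_j\E_h(d_j)$.

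The argument rests on three elementary facts. (a) $J$ is bounded in terms of $\ell$: every piece other than a bare $\alpha/\pi$ contributes at least $1-\alpha/\pi\ge\tfrac12$ to the sum $\ell-\alpha/\pi$, while the compatibility condition forces the bare pieces to alternate with pieces of the form $n-\alpha/\pi$ and hence to be at most $\ell-1$ in number; altogether $J\le3\ell$. (b) The first and last pieces are never the bare $\alpha/\pi$: writing $d_j=n_j+\epsilon_j\alpha/\pi$ with $\epsilon_j\in\{-1,0,1\}$, compatibility forces the nonzero $\epsilon_j$ to alternate in sign, and with $\sum_j\epsilon_j=-1$ (valid for $h$ near $1$) the first and last nonzero $\epsilon_j$ equal $-1$; thus each of $d_1,d_J$ is an integer or of the form $n-\alpha/\pi$ with $n\ge1$, and in either case the corresponding profile is forced through the value $-1$. (c) For such a profile the negative bump has $L^1$-mass bounded below uniformly for $h$ near $1$: the a priori bound $\|(\phi^{(j)})'\|_{L^\infty}\le C(\E_h(d_j))\le C(\ell)$ for minimizers spreads the bump over an interval of length $\gtrsim1$, and since $\tilde m^{(j)}$ agrees with $m^{(j)}$ near the core, this lower bound passes to $\tilde m^{(j)}$. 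I expect the main difficulty to be establishing (c) — a uniform lower bound $c(\ell)$ on the negative mass that survives as $h\to1$ — together with making the bookkeeping in $\Sigma$ airtight; in particular the factorization $\sum_{j\ne k}\|g_j\|_{L^1}\|f_k\|_{L^1}\le\|G\|_{L^1}\|F\|_{L^1}$, rather than a term‑by‑term estimate, is what keeps the $\alpha^\kappa$-small spurious part from being swamped by the (bounded but not small) number of cross terms.
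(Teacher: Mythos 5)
Your proof is correct and follows essentially the same strategy as the paper's: localize each minimiser via Corollary~\ref{cor:local}, glue the compactly supported pieces at mutual distance of order $R$, decompose the $\dot H^{1/2}$ cross-terms into positive and negative parts, bound the repulsive (positive-against-negative) contributions by $O(\alpha^\kappa R^{-2})$ using Proposition~\ref{prop:L^1-estimate}, and extract an attractive gain of order $-c(\ell)R^{-2}$ from the pair of outermost negative bumps, whose $L^1$-mass is bounded below because $d_1,d_J\in\N-\{0,\alpha/\pi\}$ forces a passage through $m_1=-1$; the only cosmetic differences are the spacing ($3jR$ vs.\ $6jR$) and whether one cites Lemma~\ref{lem:H^{1/2}} or the kernel formula directly. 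The step you flag as the main difficulty, point~(c), is in fact the easiest in the paper's treatment: the uniform energy bound gives $\|(\tilde m_1^{(j)})'\|_{L^2}\le\sqrt{2(C_1+1)}$, hence H\"older-$\tfrac12$ continuity, hence $\tilde m_1^{(j)}\le-\tfrac12$ on an interval of length $\gtrsim1$ around a zero of $\tilde m_2^{(j)}$; this is uniform in $h$ and avoids the $L^\infty$-bound on $\phi'$ you invoke, which would require the Euler--Lagrange equation and a separate argument that the bound survives the localization.
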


\begin{proof}
We may assume that
\[
\lim_{x_1 \to + \infty} m^{(j)}(x_1) = \lim_{x_1 \to - \infty} m^{(j + 1)}(x_1), \quad j = 1, \dotsc, J - 1.
\]
(This is because in the case $d_j\in \N$, we may replace $m_1^{(j)}(x_1)$ by $m_1^{(j)}(-x_1)$ and $m_2^{(j)}(x_1)$ by $-m_2^{(j)}(-x_1)$ for some values of $j$.) It is easy to see that there exists a constant $C_1 = C_1(\ell)$ such that
\[
\sum_{j = 1}^J E_h(m^{(j)}) \le C_1.
\]
By Corollary \ref{cor:local}, we may modify $m^{(j)}$
such that the support of $m_1^{(j)} - h$ becomes
compact, while changing the energy only by a small amount. More specifically, we find two numbers 
$R_0\geq 1$ and $\beta > 0$ such that for all $R \ge R_0$,
there exist $\tilde{m}^{(j)} \in \A_h(d_j)$ with
$\tilde{m}_1^{(j)} = h$ outside of $[-R, R]$,
while at the same time,
\[
E_h(\tilde{m}^{(j)}) \le E_h(m^{(j)}) + \frac{1}{R^{2 + \beta}}
\]
and $|\tilde{m}_1^{(j)} - h| \le |m_1^{(j)} - h|$ everywhere for $j = 1, \dotsc, J$.

Because $d \in \N- \alpha/\pi$, it follows that $d_1, d_J \in \N - \{0, \alpha/\pi\}$.
Since $\tilde{m}^{(j)} \in \A_h(d_j)$, this means that it contains a full transition on the half circle $\{z\in \Ss^1\, :\, z_1\leq 0\}$.
Furthermore, as $E_h(\tilde{m}^{(j)}) \le C_1+1$,
it is easy to
see\footnote{\label{foot}If $f=-(\tilde{m}_1^{(j)})_-$, then we may assume that $f(0)=1$. Since $\|f'\|_{L^2(\R)}\leq \sqrt{2C_1+1}$, then $f(x_1)\geq f(0)- \|f'\|_{L^2(\R)} |x_1|^{1/2} \geq 1-\sqrt{2C_1+1} |x_1|^{1/2}$, which proves \eqref{eqn:negative_part1}.}
that there exists a constant $c_0 = c_0(\ell) > 0$ with
\begin{equation} \label{eqn:negative_part1}
\bigl\|\bigl(\tilde{m}_1^{(j)} - h\bigr)_-\bigr\|_{L^1(\R)}\geq \bigl\|(\tilde{m}_1^{(j)})_-\bigr\|_{L^1(\R)} \ge c_0
\end{equation}
for $j = 1, J$. (This estimate is essential and the arguments work only for the degree $d\in \N-\alpha/\pi$ due to the ``sandwich" configuration created by the outermost transitions corresponding to $j=1$ and $j=J$.)

On the other hand, Proposition \ref{prop:L^1-estimate} (for $p = \frac{5}{3}$) implies that
there exists $C_2 = C_2(\ell)$ satisfying
\begin{equation} \label{eqn:positive_part}
\bigl\|\bigl(\tilde{m}_1^{(j)} - h\bigr)_+\bigr\|_{L^1(\R)} \le C_2 \alpha^{2/11}
\end{equation}
and
\begin{equation} \label{eqn:negative_part2}
\bigl\|\bigl(\tilde{m}_1^{(j)} - h\bigr)_-\bigr\|_{L^1(\R)} \le C_2
\end{equation}
for all $j = 1, \dotsc, J$.

Now we may define $m \colon \R \to \Ss^1$ by $m(x_1) = \tilde{m}^{(j)}(x_1 - 6jR)$
for $6jR - R \le x_1 \le 6jR + R$, where $j = 1, \dotsc, J$, and $m(x_1) = (\cos \alpha, \pm \sin \alpha)$
elsewhere (so that $m$ is continuous everywhere).
Then $m \in \A_h(d)$. It is clear that
\[
\int_{-\infty}^\infty \left(|m'|^2 + (m_1 - h)^2\right) \, dx_1 = \sum_{j = 1}^J \int_{-\infty}^\infty \left(|(\tilde{m}^{(j)})'|^2 + (\tilde{m}_1^{(j)} - h)^2\right) \, dx_1.
\]
Next we note that
\[
m_1(x_1) - h = \sum_{j = 1}^J \bigl(\tilde{m}_1^{(j)}(x_1 - 6jR) - h\bigr)_+ + \sum_{j = 1}^J \bigl(\tilde{m}^{(j)}_1(x_1 - 6jR) - h\bigr)_-.
\]
If $j \neq j'$, then the supports of the functions $(\tilde{m}^{(j)} - h)_\pm$ and $(\tilde{m}^{(j')} - h)_\pm$ are contained in intervals of length $2R$ each and are separated by at least $4R$. Therefore, we may apply Lemma \ref{lem:H^{1/2}} to estimate
\[
\scp{(\tilde{m}^{(j)} - h)_\pm}{(\tilde{m}^{(j')} - h)_\pm}_{\dot{H}^{1/2}(\R)}
\]
for any
such pair. Because of inequalities \eqref{eqn:negative_part1},
\eqref{eqn:positive_part}, and \eqref{eqn:negative_part2}, we obtain a constant $C_3 = C_3(\ell) > 0$
such that
\[
\|m_1 - h\|_{\dot{H}^{1/2}(\R)}^2 \le \sum_{j = 1}^J \bigl\|\tilde{m}_1^{(j)} - h\bigr\|_{\dot{H}^{1/2}(\R)}^2 + \frac{C_3 \alpha^{2/11}}{R^2} - \frac{1}{C_3 R^2}.
\]
Therefore,
\[
E_h(m) \le \sum_{j = 1}^J E_h(m^{(j)}) + \frac{J}{R^{2 + \beta}} + \frac{C_3 \alpha^{2/11}}{2R^2} - \frac{1}{2C_3 R^2}.
\]
If $\alpha$ is chosen sufficiently small and $R$ sufficiently large, then the above error becomes negative, which leads to the desired inequality.
\end{proof}

Because the preceding result only applies to winding numbers where minimisers
exist, we also need some information for the other cases.

\begin{proposition} \label{prop:inequality_repulsion}
Suppose that $h \in (0, 1]$ and $d \in D_h$. Then there exists
a partition $(d_1, \dotsc, d_J)$ of $d$ such that
\[
\E_h(d) \ge \sum_{j = 1}^J \E_h(d_j)
\]
and $\E_h(d_j)$ is attained for all $j = 1, \dotsc, J$.
\end{proposition}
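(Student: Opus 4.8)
The plan is to argue by strong induction on $d$ within the discrete set $D_h$. Fix $h\in(0,1]$. The positive elements of $D_h$ are uniformly discrete, so $D_h\cap(0,d]$ is finite for every $d$ and the induction is well founded; moreover every nontrivial partition of $d$ consists of (positive) parts in $D_h$ that are strictly smaller than $d$, since there are at least two of them. The base of the induction is taken care of automatically: whenever $\E_h(d)$ is attained, the trivial partition $(d)$ already satisfies the conclusion, and this covers in particular the smallest relevant degrees $\alpha/\pi$ and $1-\alpha/\pi$ (and $1$ when $h=1$), which admit no nontrivial partition and hence fall under Theorem~\ref{thm:concentration-compactness}.

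Now suppose $\E_h(d)$ is not attained. Then the hypothesis of Theorem~\ref{thm:concentration-compactness} must fail, i.e.\ there is a nontrivial partition $(d_1,\dotsc,d_J)$ of $d$ with $\E_h(d)\ge\sum_{j=1}^J\E_h(d_j)$. Each $d_j<d$, so the inductive hypothesis applies and gives, for every $j$, a partition $(d_{j,1},\dotsc,d_{j,K_j})$ of $d_j$ with $\E_h(d_j)\ge\sum_{i=1}^{K_j}\E_h(d_{j,i})$ and with every $\E_h(d_{j,i})$ attained. Concatenating these lists produces a tuple all of whose entries have attained energy and whose energies sum to at most $\sum_j\E_h(d_j)\le\E_h(d)$. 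Hence, once we know this tuple is again a partition of $d$, it is exactly the partition sought, and the induction closes.

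It remains to check this last, purely combinatorial, point: that the concatenation — after, if necessary, reversing the order of the entries of those sublists $(d_{j,1},\dotsc,d_{j,K_j})$ for which $d_j$ is an integer — is a partition of $d$. Such a reversal changes neither the parts, nor $\sum_i\E_h(d_{j,i})$, nor the attainability of the $\E_h(d_{j,i})$, and it is compatible with the combining procedure behind the notion of a partition, which already allows reversing the orientation of a profile of integer degree. Since $h>0$ we have $\alpha<\pi/2$, so $\alpha/\pi$ and $-\alpha/\pi$ are distinct modulo $1$; thus each non-integer element of $D_h$ carries a well-defined sign ($+$ if it is $\equiv\alpha/\pi$, $-$ if it is $\equiv-\alpha/\pi$ modulo $1$), and inspection of the definition shows that a tuple is a partition of its sum precisely when its non-integer entries, read from left to right, alternate in sign. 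This alternation holds inside each sublist. If $d_j$ is not an integer, its sublist has an odd number of non-integer entries, the first and last of which have the sign of $d_j$, so it splices into the ambient sequence without disturbing the alternation. If $d_j$ is an integer, its sublist has an even number (possibly zero) of non-integer entries, and reversing it toggles the sign carried by each of its two ends; because consecutive non-integer degrees in $(d_1,\dotsc,d_J)$ already have opposite signs, the orientations of the several integer sublists can be chosen consistently so that the whole concatenated sequence alternates. I expect this bookkeeping — though elementary — to be the main obstacle; the rest of the proof is just the induction combined with Theorem~\ref{thm:concentration-compactness}.
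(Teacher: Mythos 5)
Your proof is correct and takes essentially the same route as the paper: strong induction on $d \in D_h$, with Theorem~\ref{thm:concentration-compactness} applied contrapositively to produce a nontrivial sub-partition when $\E_h(d)$ is not attained, followed by applying the inductive hypothesis to each piece and concatenating. The only difference is that you spell out the combinatorial verification (the sign-alternation characterisation of partitions and the possible reversal of integer-degree sublists) that the paper dismisses with a one-line remark; note that when $h=1$, $\alpha=0$ and $D_1\subseteq\N_0$, so the alternation condition is vacuous and that case is immediate.
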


\begin{proof}
The set $D_h$ allows a proof by induction. The statement is true for $d = \alpha/\pi$ and
$d = 1 - \alpha/\pi$, because $\E_h(\alpha/\pi)$ and $\E_h(1 - \alpha/\pi)$ are attained \cite{Me1, Chermisi-Muratov:13, Ignat-Moser:17} and the trivial partition has the desired property.

Now fix $d \in D_h$ and assume that the statement is proved for all numbers $d' \in D_h$
with $d' < d$. If $\E_h(d)$ is attained, then we use the trivial partition again. Otherwise,
Theorem \ref{thm:concentration-compactness} implies that
there exists a nontrivial partition $(d_1, \dotsc, d_J)$ of $d$ such that
\[
\E_h(d) \ge \sum_{j = 1}^J \E_h(d_j).
\]
Then $d_j < d$ for all $j = 1, \dotsc, J$, and therefore, the induction assumption applies.
Thus for any $j \in \{1, \dotsc, J\}$, there exists a partition $(d_{j1}, \dotsc, d_{jK_j})$ of $d_j$
such that
\[
\E_h(d_j) \ge \sum_{k = 1}^{K_j} \E_h(d_{jk})
\]
and every $\E_h(d_{jk})$ is attained. Combining all the resulting partitions, we obtain a partition of $d$ with
the desired properties. (If $d_j \in \N$ for some $j = 1, \dotsc, J$, we may need to reorder
$d_{j1}, \dotsc, d_{jK_j}$ in order to achieve another partition of $d$, but that does not invalidate the argument.)
\end{proof}

\begin{proof}[Proof of Theorem \ref{thm:existence}]
Let $d \in \N - \alpha/\pi$.
According to Theorem \ref{thm:concentration-compactness}, it suffices to show
that
\[
\E_h(d) < \sum_{j = 1}^J \E_h(d_j)
\]
for any nontrivial partition $(d_1, \dotsc, d_J)$ of $d$.

Assuming that $\E_h(d_j)$ is attained for all $j = 1, \dotsc, J$, the inequality follows from
Proposition \ref{prop:inequality_composite_walls}, provided that $\alpha$ is sufficiently small.
If there is $j \in \{1, \dotsc, J\}$ such
that $\E_h(d_j)$ is \emph{not} attained, then we replace all such $d_j$ by a partition of $d_j$
with the properties of Proposition \ref{prop:inequality_repulsion}. Eventually, we are
in a situation where we can use Proposition \ref{prop:inequality_composite_walls}, and then the claim follows.
\end{proof}

\begin{proof}[Proof of Theorem \ref{thm:non-existence}]
We argue by contradiction here. Let $h \in (0, 1)$, to be chosen sufficiently close to $1$ eventually.
Suppose that $\ell \in \N$ and $d = \ell$ or $d = \ell + \alpha/\pi$.
Suppose that $E_h$ had a minimiser $m$ in $\A_h(d)$. We may assume without loss of generality that
\[
\lim_{x \to -\infty} m(x_1) = (\cos \alpha, -\sin \alpha).
\]
(This is automatic if $d = \ell + \alpha/\pi$. If $d = \ell$, we may have to replace $m_1(x_1)$
by $m_1(-x_1)$ and $m_2(x_1)$ by $-m_2(-x_1)$.) Choose a continuous lifting $\phi \colon \R \to \R$ such that
$m = (\cos \phi, \sin \phi)$. Then there exists $b \in \R$ such that
\[
\int_{-\infty}^b \phi'(x_1) \, dx_1 = 2\alpha.
\]
Indeed, there exists a largest number $b$ with this property, and if we choose
this one, then there also exist $a, c \in \R$ such that $b < a < c$ and
such that $m_1(a) = -1$, $m_1(c) = h$, and $m_1 < h$ in $(b, c)$.

An easy construction shows that there exists a number $C_1 = C_1(\ell)$ satisfying
$\E_h(d) \le C_1$. Since $m$ is assumed to be an energy minimiser in $\A_h(d)$, this means
that $E_h(m) \le C_1$. This implies a bound for $m_1$ in $C^{0, 1/2}([b, c])$, and it follows as in the footnote \ref{foot}
that
\be
\label{staru}
\int_b^c (h - m_1) \, dx_1 \ge C_2
\ee
for some constant $C_2 = C_2(\ell) > 0$.

Define the function
\[
m_1^+(x_1) = \begin{cases}
m_1(x_1) & \text{if $x_1 < b$ and $m_1(x_1) > h$}, \\
h & \text{otherwise}.
\end{cases}
\]
Then $m_1^+-h\geq 0$ everywhere. Further define
\[
m_1^-(x_1) = \begin{cases}
m_1(x_1) & \text{if $x_1 \ge b$ or $m_1(x_1) \le h$}, \\
h & \text{otherwise}.
\end{cases}
\]
Then $m_1 = m_1^+ + m_1^- - h$ and $(m_1^+ - h)(m_1^- - h) = 0$ everywhere. There exist $m_2^+, m_2^- \colon \R \to [-1, 1]$
such that
\[
m^+ = (m_1^+, m_2^+) \in \A_h(\alpha/\pi) \quad \text{and} \quad m^- = (m_1^-, m_2^-) \in \A_h(d - \alpha/\pi)
\]
by the choice of $b$ and the above definitions.

Now we compute
\[
E_h(m) = E_h(m^+) + E_h(m^-) + \scp{m_1^+}{m_1^-}_{\dot{H}^{1/2}(\R)}.
\]
Hence
\[
\E_h(d) \ge \E_h(\alpha/\pi) + \E_h(d - \alpha/\pi) + \scp{m_1^+}{m_1^-}_{\dot{H}^{1/2}(\R)}.
\]

Next we wish to estimate the quantity
\[
\scp{m_1^+}{m_1^-}_{\dot{H}^{1/2}(\R)} = \frac{1}{2\pi} \int_{-\infty}^\infty \int_{-\infty}^\infty \frac{(m_1^+(s) - m_1^+(t))(m_1^-(s) - m_1^-(t))}{(s - t)^2} \, ds \, dt.
\]
To this end, we first observe that $m_1^\pm$ may be replaced by $m_1^\pm - h$ without
changing the double integral. Furthermore, the construction guarantees that $m_1^+ - h = 0$ in $(b, \infty)$ and,
as mentioned before, that
$(m_1^+ - h)(m_1^- - h) = 0$ everywhere. Therefore,
\[
\begin{split}
\scp{m_1^+}{m_1^-}_{\dot{H}^{1/2}(\R)} & = -\frac{1}{\pi} \int_{-\infty}^\infty \int_{-\infty}^\infty \frac{(m_1^+(s) - h)(m_1^-(t) - h)}{(s - t)^2} \, ds \, dt \\
& = -\frac{1}{\pi} \int_{-\infty}^b \int_{-\infty}^b \frac{(m_1^+(s) - h)(m_1^-(t) - h)}{(s - t)^2} \, ds \, dt \\
& \quad -\frac{1}{\pi} \int_b^c \int_{-\infty}^b \frac{(m_1^+(s) - h)(m_1^-(t) - h)}{(s - t)^2} \, ds \, dt \\
& \quad -\frac{1}{\pi} \int_c^\infty \int_{-\infty}^b \frac{(m_1^+(s) - h)(m_1^-(t) - h)}{(s - t)^2} \, ds \, dt.
\end{split}
\]
If $t \le b$, then $m_1^-(t) - h \le 0$, whereas $m_1^+(s) - h \ge 0$ everywhere. Hence
\[
-\frac{1}{\pi} \int_{-\infty}^b \int_{-\infty}^b \frac{(m_1^+(s) - h)(m_1^-(t) - h)}{(s - t)^2} \, ds \, dt \ge 0.
\]
If $t \in (b, c)$ and $s \le b$, then $(s - t)^2 \le (s - c)^2$. Furthermore, in this case,
we still have the inequalities $m_1^+(s) - h \ge 0$ and $m_1^-(t) - h \le 0$. Hence
\[
\begin{split}
-\frac{1}{\pi} \int_b^c \int_{-\infty}^b \frac{(m_1^+(s) - h)(m_1^-(t) - h)}{(s - t)^2} \, ds \, dt & \ge \frac{1}{\pi} \int_{-\infty}^b \frac{m_1^+(s) - h}{(s - c)^2} \, ds \int_b^c (h - m_1^-(t)) \, dt \\
& \stackrel{\eqref{staru}}{\ge} \frac{C_2}{\pi} \int_{-\infty}^b \frac{m_1^+(s) - h}{(s - c)^2} \, ds.
\end{split}
\]
Finally, if $t \ge c$ and $s \le b$, then $(s - t)^2 \ge (s - c)^2$. Splitting $m_1^- - h$
into its positive part $(m_1^- - h)_+$ and its negative part $(m_1^- - h)_-$, we find that
\begin{multline*}
-\frac{1}{\pi} \int_c^\infty \int_{-\infty}^b \frac{(m_1^+(s) - h)(m_1^-(t) - h)}{(s - t)^2} \, ds \, dt \\
\ge -\frac{1}{\pi} \int_{-\infty}^b \frac{m_1^+(s) - h}{(s - c)^2} \, ds \int_c^\infty (m_1^-(t) - h)_+ \, dt.
\end{multline*}

Proposition \ref{prop:L^1-estimate} applies to $m$, because it is a minimiser in $\A_h(d)$.
This has consequences for $m_1^-$ as well; namely, there exists a number $C_3 = C_3(\ell)$
such that
\[
\int_c^\infty (m_1^- - h)_+ \, dt \le \int_{-\infty}^{+\infty} (m_1-h)_+\, dt\leq C_3 \alpha^{2/11}.
\]
Therefore, we obtain the inequality
\[
\E_h(d) \ge \E_h(\alpha/\pi) + \E_h(d - \alpha/\pi) + \frac{C_2 - C_3 \alpha^{2/11}}{\pi} \int_{-\infty}^b \frac{m_1^+(s) - h}{(s - c)^2} \, ds.
\]
If $1 - h$ is so small that $C_2 - C_3 \alpha^{2/11} > 0$, then the above error term is positive 
(as $m_1^+-h$ is nonnegative and not identically zero in $(-\infty, b)$). This
gives a direct contradiction to the subadditivity property in Remark \ref{rem:degree},
which asserts that $\E_h(d) \le \E_h(\alpha/\pi) + \E_h(d - \alpha/\pi)$ (as $\alpha/\pi$ and $d-\alpha/\pi$ are compatible neighbouring degrees).
\end{proof}

\subsection{The structure of minimisers}

For the proof of Theorem \ref{thm:structure}, we first need a bound for the
width of the profile of a minimiser $m$ of $E_h$ subject to a prescribed winding number.
We control this in terms of the distance between
any two points $a_1, a_2 \in \R$ with $m_1(a_1) = m_1(a_2) = -1$.

\begin{proposition} \label{prop:profile_width}
For every $\ell \in \N$ there exist $H \in (0, 1)$ and $\Lambda > 0$ such that for any
$h \in (H, 1)$ and any minimiser $m \in \A_h(\ell - \alpha/\pi)$ of $E_h$ in $\A_h(\ell - \alpha/\pi)$,
the inequality
\[
\diam \set{x_1 \in \R}{m(x_1) = (\pm 1, 0)} \le \Lambda
\]
holds true.
\end{proposition}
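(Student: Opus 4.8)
The plan is to argue by contradiction, combining the interior splitting used in the concentration--compactness argument with the decay estimates of Section~3 and the strict subadditivity contained in Proposition~\ref{prop:inequality_composite_walls}. Fix $\ell$, choose $H<1$ large enough that Theorems~\ref{thm:existence} and \ref{thm:non-existence} and Proposition~\ref{prop:inequality_composite_walls} apply, take $h\in(H,1)$, and let $m\in\A_h(\ell-\alpha/\pi)$ be a minimiser, so $E_h(m)=\E_h(\ell-\alpha/\pi)\le C(\ell)$. By Lemma~\ref{lem:no_unaccounted_walls} there are exactly $2\ell-1$ points $a_1<\dots<a_{2\ell-1}$ with $m_1(a_n)=(-1)^n$ and $m_2\neq 0$ off this set; fixing a lifting $\phi$, one may normalise $\phi(a_n)=n\pi$. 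Suppose $a_{2\ell-1}-a_1>\Lambda$. By the pigeonhole principle there is $n$ with $a_{n+1}-a_n>L:=\Lambda/(2\ell-2)$. Then, with $R\sim L/32$ and $c$ the midpoint of $[a_n+L/4,a_{n+1}-L/4]$, Lemma~\ref{lem:preliminary_decay_estimate} and Proposition~\ref{prop:improved_decay} applied on $(a_n,a_{n+1})$ show that $m$ stays $o_L(1)$-close on $[c-2R,c+2R]$ to a single well $w=(\cos\alpha,\varepsilon\sin\alpha)$ (the sign $\varepsilon$ being constant on $(a_n,a_{n+1})$), with $\phi$ there $o_L(1)$-close to the unique well phase $\phi_w\in(n\pi,(n+1)\pi)$.

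I would then cut $m$ at $c$: let $m^L$ agree with $m$ on $(-\infty,c]$, interpolate in the lifting from $\phi(c)$ to $\phi_w$ over $[c,c+1]$, and equal $w$ afterwards; define $m^R$ symmetrically on the right. Then $m^L\in\A_h(d_1)$, $m^R\in\A_h(d_2)$ with $d_1+d_2=\ell-\alpha/\pi$, and reading off $\phi_w$ one finds $\{d_1,d_2\}=\{k,\,k'-\alpha/\pi\}$ with integers $k,k'\ge 1$, $k+k'=\ell$; these are compatible neighbouring degrees in the sense of Remark~\ref{rem:degree}. Writing $f^L=m_1^L-h$ and $f^R=m_1^R-h$, an expansion of $\|f^L+f^R\|_{\dot{H}^{1/2}}^2$, together with $|\phi(c)-\phi_w|=o_L(1)$ and the decay estimates to bound the cost of the two interpolations, gives $E_h(m^L)+E_h(m^R)=E_h(m)-\scp{f^L}{f^R}_{\dot{H}^{1/2}}+o_L(1)$; since $E_h(m^L)\ge\E_h(d_1)$ and $E_h(m^R)\ge\E_h(d_2)$, this yields
\[
\scp{f^L}{f^R}_{\dot{H}^{1/2}}\le\E_h(\ell-\alpha/\pi)-\E_h(d_1)-\E_h(d_2)+o_L(1)=:-\delta+o_L(1).
\]
Here I would check $\delta>0$: the integer degree in $\{d_1,d_2\}$ is, by Proposition~\ref{prop:inequality_repulsion}, at least the energy of a partition into pieces of the form $\alpha/\pi$ and $k''-\alpha/\pi$ with attained minimal energies; arranging these (alternating types) and attaching the non-integer degree, one obtains a nontrivial partition of $\ell-\alpha/\pi$ with all minimal energies attained, so Proposition~\ref{prop:inequality_composite_walls} turns $\E_h(\ell-\alpha/\pi)\le\E_h(d_1)+\E_h(d_2)$ into a strict inequality.

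It remains to bound $\scp{f^L}{f^R}_{\dot{H}^{1/2}}$ from below. Since $|f^L|,|f^R|=o_L(1)$ on $[c-2R,c+2R]$, I would split each of $f^L,f^R$, using smooth cut-offs, into a ``near'' piece supported in $[c-2R,c+2R]$ and a ``far'' piece supported at distance $\gtrsim R$ from $c$. The near--near contribution is at most $\|f^L_{\mathrm{near}}\|_{\dot{H}^{1/2}}\|f^R_{\mathrm{near}}\|_{\dot{H}^{1/2}}=o_L(1)$ by the localised stray-field bound of Proposition~\ref{prop:improved_decay}; the far--far and mixed contributions are estimated, after decomposing into single-sign functions supported respectively to the left and to the right of $c$ and running the computation behind Lemma~\ref{lem:H^{1/2}} (the integrand reduces to a product of a left- and a right-supported factor divided by $(s-t)^2\gtrsim R^2$), by $R^{-2}$ times products of $L^1$-norms, which are $O(C(\ell))$ for the far pieces (Proposition~\ref{prop:L^1-estimate} applied to $m$) and $o_L(R)$ for the near pieces—hence $o_L(1)$ in all cases. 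Thus $\scp{f^L}{f^R}_{\dot{H}^{1/2}}\ge-o_L(1)$, and combined with the displayed inequality this forces $\delta\le o_L(1)$; taking $\Lambda$ large enough contradicts $\delta>0$, which proves the bound.

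The main obstacle I anticipate is the nonlocal accounting in the last two steps: performing the cut in the \emph{interior} of the profile (so that Corollary~\ref{cor:local}, tailored to the tails, does not apply directly) while keeping both the change of the $\dot{H}^{1/2}$-energy and the interaction term under quantitative control, and—separately—extracting a strictly positive, $\Lambda$-independent subadditivity gap $\delta$, which requires the compatibility rearrangement of partitions into attained pieces.
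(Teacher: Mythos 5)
Your approach is genuinely different from the paper's and, as written, has a real gap. The paper's proof argues along a sequence $h_i\nearrow 1$: it groups the $2\ell-1$ walls into clusters whose mutual separations diverge, takes weak local limits of the translated profiles, uses lower semicontinuity of $E_1$ on each cluster together with the upper semicontinuity of $h\mapsto\E_h(\ell-\alpha/\pi)$ (Lemma~\ref{lem:upper_semicontinuity}) to arrive at $\E_1(\ell)\ge\sum_n\E_1(d_n)$ for a nontrivial partition of $\ell$, and then contradicts the strict inequality at $h=1$ already established via Propositions~\ref{prop:inequality_composite_walls}--\ref{prop:inequality_repulsion} in the proof of Theorem~\ref{thm:existence}. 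The compactness lets it avoid any quantitative cut in the interior of the profile: the hard estimates are absorbed inside Proposition~\ref{prop:inequality_composite_walls}, which is only applied at $h=1$. Your approach instead fixes $h$, cuts at a point $c$ deep inside a long wall-free interval, and compares the energy cost of the cut and the cross term $\scp{f^L}{f^R}_{\dot{H}^{1/2}}$, controlled by Proposition~\ref{prop:improved_decay}, against a subadditivity gap $\delta$.

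The gap: you need $\Lambda$ independent of $h\in(H,1)$, and so you need $\delta(h)=\E_h(d_1)+\E_h(d_2)-\E_h(\ell-\alpha/\pi)$ to be bounded below by a positive constant uniformly as $h\to 1$, over the finitely many admissible pairs $(d_1,d_2)$. Propositions~\ref{prop:inequality_composite_walls} and~\ref{prop:inequality_repulsion} as stated give only $\delta(h)>0$ at each fixed $h$; you correctly flag the rearrangement into attained pieces as an obstacle, but not the $h$-uniformity of the gap, which is the more essential one. Without it, your conclusion yields only $\Lambda=\Lambda(h)$, which might diverge as $h\to1$. Uniformity can in fact be extracted: the proof of Proposition~\ref{prop:inequality_composite_walls} produces a negative error term of order $-R_0^{-2}$ with $R_0$ fixed once $\alpha$ is small, giving an explicit $\delta_0>0$; alternatively one can argue by contradiction along $h_i\to1$ and use continuity of $h\mapsto\E_h$, which is essentially what the paper does. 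A secondary issue is that the near--near bound $\|f^L_{\mathrm{near}}\|_{\dot{H}^{1/2}}\|f^R_{\mathrm{near}}\|_{\dot{H}^{1/2}}=o_L(1)$ needs more than a direct citation of Proposition~\ref{prop:improved_decay}, which controls a local Dirichlet integral of the harmonic extension rather than the $\dot{H}^{1/2}$-norm of a cut-off piece; passing from one to the other needs a commutator-type estimate, doable but not automatic. With these repairs your direct quantitative argument works, at the cost of considerably more bookkeeping than the paper's compactness route; what it buys is an explicit, non-asymptotic $\Lambda$ if the constants are tracked.
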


Before we prove this statement, however, we establish the following auxiliary result.

\begin{lemma} \label{lem:upper_semicontinuity}
The function $h \mapsto \E_h(\ell - \alpha/\pi)$ is upper semicontinuous
in $[0, 1]$.
\end{lemma}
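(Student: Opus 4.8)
The plan is to prove upper semicontinuity of $h \mapsto \mathcal{E}_h(\ell - \alpha/\pi)$ by exhibiting, for any fixed $h_0 \in [0,1]$ and any $\epsilon > 0$, a competitor configuration that works uniformly for all $h$ near $h_0$. First I would fix $h_0$ and pick a near-minimiser $m^0 \in \A_{h_0}(\ell - \alpha_0/\pi)$ with $E_{h_0}(m^0) < \mathcal{E}_{h_0}(\ell - \alpha_0/\pi) + \epsilon/2$, where $\alpha_0 = \arccos h_0$. Using Corollary \ref{cor:local} (or a direct truncation argument if $m^0$ is only a near-minimiser rather than an exact one), I may assume $m^0_1 - h_0$ has compact support, say in $[-R, R]$, so that $m^0$ equals the constant well $(\cos\alpha_0, \pm\sin\alpha_0)$ outside this interval. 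The subtlety is that the target winding number $\ell - \alpha/\pi$ and the wells $(\cos\alpha, \pm\sin\alpha)$ both move with $h$, so $m^0$ is not literally a competitor for $E_h$ when $h \neq h_0$.

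To fix this, I would modify $m^0$ near $\pm R$ by inserting two short ``correction'' arcs that rotate the phase by the small amount $\pm(\alpha - \alpha_0)$, so that the modified map $m^h$ lies in $\A_h(\ell - \alpha/\pi)$ and equals $(\cos\alpha, \pm\sin\alpha)$ outside a slightly larger interval. Concretely, writing $m^0 = (\cos\phi^0, \sin\phi^0)$ with lifting $\phi^0$, define $\phi^h$ to agree with $\phi^0$ on $[-R,R]$, to interpolate linearly over an interval of fixed length (say length $1$) on each side from the value $\phi^0(\pm R)$ to the adjusted boundary value, and to be constant beyond that. Since $|\alpha - \alpha_0| = |\arccos h - \arccos h_0| \to 0$ as $h \to h_0$, the extra exchange energy contributed by these arcs is $O((\alpha-\alpha_0)^2) \to 0$; the anisotropy and stray-field contributions of the correction are supported in a set of bounded measure and involve $|m^h_1 - h|$ which is $O(|\alpha-\alpha_0| + |h - h_0|)$ there, hence also negligible. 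On the bulk $[-R,R]$, $E_h(m^h\restriction_{[-R,R]})$ differs from $E_{h_0}(m^0\restriction_{[-R,R]})$ by $O(|h - h_0|)$ since the integrands depend continuously on $h$ and $R$ is fixed. The nonlocal term requires a little care because it is not localized, but using the double-integral representation \eqref{eqn:double_integral} together with the $L^1$ and $L^\infty$ control on $m^h_1 - h$ (all supported in a bounded interval whose size is controlled), one gets $\big|\|m^h_1 - h\|^2_{\dot H^{1/2}} - \|m^0_1 - h_0\|^2_{\dot H^{1/2}}\big| \to 0$ as $h \to h_0$.

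Putting these estimates together gives $\mathcal{E}_h(\ell - \alpha/\pi) \le E_h(m^h) \le E_{h_0}(m^0) + o(1) < \mathcal{E}_{h_0}(\ell - \alpha_0/\pi) + \epsilon$ for $h$ sufficiently close to $h_0$, which is exactly upper semicontinuity. The main obstacle, and the step I would spend the most care on, is the construction and estimation of the correction arcs in a way that simultaneously (a) produces the exactly correct winding number for each $h$, (b) lands in the correct pair of wells at $\pm\infty$, and (c) has energy cost that is genuinely $o(1)$ — in particular handling the interaction between the correction arcs and the bulk in the nonlocal term. One clean way to organize (c) is to note that all modifications keep $|m^h_1 - h| \le 2$ and supported in a fixed bounded set, so every offending cross term in the $\dot H^{1/2}$ double integral is bounded by a fixed multiple of the $L^1$-norm of the modification, which tends to $0$. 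A second, more bookkeeping-heavy point is the case $h_0 = 1$ (where $\alpha_0 = 0$ and the two wells at $\pm\infty$ coincide with $(1,0)$): there the correction is only needed on, say, the right side, and one should check the degree is $\ell - \alpha/\pi$ rather than $\ell + \alpha/\pi$, which is where the prescribed orientation convention enters.
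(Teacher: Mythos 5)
Your plan---truncate to compactly supported competitors and then show that the energy of such competitors depends continuously on $h$ by adjusting the boundary behaviour to the $h$-dependent wells---is essentially the paper's approach. The paper packages it more cleanly: it introduces a fixed $h$-independent class $\Phi_0$ of compactly supported liftings with boundary values $0$ and $2\pi\ell$, and the affine reparametrisation $\varphi \mapsto \varphi_\alpha = \bigl(1 - \tfrac{\alpha}{\pi\ell}\bigr)\varphi + \alpha$ at once places both ends on the correct wells $(\cos\alpha, \pm\sin\alpha)$ and produces degree $\ell - \alpha/\pi$, so that $\E_h(\ell - \alpha/\pi) = \inf_{\varphi\in\Phi_0} f_\varphi(h)$ becomes an infimum of continuous functions of $h$; this avoids the correction-arc bookkeeping entirely. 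One slip in your write-up: for $h_0 = 1$ a one-sided correction cannot work on its own, since leaving the other end at $\phi \in 2\pi\Z$ would give $m_1 \to 1 \ne h$ there and hence infinite $E_h$; you must adjust at both $\pm\infty$ (or, equivalently, shift the whole lifting by $+\alpha$ and then correct the right end by $-2\alpha$). Also, your ``direct truncation argument'' for a near-minimiser deserves more than a parenthetical: Corollary~\ref{cor:local} is for minimisers, and what is actually needed is Lemma~\ref{lem:localisation} (for $h=1$) or \eqref{eqn:old_localisation} (for $h<1$) applied to a general finite-energy $m$, together with a check that the error terms vanish as $R\to\infty$---the paper verifies this explicitly via $L^2$ (resp.\ $L^4/L^\infty$) bounds on the auxiliary functions $\omega$, $\sigma$, $\tau$.
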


It is not too difficult to show,  with arguments similar
to a previous paper \cite[Proposition 18]{Ignat-Moser:16}, that the function is actually continuous.
The above weaker statement, however, is sufficient for our purpose here.

\begin{proof}[Proof of Lemma \ref{lem:upper_semicontinuity}]
Let $\Phi_0$ be the set of all $\varphi \in H^1(\R)$ such that $\varphi \equiv 0$ in $(-\infty, -\sigma)$
and $\varphi \equiv 2\pi \ell$ in $[\sigma, \infty)$ for some $\sigma > 0$. Given $\varphi \in \Phi_0$, define
\[
\varphi_\alpha = \left(1 - \frac{\alpha}{\pi \ell}\right) \varphi + \alpha
\]
and
\[
f_\varphi(h) = E_h(\cos \varphi_\alpha, \sin \varphi_\alpha).
\]
As $\varphi$ has compact support, we deduce that $f_\varphi$ is a continuous function in $[0, 1]$ for every $\varphi \in \Phi_0$.
We claim that
\[
\E_h(\ell - \alpha/\pi) = \inf_{\varphi \in \Phi_0} f_\varphi(h).
\]
As the inequality $\E_h(\ell - \alpha/\pi) \le \inf_{\varphi \in \Phi_0} f_\varphi(h)$ is obvious, it suffices
 to prove the opposite inequality. To this end, if $m\in \A_h(\ell - \alpha/\pi)$, then we use the localisation in Lemma \ref{lem:localisation} for $h=1$ and \eqref{eqn:old_localisation} for $h<1$. More precisely, we consider the functions $\omega, \sigma$ and $\tau$ given in \eqref{star2} in terms of the lifting $\phi$ of $m$.
 
In the case $h<1$, we see that 
$\omega\in L^2(\R)$ (as $m_1-h\in L^2(\R)$),  $\sigma\in L^2(\R)$ (as $\phi'\in L^2(\R)$), and $\tau\in L^2(\R)$ (as $\frac{\partial v}{\partial x_2}(\blank, 0)$ is the Dirichlet-to-Neumann operator associated to $m_1-h$, see \cite[Section 1.6]{Ignat-Moser:17}, so that $\| \frac{\partial v}{\partial x_2}(\blank, 0)\|_{L^2(\R)}=\|m'_1\|_{L^2(\R)}$). 
Therefore, by \cite[Proposition 2.1]{Ignat-Moser:17}, for large $R>0$ we can find  a map $\tilde m_R\in \A_h(\ell - \alpha/\pi)$ that is constant outside $[-2R,2R]$ and $E_h(\tilde m_R)-E_h(m)\leq o(1)$ as $R\to \infty$, so the lifting of $\tilde m_R$ can be written as $\varphi_\alpha$ above.

In the case $h=1$, we apply the localisation in Lemma \ref{lem:localisation}, using an $L^4$-estimate for $\omega$ (corresponding to $m_1-1\in L^2(\R)$), as well as an $L^\infty$-estimate  for $\omega$ when estimating $\omega^2$ and $\omega \sigma$ in $(R, 2R)$ and $w^2 \sigma^2$ in $(R, \infty)$.

Thus $h \mapsto \E_h(\ell - \alpha/\pi)$ is an infimum of continuous functions and therefore upper semicontinuous.
\end{proof}

\begin{proof}[Proof of Proposition \ref{prop:profile_width}]
We argue by contradiction.
Suppose that there exists a sequence $h_i \nearrow 1$ such that we have a minimiser
$m^{(i)}$ of $E_{h_i}$ in $\A_{h_i}(\ell - \alpha_i/\pi)$ for every $i \in \N$,
where $\alpha_i = \arccos h_i\in (0,\pi/2)$, satisfying $\alpha_i\to 0$ and
\[
\diam \set{x_1 \in \R}{m^{(i)}(x_1) = ( \pm 1, 0)} \to \infty
\]
as $i \to \infty$. Recall that $E_{h_i}(m^{(i)})=\E_{h_i}(\ell-\alpha_i/\pi)\leq C(\ell)$ as $i \to \infty$.

By Lemma \ref{lem:no_unaccounted_walls}, there exist exactly $2\ell - 1$ values $x_1 \in \R$
with $m^{(i)}(x_1) = ( \pm 1, 0)$ for every $i \in \N$. We now want to arrange these
points into several groups such that the diameter of each group remains bounded,
but the distance between any two groups tends to infinity. Indeed,
after passing to a subsequence if necessary, we may find a number $N \ge 2$ such
that for every $i \in \N$, there exist $a_1^{(i)}, \dotsc, a_N^{(i)} \in \R$ with
$a_1^{(i)} < \dotsb < a_N^{(i)}$ and $m^{(i)}(a_n^{(i)}) = (\pm 1, 0)$ for $i \in \N$,
and such that furthermore,
\[
\lim_{i \to \infty} (a_{n + 1}^{(i)} - a_n^{(i)}) = \infty, \quad n = 1, \dotsc, N - 1,
\]
while
\[
\limsup_{i \to \infty} \, \sup \set{\min_{n = 1, \dotsc, N} |x_1 - a_n^{(i)}|}{x_1 \in \R \text{ with } m^{(i)}(x_1) = (\pm 1, 0)} < \infty.
\]

Define
\[
m_n^{(i)}(x_1) = m^{(i)}(x_1 - a_n^{(i)}), \quad i \in \N, \ n = 1, \dotsc, N.
\]
As $m^{(i)}$ has uniformly bounded energy as $i\to \infty$, each of the sequences $(m_n^{(i)})_{i \in \N}$ is bounded in $H^1((-R, R); \Ss^1)$
for every $R > 0$. Therefore, we may assume that $m_n^{(i)} \rightharpoonup \tilde{m}_n$
weakly in $H_\loc^1(\R; \Ss^1)$ and locally uniformly in $\R$ as $i \to \infty$. Then
\[
\begin{split}
E_1(\tilde{m}_n) & = \frac{1}{2} \lim_{R \to \infty} \Biggl(\int_{-R}^R \left(|\tilde{m}_n'|^2 + (\tilde{m}_{n1} - 1)^2\right) \, dx_1 \\
& \hspace{22mm} + \frac{1}{2\pi} \int_{-R}^R \int_{-R}^R \frac{(\tilde{m}_{n1}(s) - \tilde{m}_{n1}(t))^2}{(s - t)^2} \, ds \, dt\Biggr) \\
& \le \frac{1}{2} \lim_{R \to \infty} \liminf_{i \to \infty} \Biggl(\int_{-R}^R \left(|(m_n^{(i)})'|^2 + (m_{n1}^{(i)} - h_i)^2\right) \, dx_1 \\
& \hspace{33mm} + \frac{1}{2\pi} \int_{-R}^R \int_{-R}^R \frac{(m_{n1}^{(i)}(s) - m_{n1}^{(i)}(t))^2}{(s - t)^2} \, ds \, dt\Biggr).
\end{split}
\]
Hence
\[
\sum_{n = 1}^N E_1(\tilde{m}_n) \le \liminf_{i \to \infty} E_{h_i}(m^{(i)}).
\]
Set $d_n = \deg(\tilde{m}_n)$. 
Then $$\int_{-R}^R (m_n^{(i)})'\cdot (m_n^{(i)})^\perp \, dx_1\stackrel{i\to \infty}{\to} \int_{-R}^R \tilde m_n'\cdot \tilde m_n^\perp \, dx_1= \tilde \phi_n(R)-\tilde \phi_n(-R)=2\pi d_n+o(1)$$
as $R\to \infty$, where $\tilde \phi_n$ is a lifting of $\tilde m_n$. In other words, the degree carried by $m_n^{(i)}$ (corresponding of the group of transitions near $a_n^{(i)}$) is asymptotically given by $d_n$ as $i$ becomes very large. For fixed $i$, the degrees of each group of transitions of $m_n^{(i)}$ are a partition of $\ell-\alpha_i/\pi$, so we infer, letting $i\to \infty$, that $\sum_{n = 1}^N d_n = \ell$ (as $\alpha_i\to 0$) and $d_n\geq 0$. Some of these degrees $d_n$ may be zero; this is the case if, and only if, the group of transitions near $a_n^{(i)}$ has degree $\alpha_i/\pi$, which does \emph{not} apply to $n = 1$ and $n = N$ by Lemma \ref{lem:no_unaccounted_walls}. We eliminate those $n$, so that after relabelling the indices, we may assume that $(d_1, \dotsc, d_N)$ is a nontrivial partition of $\ell$.

Finally, the upper semicontinuity of Lemma \ref{lem:upper_semicontinuity}
implies that
\[
\E_1(\ell) \ge \liminf_{i \to \infty} \E_{h_i}(\ell - \alpha_i/\pi) = \liminf_{i \to \infty} E_{h_i}(m^{(i)}) \ge \sum_{n = 1}^N E_1(\tilde{m}_n) \ge \sum_{n = 1}^N \E_1(d_n).
\]

On the other hand, in the proof of Theorem \ref{thm:existence}, we have
seen that
\[
\E_1(\ell) < \sum_{n = 1}^N \E_1(d_n)
\]
for any nontrivial partition $(d_1, \dotsc, d_N)$ of $\ell$.
This contradiction concludes the proof.
\end{proof}

\begin{remark}
The above argument also proves for the case $h=1$ that for every $\ell\in \Z$, there exists a constant 
$\Lambda_\ell>0$ such that every minimizer $m$ of $E_1$ over the set $\A_1(\ell)$ has the property
\[
\diam \set{x_1 \in \R}{m(x_1) = (\pm 1, 0)} \le \Lambda_\ell.
\]
\end{remark}

The following lemma shows that the behaviour of a minimiser is consistent with the
statement of Theorem \ref{thm:structure} at least at the tails.

\begin{lemma} \label{lem:behaviour_at_tails}
Let $\ell \in \N$. Then there exists $H \in (0, 1)$ such that any $h \in (H, 1]$
has the following property. Suppose that $m \in \A_h(\ell - \alpha/\pi)$ minimises
$E_h$ in $\A_h(\ell - \alpha/\pi)$. If $a \in \R$ is such that $m_1(x_1) > -1$ for all $x_1 > a$,
then $m_1(x_1) \le h$ for all $x_1 > a$.
\end{lemma}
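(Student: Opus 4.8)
The plan is as follows. For $h=1$ the statement is vacuous, since $m_1\le 1=h$ automatically; so assume $h<1$ (equivalently $\alpha=\arccos h\in(0,\pi/2)$), and choose $H$ close enough to $1$. We may also assume that $a=\sup\set{x_1\in\R}{m_1(x_1)=-1}$: this supremum is finite and attained because, by Lemma~\ref{lem:no_unaccounted_walls}, the minimiser $m$ takes the value $-1$ (it has winding number $\ell-\alpha/\pi$ with $\ell\ge 1$) at exactly finitely many points; proving the claim for this $a$ gives it for every admissible $a$, since any $a'$ with $m_1>-1$ on $(a',\infty)$ satisfies $a'\ge a$. An elementary competitor gives $\E_h(\ell-\alpha/\pi)\le C_0(\ell)$, hence $E_h(m)\le C_0(\ell)$; in particular $\|m_1'\|_{L^2(\R)}$ is bounded and $m_1$ satisfies a uniform $C^{0,1/2}$-bound.

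First I would pin down the geometry of a lifting $\phi$ with $m=(\cos\phi,\sin\phi)$. Since the endpoints of $\phi$ lie in $\{\alpha,-\alpha\}+2\pi\Z$ and differ by $2\pi\ell-2\alpha$, for small $\alpha$ the only possibility (after adding a constant) is $\phi(-\infty)=\alpha$ and $\phi(+\infty)=2\pi\ell-\alpha=:c$. By Lemma~\ref{lem:no_unaccounted_walls}, $\phi$ meets $\pi\Z$ at exactly $2\ell-1$ points, each transversally; a standard winding/counting argument then forces $\phi$ to cross each of $\pi,2\pi,\dotsc,(2\ell-1)\pi$ exactly once and no other multiple of $\pi$. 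Hence $\phi\in(0,2\pi\ell)$ everywhere, $a$ is precisely the point where $\phi=(2\ell-1)\pi$, and $\phi\in((2\ell-1)\pi,2\pi\ell)$ on $(a,\infty)$ — i.e.\ on the right tail $\phi$ stays on the branch where $\cos$ is increasing, so there $m_1(x_1)>h$ if and only if $\phi(x_1)>c$.

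Now suppose for contradiction that $T:=\set{x_1>a}{m_1(x_1)>h}$ is nonempty (hence of positive measure), and set up the competitor $\tilde\phi=\phi$ on $(-\infty,a]$ and $\tilde\phi=\min(\phi,c)$ on $(a,\infty)$, $\tilde m=(\cos\tilde\phi,\sin\tilde\phi)$. Since $\phi(a)=(2\ell-1)\pi<c$, the function $\tilde\phi$ is continuous, $\deg\tilde m=\tfrac1{2\pi}(c-\alpha)=\ell-\alpha/\pi$, and by the previous paragraph $\tilde m_1-h=(m_1-h)_-$ on $(a,\infty)$ while $\tilde m_1-h=m_1-h$ on $(-\infty,a]$. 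Thus $(\tilde m_1-h)^2\le(m_1-h)^2$ pointwise and $|\tilde m'|^2=(\tilde\phi')^2\le(\phi')^2$ a.e., so the exchange and anisotropy energies do not increase (and $E_h(\tilde m)<\infty$, so $\tilde m\in\A_h(\ell-\alpha/\pi)$ once the stray field is controlled). It remains to show that the stray field energy strictly decreases. Writing $f=m_1-h$, $g=\tilde m_1-h$ and using the double-integral representation \eqref{eqn:double_integral}, the contributions to $\|g\|_{\dot H^{1/2}(\R)}^2-\|f\|_{\dot H^{1/2}(\R)}^2$ with both variables in $(a,\infty)$ are $\le 0$ (as $t\mapsto t_-$ is $1$-Lipschitz), and so is part of the mixed term; the remaining part of the mixed term is a positive multiple of
\[
\int_a^\infty (m_1-h)_+(y_1)\left(\int_{-\infty}^a \frac{m_1(x_1)-h}{(x_1-y_1)^2}\,dx_1\right)dy_1 .
\]
Here $(m_1-h)_+(y_1)>0$ forces $y_1\in T$, and then $y_1-a\ge\delta_0(\ell)>0$ (since $m_1$ passes from $-1$ at $a$ to above $h$, using the $C^{0,1/2}$-bound). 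Because $|x_1-y_1|\ge y_1-a$ for $x_1<a<y_1$, the inner integral is at most $\frac{1}{(y_1-a)^2}$ times $\|(m_1-h)_+\|_{L^1(\R)}$ minus a weighted amount of negative part of $m_1-h$ concentrated just left of $a$; the former is $\le C_3(\ell)\alpha^{2/11}$ by Proposition~\ref{prop:L^1-estimate} (with $p=\tfrac53$), while the latter is bounded below by a fixed $C_2(\ell)>0$ via a sandwich estimate using $m_1(a)=-1$ and the $C^{0,1/2}$-bound. For $\alpha$ small enough (i.e.\ $H$ close enough to $1$) the inner integral is therefore negative, so the mixed term is $<0$, the stray field energy strictly decreases, and $E_h(\tilde m)<E_h(m)$ — contradicting minimality. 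Hence $T=\emptyset$.

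The main obstacle is this last stray-field estimate: one must ensure that the small positive part of $m_1-h$ on $(-\infty,a)$ (small by Proposition~\ref{prop:L^1-estimate}) cannot overcome the fixed amount of negative part sitting just to the left of $a$ (forced by $m_1(a)=-1$), \emph{uniformly} in the position $y_1\in T$ of the bump we are removing; the crude decay $1/(x_1-y_1)^2\le 1/(y_1-a)^2$ for $x_1<a<y_1$ is exactly what makes this uniform. The geometric step — showing $\phi$ stays below $2\pi\ell$ on the right tail, so that the truncation $\min(\phi,c)$ really realises $\tilde m_1-h=(m_1-h)_-$ and never \emph{raises} $m_1$ — is the other point needing care, but it is a routine consequence of Lemma~\ref{lem:no_unaccounted_walls}.
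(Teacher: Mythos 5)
Your proof is correct and follows essentially the same strategy as the paper: you truncate the positive bumps of $m_1-h$ on $(a,\infty)$ to produce the competitor $\tilde m$ (your $\tilde m_1$ coincides with the paper's $m_1^-$), observe that the local terms do not increase, and show that the stray field energy strictly decreases using the $L^1$-bound of Proposition~\ref{prop:L^1-estimate} against a fixed ``sandwich'' contribution from the negative part of $m_1-h$ adjacent to $a$. The paper disposes of the stray field comparison by referring to the estimates in the proof of Theorem~\ref{thm:non-existence}; you instead write out the difference $\|\tilde m_1-h\|_{\dot H^{1/2}}^2-\|m_1-h\|_{\dot H^{1/2}}^2$ directly via the double-integral representation~\eqref{eqn:double_integral}, which amounts to the same cross-term calculation and is a useful unpacking of what the paper leaves implicit. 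Your preliminary geometric step (that the lifting $\phi$ crosses each of $\pi,2\pi,\dotsc,(2\ell-1)\pi$ exactly once, so the truncation $\min(\phi,c)$ really produces $\min(m_1,h)$ on $(a,\infty)$) is the explicit justification for what the paper takes for granted in defining $m_1^\pm$, and is a direct consequence of Lemma~\ref{lem:no_unaccounted_walls} as you say. One small wording issue: you describe the negative contribution as ``bounded below by a fixed $C_2(\ell)>0$,'' but both the positive bound $\|(m_1-h)_+\|_{L^1}/(y_1-a)^2$ and the negative sandwich contribution decay like $(y_1-a)^{-2}$ in the position $y_1$ of the bump; what is fixed is their \emph{ratio}, and the crude inequality $|x_1-y_1|\geq y_1-a$ together with $y_1-a\geq\delta_0(\ell)$ is exactly what keeps that ratio bounded uniformly in $y_1$, as you correctly identify as the crucial point.
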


\begin{proof}
If $h=1$, this is obvious. If $h<1$, the arguments are similar to the proof of Theorem \ref{thm:non-existence}. Without loss of generality we may
assume that $m_1(a) = -1$. We define
\[
m_1^+(x_1) = \begin{cases}
m_1(x_1) & \text{if $x_1 > a$ and $m_1(x_1) > h$}, \\
h & \text{otherwise},
\end{cases}
\]
and
\[
m_1^-(x_1) = \begin{cases}
m_1(x_1) & \text{if $x_1 \le a$ or $m_1(x_1) \le h$}, \\
h & \text{otherwise}.
\end{cases}
\]
Then this gives the decomposition $m_1 = m_1^+ + m_1^- - h$, and $(m_1^- - h)(m_1^+ - h)=0$ everywhere. Assume for contradiction that $m_1^+ \not\equiv h$. Then we denote by $b$ the minimum of the support of $m_1^+ - h$ (so $b>a$) and let $[c,b]$ be the connected component of the support of $m_1^- - h$ containing $b$. Then the same estimates as in the proof of Theorem \ref{thm:non-existence} apply.
Provided that $1 - h$ is sufficiently small, we can then construct $m_2^- \colon \R \to [-1,1]$ such that
$m^- = (m_1^-, m_2^-) \in \A_h(\ell -\alpha/\pi)$ satisfies $E_h(m^-) < E_h(m)$. But this is impossible,
as $m$ is assumed to be a minimiser.
\end{proof}

For the proof of Theorem \ref{thm:structure}, we also need the following H\"older estimate
for the derivatives of minimisers of $E_h$.

\begin{lemma} \label{lem:bound_for_derivative}
Let $\ell \in \N$. Then there exists a constant $C$ such that for all $h \in [0, 1]$,
any minimiser $m$ of $E_h$ in $\A_h(\ell - \alpha/\pi)$ satisfies
$|m'(s) - m'(t)| \le C\sqrt{|s - t|}$ for all $s, t \in \R$.
\end{lemma}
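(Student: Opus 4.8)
The plan is to reduce the claimed Hölder estimate for $m'$ to a \emph{uniform global} $L^2$-bound on $m''$, which I can extract directly from the Euler--Lagrange equation in the lifting variable. Write $m = (\cos\phi, \sin\phi)$ with $\phi$ a continuous lifting, so that $m' = \phi'\tau$ with $\tau = (-\sin\phi, \cos\phi)$ a unit vector. Since $\tau$ is $1$-Lipschitz as a function of $\phi$, for all $s, t \in \R$,
\[
|m'(s) - m'(t)| \le |\phi'(s) - \phi'(t)| + |\phi'(t)|\,|\phi(s) - \phi(t)| \le |\phi'(s) - \phi'(t)| + \|\phi'\|_{L^\infty(\R)}^2\,|s - t|.
\]
Hence it suffices to bound $\phi'$ in $C^{0,1/2}(\R)$ and in $L^\infty(\R)$, uniformly in $h$ and in the minimiser: for $|s - t| \le 1$ the right-hand side is then $\le C\sqrt{|s - t|}$, while for $|s - t| > 1$ we just use $|m'(s) - m'(t)| \le 2\|\phi'\|_{L^\infty(\R)} \le 2\|\phi'\|_{L^\infty(\R)}\sqrt{|s - t|}$.

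Next I would establish $\|\phi''\|_{L^2(\R)} \le C(\ell)$. First, $E_h(m) = \E_h(\ell - \alpha/\pi) \le C(\ell)$ uniformly in $h \in [0, 1]$: this is the elementary energy upper bound used repeatedly above (an explicit competitor, or iterated subadditivity from Remark \ref{rem:degree}, combining one transition of degree $1 - \alpha/\pi$ with $\ell - 1$ transitions of degree $1$). In particular $\|m'\|_{L^2(\R)}^2 \le 2C(\ell)$ and $\|m_1 - h\|_{L^2(\R)}^2 \le 2C(\ell)$. Since $m$ is a minimiser it is a smooth critical point, so $\phi$ solves \eqref{eqn:Euler-Lagrange_higher}, which in the dual form reads $\phi'' = \bigl(h - \cos\phi + \dd{v}{x_2}(\blank, 0)\bigr)\sin\phi$ with $v$ the harmonic extension of $m_1 - h$. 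Here $\dd{v}{x_2}(\blank, 0)$ is, up to sign, the half-Laplacian of $m_1 - h$, and the Dirichlet-to-Neumann isometry (see \cite[Section~1.6]{Ignat-Moser:17}, already invoked in the proof of Lemma \ref{lem:upper_semicontinuity}) gives $\|\dd{v}{x_2}(\blank, 0)\|_{L^2(\R)} = \|m_1'\|_{L^2(\R)} \le \|m'\|_{L^2(\R)}$. As $|h - \cos\phi| = |m_1 - h|$ and $|\sin\phi| \le 1$, pointwise $|\phi''| \le |m_1 - h| + |\dd{v}{x_2}(\blank, 0)|$, whence
\[
\|\phi''\|_{L^2(\R)} \le \|m_1 - h\|_{L^2(\R)} + \|m_1'\|_{L^2(\R)} \le 2\sqrt{2\,E_h(m)} \le 2\sqrt{2\,C(\ell)}.
\]

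Finally, $\phi' \in H^1(\R)$ with $\|\phi'\|_{H^1(\R)} \le C(\ell)$, so the one-dimensional Sobolev embedding gives $\|\phi'\|_{L^\infty(\R)}^2 \le 2\|\phi'\|_{L^2(\R)}\|\phi''\|_{L^2(\R)} \le C(\ell)$, as well as $|\phi'(s) - \phi'(t)| \le \|\phi''\|_{L^2(\R)}\sqrt{|s - t|} \le C(\ell)\sqrt{|s - t|}$. Substituting these two bounds into the inequality from the first paragraph yields the claim. This argument is short and robust; in particular it never uses the quadratic growth of $W$ near the wells, so it applies verbatim to $h = 1$. The only point requiring care is uniformity in $h$ as $h \to 1$ — that the constant in the Dirichlet-to-Neumann isometry and in the energy upper bound do not degenerate — and both are manifestly $h$-independent here, so I do not expect a serious obstacle.
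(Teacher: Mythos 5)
Your proposal is correct and takes essentially the same route as the paper's proof: both extract an $H^1$-bound on $\phi'$ from the Euler--Lagrange equation via the Dirichlet-to-Neumann isometry $\|u'(\blank,0)\|_{L^2} = \|m_1'\|_{L^2}$ together with the uniform energy bound $\E_h(\ell-\alpha/\pi)\le C(\ell)$, and then conclude by one-dimensional Sobolev embedding. The only (harmless) differences are cosmetic: you bound $|h-\cos\phi|$ by the $L^2$ function $|m_1-h|$ rather than by $2$, yielding a global $H^1(\R)$ bound on $\phi'$ where the paper uses a uniform $H^1(a,a+1)$ bound, and you spell out the elementary step from a H\"older bound on $\phi'$ to one on $m'=\phi'\tau$ that the paper leaves implicit.
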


\begin{proof}
If we write $m = (\cos \phi, \sin \phi)$, then we have the Euler-Lagrange equation
\eqref{eqn:Euler-Lagrange_higher}. This implies that
\[
|\phi''| \le 2 + |u'(\blank, 0)|.
\]
As $u'(\cdot, 0)$ is the Dirichlet-to-Neumann operator associated to $m_1-h$, see \cite[Section 1.6]{Ignat-Moser:17}, we have
\[
\|u'(\blank, 0)\|_{L^2(\R)} = \|m_1'\|_{L^2(\R)} \le \sqrt{2E_h(m)}.
\]
Thus
\[
\|\phi''\|_{L^2(a, a + 1)} + \|\phi'\|_{L^2(a, a + 1)} \le 2 + 2\sqrt{2E_h(m)}
\]
for any $a \in \R$.
Finally, the right-hand side is bounded by a constant depending only on $\ell$ by Lemma \ref{lem:upper_semicontinuity}.
The Sobolev embedding theorem then implies the desired inequality.
\end{proof}

\begin{proof}[Proof of Theorem \ref{thm:structure}]
We know by Lemma \ref{lem:no_unaccounted_walls} that for any minimiser $m$ of $E_h$
in $\A_h(\ell - \alpha/\pi)$, there exist $a_1, \dotsc, a_{2\ell - 1} \in \R$
with $a_1 < \dotsb < a_{2\ell - 1}$ such that
\[
m_1(a_n) = (-1)^n, \quad n = 1, \dotsc, 2\ell - 1.
\]
Lemma \ref{lem:behaviour_at_tails} implies that $m_1 \le h$ in $(-\infty, a_1)$
and in $(a_{2\ell - 1}, \infty)$. Thus it suffices to examine the behaviour in
the intervals $(a_n, a_{n + 1})$ for $n = 1, \dotsc, 2\ell - 2$.

Fix $n \in \{1, \dotsc, 2\ell - 2\}$. Without loss of generality we may assume
that $m_1(a_n) = 1$ and $m_1(a_{n + 1}) = -1$. We need to show that there exists
$b_n \in (a_n, a_{n + 1})$ such that $m_1 \ge h$ in $[a_n, b_n]$ and $m_1 \le h$
in $[b_n, a_{n + 1}]$. To this end, define
\[
b_n = \inf \set{x_1 \in (a_n, a_{n + 1})}{m_1(x_1) \le h}.
\]
It suffices to show that $m_1 \le h$ in $(b_n, a_{n + 1})$. We argue by contradiction
and assume that there exist $c_n, c_n' \ge b_n$ with $c_n' > c_n$ such that
$m_1(c_n) = m_1(c_n') = h$ but $m_1 > h$ in $(c_n, c_n')$.

Let
\begin{align*}
\hat{m}_1^+(x_1) & = \begin{cases}
m_1(x) & \text{if $x_1 \not\in (c_n, c_n')$ and $m_1(x_1) > h$}, \\
h & \text{otherwise},
\end{cases} \\
\tilde{m}_1(x_1) & = \begin{cases}
m_1(x) & \text{if $x_1 \in (c_n, c_n')$}, \\
h & \text{otherwise},
\end{cases}
\end{align*}
and $\hat{m}_1^- = \min\{m_1, h\}$. Then $m_1 = \hat{m}_1^+ + \tilde{m}_1 + \hat{m}_1^- - 2h$.
Furthermore, define
\begin{align*}
\hat{m}(x_1) & = \begin{cases}
m(x_1) & \text{if $x_1 \not\in (c_n, c_n')$}, \\
m(c_n) & \text{if $x_1 \in (c_n, c_n')$},
\end{cases} \\
\tilde{m}(x_1) & = \begin{cases}
m(x_1) & \text{if $x_1 \in (c_n, c_n')$}, \\
m(c_n) & \text{if $x_1 \not\in (c_n, c_n')$}.
\end{cases}
\end{align*}
Then
\[
\begin{split}
E_h(m) & = E_h(\hat{m}) + E_h(\tilde{m}) + \scp{\hat{m}_1}{\tilde{m}_1}_{\dot{H}^{1/2}(\R)} \\
& = E_h(\hat{m}) + E_h(\tilde{m}) + \scp{\hat{m}_1^+}{\tilde{m}_1}_{\dot{H}^{1/2}(\R)} + \scp{\hat{m}_1^-}{\tilde{m}_1}_{\dot{H}^{1/2}(\R)} \\
& = E_h(\hat{m}) + E_h(\tilde{m}) - \frac{1}{\pi} \int_{-\infty}^\infty (\tilde{m}_1(s) - h) \int_{-\infty}^\infty \frac{\hat{m}_1^+(t) + \hat{m}_1^-(t) - 2h}{(s - t)^2} \, dt \, ds.
\end{split}
\]
But since $m$ is a minimiser of $E_h$ for its winding number (which coincides with the
winding number of $\hat{m}$), and since $E_h(\tilde{m}) > 0$ by our assumption,
it follows that
\[
\int_{-\infty}^\infty (\tilde{m}_1(s) - h) \int_{-\infty}^\infty \frac{\hat{m}_1^+(t) + \hat{m}_1^-(t) - 2h}{(s - t)^2} \, dt \, ds > 0.
\]

Next we want to show that in fact, if $1 - h$ is sufficiently small, then
\begin{equation} \label{eqn:cross-terms}
\int_{-\infty}^\infty \frac{\hat{m}_1^+(t) + \hat{m}_1^-(t) - 2h}{(s - t)^2} \, dt \le 0
\end{equation}
for every $s \in (c_n, c_n')$. As $\tilde{m}_1(s) - h > 0$ in $(c_n, c_n')$ and
$\tilde{m}_1(s) - h = 0$ outside of $(c_n, c_n')$ by construction, this will give the desired
contradiction.

In order to prove \eqref{eqn:cross-terms}, let $C_1$ be the constant from
Lemma \ref{lem:bound_for_derivative}. By the choice of $c_n'$, we know that $m_1'(c_n') \le 0$.
For $x_1 > c_n'$, we conclude that $m_1'(x_1) \le C_1(x_1 - c_n')^{1/2}$. Integration then
yields the inequality $m_1(x_1) - h \le \frac{2C_1}{3}(x_1 - c_n')^{3/2}$ for $x_1 > c_n'$.
Thus if $t > c_n'$ is such that $m_1(t) \ge h$, then
\[
\hat{m}_1^+(t) - h \le \min\{C_2(t - c_n')^{3/2}, 1 - h\},
\]
where $C_2 = 2C_1/3$.
Given $s \in (c_n, c_n')$, we then see that $t-s\geq t-c_n'$ for $t>c_n'$ and
\[
\begin{split}
\int_{c_n'}^\infty \frac{\hat{m}_1^+(t) - h}{(s - t)^2} \, dt & \le C_2 \int_{c_n'}^{c_n' + ((1 - h)/C_2)^{2/3}} \frac{(t - c_n')^{3/2}}{(s - t)^2} \, dt + (1 - h)\int_{c_n' + ((1 - h)/C_2)^{2/3}}^{\infty} \frac{dt}{(s - t)^2} \\
& \le C_2 \int_0^{((1 - h)/C_2)^{2/3}} \frac{dt}{\sqrt{t}} + (1 - h) \int_{((1 - h)/C_2)^{2/3}}^\infty \frac{dt}{t^2} \\
& = 3C_2^{2/3} (1 - h)^{1/3}.
\end{split}
\]
Similarly, as $m_1'(c_n)\geq 0$, then $-m_1'(x_1)\leq C_1 (c_n-x_1)^{1/2}$ for every $x_1<c_n$, so that 
$m_1(x_1) - h \le \frac{2C_1}{3}(c_n-x_1)^{3/2}$ for $x_1 < c_n$. Therefore,
\[
\int_{-\infty}^{c_n} \frac{\hat{m}_1^+(t) - h}{(s - t)^2} \, dt \le 3C_2^{2/3} (1 - h)^{1/3}.
\]

On the other hand, Lemma \ref{lem:bound_for_derivative} also implies that there exists
a constant $R > 0$, depending only on $\ell$, such that $m_1 \le -\frac{1}{2}$
in $[a_n - R, a_n + R]$ for any odd number $n$. If $\Lambda$ is the constant from
Proposition \ref{prop:profile_width}, then this implies that
\[
\int_{-\infty}^\infty \frac{\hat{m}_1^-(t) - h}{(s - t)^2} \, dt \le \int_{a_1 - R}^{a_1 + R} \frac{\hat{m}_1^-(t) - h}{(s - t)^2} \, dt \le -\frac{1}{2} \int_{a_1 - R}^{a_1 + R} \frac{dt}{(s - t)^2} \le -\frac{R}{(\Lambda+R)^2}
\]
because $s\in (c_n, c_n')\subset (a_n, a_{n+1})$.
Therefore,
\[
\int_{-\infty}^\infty \frac{\hat{m}_1^+(t) + \hat{m}_1^-(t) - 2h}{(s - t)^2} \, dt \le 6C_2^{2/3} (1 - h)^{1/3} - \frac{R}{(\Lambda+R)^2}.
\]
If $1 - h$ is sufficiently small, then the right-hand side is negative, which
proves \eqref{eqn:cross-terms}. Thus the estimate gives
the desired contradiction and concludes the proof.
\end{proof}

\appendix

\section{Technical Lemmas}

Here we give a few auxiliary results that are required for our proofs but
are not specific to our problem.

\begin{lemma} \label{lem:auxiliary_inequality}
For any $\alpha \in [0, \frac{\pi}{2}]$,
\[
\inf_{\phi \in (0, \pi)} \frac{1 - \cos \alpha \cos \phi}{\sin^2 \phi} = \frac{1}{2} (1 + \sin \alpha)\geq \frac12.
\]
\end{lemma}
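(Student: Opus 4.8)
The plan is to set $h = \cos\alpha \in [0,1]$, reduce the claimed identity to an elementary inequality, and then recognise the relevant expression as a perfect square. Concretely, for $\phi \in (0,\pi)$ I would consider the difference
\[
\frac{1 - \cos\alpha\cos\phi}{\sin^2\phi} - \frac{1}{2}(1 + \sin\alpha)
= \frac{2(1 - \cos\alpha\cos\phi) - (1 + \sin\alpha)\sin^2\phi}{2\sin^2\phi},
\]
which is well-defined since $\sin\phi \neq 0$ on $(0,\pi)$. So it suffices to show that the numerator is nonnegative. Writing $\sin^2\phi = 1 - \cos^2\phi$ and abbreviating $c = \cos\phi$, the numerator expands to
\[
(1 - \sin\alpha) - 2\cos\alpha\, c + (1 + \sin\alpha) c^2 .
\]

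The key observation is that this quadratic in $c$ has discriminant $4\cos^2\alpha - 4(1+\sin\alpha)(1-\sin\alpha) = 4\cos^2\alpha - 4(1 - \sin^2\alpha) = 0$. Hence it is a perfect square: using $\sqrt{1+\sin\alpha}\,\sqrt{1-\sin\alpha} = \sqrt{1 - \sin^2\alpha} = \cos\alpha$, we have
\[
(1 + \sin\alpha) c^2 - 2\cos\alpha\, c + (1 - \sin\alpha)
= \left(\sqrt{1+\sin\alpha}\, c - \sqrt{1-\sin\alpha}\right)^2 \ge 0 .
\]
This proves $\frac{1 - \cos\alpha\cos\phi}{\sin^2\phi} \ge \frac{1}{2}(1 + \sin\alpha)$ for every $\phi \in (0,\pi)$, so the infimum is at least $\frac{1}{2}(1 + \sin\alpha)$.

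For the reverse inequality I would exhibit a minimising value (or minimising sequence) of $\phi$. The square above vanishes precisely when $c = \cos\phi = \sqrt{(1-\sin\alpha)/(1+\sin\alpha)}$, a number in $[0,1)$ for $\alpha \in (0, \tfrac{\pi}{2}]$; choosing $\phi \in (0,\pi)$ with this cosine makes the numerator zero, so the bound $\frac{1}{2}(1+\sin\alpha)$ is attained. In the borderline case $\alpha = 0$ (where $\cos\alpha = 1$) this $\cos\phi$ equals $1$, so the value is not attained in $(0,\pi)$; there one instead lets $\phi \to 0^+$ and uses $\frac{1-\cos\phi}{\sin^2\phi} = \frac{1}{1+\cos\phi} \to \frac12 = \frac12(1+\sin 0)$ to get the infimum. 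Either way the infimum equals $\frac{1}{2}(1+\sin\alpha)$. Finally, $\frac{1}{2}(1+\sin\alpha) \ge \frac{1}{2}$ is immediate from $\sin\alpha \ge 0$ on $[0, \tfrac{\pi}{2}]$.

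I do not anticipate a genuine obstacle here; the only point requiring a little care is spotting that the quadratic in $\cos\phi$ has vanishing discriminant (equivalently, doing the one-variable optimisation cleanly), after which everything is routine algebra, plus the minor bookkeeping of the non-attained case $\alpha = 0$.
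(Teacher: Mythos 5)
Your proof is correct. It takes a genuinely different route from the paper: there, the authors treat $f(\phi) = (1-\cos\alpha\cos\phi)/\sin^2\phi$ as a function on $(0,\pi)$, observe that $f \to \infty$ at both endpoints, compute $f'$, solve the resulting quadratic in $\cos\phi$ (obtaining $\cos\phi = \tfrac{1}{\cos\alpha} - \tan\alpha$ as the unique admissible critical point), and evaluate $f$ there. You instead subtract the conjectured infimum, clear denominators, and recognise the numerator as the perfect square $\bigl(\sqrt{1+\sin\alpha}\,\cos\phi - \sqrt{1-\sin\alpha}\bigr)^2$. The two are consistent — the minimiser $\cos\phi = \sqrt{(1-\sin\alpha)/(1+\sin\alpha)}$ you find equals the paper's $\tfrac{1-\sin\alpha}{\cos\alpha}$ after rationalising — but yours is the more elementary argument: it avoids differentiation entirely, gives the lower bound and the location of the minimum in one step, and makes the vanishing discriminant (hence exactness of the constant $\tfrac12(1+\sin\alpha)$) transparent. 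You also correctly isolate $\alpha = 0$ as the only case where the infimum is not attained, matching the paper's separate treatment of that case.
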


\begin{proof}
For $\alpha = \frac{\pi}{2}$, this is obvious. For $\alpha = 0$, it follows
from the observation that
\[
\frac{1 - \cos \phi}{\sin^2 \phi} = \frac{1}{1 + \cos \phi}.
\]
We now assume that $0 < \alpha < \frac{\pi}{2}$.
Consider the function $f \colon (0, \pi) \to \R$ defined by
\[
f(\phi) = \frac{1 - \cos \alpha \cos \phi}{\sin^2 \phi}, \quad \phi\in (0, \pi).
\]
Then clearly $\lim_{\phi \searrow 0} f(\phi) = \lim_{\phi \nearrow \pi} f(\phi) = \infty$.
We compute
\[
f'(\phi) = \frac{\cos \alpha \sin^2 \phi - 2 \cos \phi (1 - \cos \alpha \cos \phi)}{\sin^3 \phi},  \quad \phi\in (0, \pi).
\]
At any zero $\phi$ of $f'$, we have the identity
\[
\begin{split}
0 & = \cos \alpha \sin^2 \phi - 2 \cos \phi + 2\cos \alpha \cos^2 \phi \\
& = \cos \alpha \cos^2 \phi - 2 \cos \phi + \cos \alpha.
\end{split}
\]
Regarding this as a quadratic equation in $\cos \phi$, we see that
\[
\cos \phi = \frac{1}{\cos \alpha} \pm \tan \alpha.
\]
But only one of these is in $[-1, 1]$, and it follows that $\cos \phi = \frac{1}{\cos \alpha} - \tan \alpha$.
Therefore, the function $f'$ has only one zero and this is the unique minimum of $f$.
At this point, we compute that $f(\phi) = \frac{1}{2}(1 + \sin \alpha)$.
\end{proof}

\begin{lemma} \label{lem:L^2-estimate_for_v}
Let $p \in (1, 2]$ and $f \in L^p(\R) \cap \dot{H}^{1/2}(\R)$. Furthermore, let $v \in \dot{H}^1(\R_+^2)$ be
the unique solution of $\Delta v = 0$ in $\R_+^2$ with $v(\blank, 0) = f$ in $\R$. Let $R > 0$. Then
\[
\int_0^R \int_{-\infty}^\infty v^2 \, dx_1 \, dx_2 \le \left(\frac{8p}{p + 2}\right)^{3 - 2/p} \frac{pR^{2 - 2/p}}{2\pi^2 (p - 1)} \|f\|_{L^p(\R)}^2.
\]
Furthermore, if $f \in L^1(\R)$ and $I_R=(\frac1R, R)$ (for $R\geq 1$) or $I_R=(R, \frac1R)$ (for $R\leq 1$), then
\[
\int_{I_R}\int_{-\infty}^\infty v^2 \, dx_1 \, dx_2 \le \frac{16}{3\pi^2} |\log R| \|f\|_{L^1(\R)}^2,
\]
and if $f \in L^\infty(\R)$, then
\[
\int_0^{1/R} \int_{-R}^R v^2 \, dx_1 \, dx_2 \le 2 \|f\|_{L^\infty(\R)}^2.
\]
\end{lemma}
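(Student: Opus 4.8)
The plan is to reduce all three inequalities to the explicit Poisson representation of the harmonic extension, combined with Young's convolution inequality and elementary bounds for the $L^q$-norms of the Poisson kernel. Throughout, $v(x_1, x_2) = (P_{x_2} \ast f)(x_1)$ with $P_t(s) = \frac{1}{\pi}\frac{t}{s^2 + t^2}$; under the hypotheses this is precisely the representative of the $\dot{H}^1(\R_+^2)$-solution with the right decay, and it is well defined because $f$ lies in $L^p(\R)$ (respectively $L^1(\R)$, $L^\infty(\R)$). A scaling substitution $s = tu$ gives, for every $q \ge 1$,
\[
\|P_t\|_{L^q(\R)}^q = \frac{t^{1-q}}{\pi^q}\int_{\R}\frac{du}{(1+u^2)^q},
\]
and I would bound the remaining integral by splitting at $|u| = 1$, using $(1+u^2)^q \ge 1$ on $[-1,1]$ and $(1+u^2)^q \ge u^{2q}$ for $|u| \ge 1$ (valid since $q > \frac12$), to obtain $\int_{\R}(1+u^2)^{-q}\,du \le 2 + \frac{2}{2q-1}$. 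In particular $\|P_t\|_{L^1(\R)} = 1$.

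For the first inequality I would apply Young's inequality with exponents satisfying $\frac12 = \frac1r + \frac1p - 1$, that is $r = \frac{2p}{3p-2} \in [1,2)$, to get $\|v(\blank, x_2)\|_{L^2(\R)} \le \|P_{x_2}\|_{L^r(\R)}\|f\|_{L^p(\R)}$. Since $\frac{2}{r} = 3 - \frac{2}{p}$, the kernel bound gives $\|P_{x_2}\|_{L^r(\R)}^2 \le \frac{1}{\pi^2}\bigl(2 + \frac{2}{2r-1}\bigr)^{3-2/p}x_2^{1-2/p}$, and a short computation shows $2 + \frac{2}{2r-1} = \frac{8p}{p+2}$. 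Because $p > 1$ the power $x_2^{1-2/p}$ is integrable near $0$, with $\int_0^R x_2^{1-2/p}\,dx_2 = \frac{p}{2(p-1)}R^{2-2/p}$, and integrating the above in $x_2$ over $(0,R)$ produces exactly the stated constant $\bigl(\frac{8p}{p+2}\bigr)^{3-2/p}\frac{pR^{2-2/p}}{2\pi^2(p-1)}\|f\|_{L^p(\R)}^2$. For the second inequality I would instead take $r = 2$ (the endpoint $p = 1$), use $\|P_{x_2}\|_{L^2(\R)}^2 \le \frac{8}{3\pi^2 x_2}$ (from $\int_{\R}(1+u^2)^{-2}\,du \le 2 + \frac23 = \frac83$), and integrate $\frac{1}{x_2}$ over $I_R$, noting that $\int_{I_R}\frac{dx_2}{x_2} = 2|\log R|$ both for $R \ge 1$ and for $R \le 1$; this gives $\frac{16}{3\pi^2}|\log R|\,\|f\|_{L^1(\R)}^2$. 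For the third inequality, $\|P_{x_2}\|_{L^1(\R)} = 1$ yields the pointwise bound $|v(x_1,x_2)| \le \|f\|_{L^\infty(\R)}$, and integrating over the rectangle $(-R,R)\times(0,1/R)$, of area $2$, finishes the proof.

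I do not expect a genuine obstacle here: the argument is essentially bookkeeping. The only points requiring a little care are the identification of $v$ with the Poisson integral of $f$ (so that the convolution estimates apply) and keeping track of the exponents and constants so that the precise bounds come out; one should also note that $v(\blank,x_2) \in L^2(\R)$ for a.e.\ $x_2 > 0$ before manipulating its norm, which is automatic from Young's inequality since $P_{x_2} \in L^r(\R)$ with $r \in [1,2]$ and $f \in L^p(\R)$.
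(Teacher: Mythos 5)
Your proof is correct and follows essentially the same route as the paper: identify $v$ with the Poisson integral of $f$, apply Young's convolution inequality with the conjugate exponent $r=\tfrac{2p}{3p-2}$, and bound $\|P_{x_2}\|_{L^r(\R)}$ (your scaling-then-split-at-$|u|=1$ estimate gives the same $\tfrac{4q}{2q-1}$ constant that the paper obtains by splitting the $t$-integral at $t=x_2$), before integrating in $x_2$; the $L^1$ and $L^\infty$ cases are likewise handled in the same way, with the $L^\infty$ bound coming from $\|P_t\|_{L^1}=1$ in your version versus the maximum principle in the paper's, which are equivalent.
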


\begin{proof}
By the Poisson formula,
\[
v(x_1, x_2) = \frac{x_2}{\pi} \int_{-\infty}^\infty \frac{f(t)}{(t - x_1)^2 + x_2^2} \, dt.
\]
If we write
\[
g_{x_2}(t) = \frac{x_2}{\pi(t^2 + x_2^2)},
\]
then this becomes $v(x_1, x_2) = (f * g_{x_2})(x_1)$.

We allow also the case $p = 1$ for the moment, so $p \in [1, 2]$.
Set $q = \frac{2p}{3p - 2}\in [1,2]$, so that
$\frac{1}{p} + \frac{1}{q} = \frac{3}{2}$. Then Young's convolution inequality implies that
\[
\left\|f * g_{x_2}\right\|_{L^2(\R)} \le \|f\|_{L^p(\R)} \|g_{x_2}\|_{L^q(\R)}.
\]
We estimate
\[
\begin{split}
\int_{-\infty}^\infty \left|g_{x_2}(t)\right|^q \, dt & =  \frac{2x_2^q}{\pi^q} \int_0^\infty \frac{dt}{(t^2 + x_2^2)^q} \\
& \le \frac{2x_2^q}{\pi^q} \left(\int_0^{x_2} \frac{dt}{x_2^{2q}} + \int_{x_2}^\infty \frac{dt}{t^{2q}}\right) \\
& = \frac{4q x_2^{1 - q}}{\pi^q (2q - 1)} = \frac{8p x_2^{1 - q}}{\pi^q (p + 2)}.
\end{split}
\]
Let $C_1 = \frac{8p}{\pi^q (p + 2)}$. We now treat the cases $p \in (1, 2]$
and $p = 1$ separately.

\begin{severalcases}

\case $p \in (1, 2]$. Then $q < 2$. For any $R > 0$, we have
\[
\begin{split}
\int_0^R \int_{-\infty}^\infty v^2 \, dx_1 \, dx_2 & \le \|f\|_{L^p(\R)}^2 \int_0^R \|g_{x_2}\|_{L^q(\R)}^2 \, dx_2 \\
& \le C_1^{2/q} \|f\|_{L^p(\R)}^2 \int_0^R x_2^{2/q - 2} \, dx_2 \\
& = C_1^{2/q} \frac{qR^{2/q - 1}}{2 - q} \|f\|_{L^p(\R)}^2 \\
& = C_1^{3 - 2/p} \frac{pR^{2 - 2/p}}{2p - 2} \|f\|_{L^p(\R)}^2.
\end{split}
\]
This is the first inequality of the lemma.

\case $p = 1$. Then
\[
\begin{split}
\int_{I_R} \int_{-\infty}^\infty v^2 \, dx_1 \, dx_2 & \le \int_{I_R}  \|f\|_{L^1(\R)}^2 \|g_{x_2}\|_{L^2(\R)}^2 \, dx_2 \\
& \le \frac{8}{3\pi^2} \|f\|_{L^1(\R)}^2 \int_{I_R}  \frac{dx_2}{x_2} \\
& = \frac{16}{3\pi^2} \|f\|_{L^1(\R)}^2 |\log R|,
\end{split}
\]
which proves the second inequality.
\end{severalcases}

Finally, the third inequality in the lemma is an obvious consequence of the maximum principle, which implies that $\sup_{\R^2_+}|v|\leq \sup_{\R}|f|$.
\end{proof}

\begin{lemma} \label{lem:decay_implies_L^1_estimate}
Let $\sigma > 1$. Suppose that $\psi \colon (1, \infty) \to [0, \infty)$ is an integrable
function such that
\[
\int_R^\infty \psi^2 \, dt \le R^{-\sigma}
\]
for all $R \ge 1$. Then for any $R \ge 1$,
\[
\int_R^\infty \psi \, dt \le \frac{2\sqrt{\sigma R^{1 - \sigma}}}{\sigma - 1}.
\]
\end{lemma}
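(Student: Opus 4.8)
The plan is to deduce the $L^1$-bound from the $L^2$-decay hypothesis by a weighted Cauchy--Schwarz inequality, with one free exponent that is optimised at the end; the optimal choice will turn out to reproduce the constant $2\sqrt{\sigma}/(\sigma-1)$ exactly. Fix $R\ge 1$ and a parameter $s\in(\tfrac12,\tfrac\sigma2)$, which is a nonempty range precisely because $\sigma>1$. Writing $\psi(t)=\bigl(\psi(t)t^{s}\bigr)\,t^{-s}$ and applying Cauchy--Schwarz on $(R,\infty)$ gives
\[
\int_R^\infty\psi\,dt\le\Bigl(\int_R^\infty\psi^2t^{2s}\,dt\Bigr)^{1/2}\Bigl(\int_R^\infty t^{-2s}\,dt\Bigr)^{1/2}=\Bigl(\int_R^\infty\psi^2t^{2s}\,dt\Bigr)^{1/2}\Bigl(\frac{R^{1-2s}}{2s-1}\Bigr)^{1/2},
\]
the last integral being finite because $2s>1$.

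The substantive step is to control the weighted second moment $\int_R^\infty\psi^2t^{2s}\,dt$ using only the hypothesis $\int_R^\infty\psi^2\,dt\le R^{-\sigma}$. I would set $G(t)=\int_t^\infty\psi^2\,d\tau$, which is absolutely continuous with $G'=-\psi^2$ and satisfies $0\le G(t)\le t^{-\sigma}$ for $t\ge1$; integrating by parts on $(R,T)$ and letting $T\to\infty$ (the boundary term $G(T)T^{2s}\le T^{2s-\sigma}$ tends to $0$ since $2s<\sigma$) yields
\[
\int_R^\infty\psi^2t^{2s}\,dt=G(R)R^{2s}+2s\int_R^\infty G(t)\,t^{2s-1}\,dt\le R^{2s-\sigma}+2s\int_R^\infty t^{2s-1-\sigma}\,dt=\frac{\sigma}{\sigma-2s}\,R^{2s-\sigma},
\]
where convergence of the last integral again uses $2s<\sigma$. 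Plugging this back in gives
\[
\int_R^\infty\psi\,dt\le R^{(1-\sigma)/2}\sqrt{\frac{\sigma}{(\sigma-2s)(2s-1)}}.
\]

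It remains to choose $s$ to make the constant as small as possible, i.e. to maximise $(\sigma-2s)(2s-1)$ over $s\in(\tfrac12,\tfrac\sigma2)$. As a quadratic in $u=2s$ this is maximised at $u=\tfrac{\sigma+1}2$, that is $s=\tfrac{\sigma+1}4$ (which lies in the admissible range exactly when $\sigma>1$), with maximal value $\tfrac{(\sigma-1)^2}4$. With this choice the coefficient becomes $\sqrt{4\sigma/(\sigma-1)^2}=2\sqrt{\sigma}/(\sigma-1)$, and hence $\int_R^\infty\psi\,dt\le 2\sqrt{\sigma R^{1-\sigma}}/(\sigma-1)$, as claimed. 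The only point that needs a little care is the justification of the integration by parts and the vanishing of the boundary term at infinity, both of which are immediate from the inequality $G(t)\le t^{-\sigma}$ together with the constraints $\tfrac12<s<\tfrac\sigma2$; the rest of the argument is routine arithmetic, so I do not anticipate a genuine obstacle.
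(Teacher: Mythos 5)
Your proof is correct, and it follows essentially the same strategy as the paper: a weighted Cauchy--Schwarz (writing $\psi = (\psi t^s)\,t^{-s}$ with a free exponent), a bound on the weighted second moment $\int_R^\infty \psi^2 t^{2s}\,dt$ obtained from the tail hypothesis, and then an optimisation of the exponent at $s=(\sigma+1)/4$ (equivalently $\omega=2s=(\sigma+1)/2$), giving the constant $2\sqrt{\sigma}/(\sigma-1)$. The only cosmetic difference is how the weighted moment is bounded: the paper writes $t^\omega/\omega = \int_R^t s^{\omega-1}\,ds + R^\omega/\omega$ and uses Fubini, whereas you introduce $G(t)=\int_t^\infty\psi^2\,d\tau$ and integrate by parts; these are dual formulations of the same step.
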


\begin{proof}
Let $\omega \in (1, \sigma)$. We estimate
\[
\begin{split}
\int_R^\infty \psi \, dt & \le \left(\int_R^\infty t^{-\omega} \, dt\right)^{1/2} \left(\int_R^\infty t^\omega \psi^2 \, dt\right)^{1/2} \\
& = \left(\frac{\omega R^{1 - \omega}}{\omega - 1} \int_R^\infty \left(\int_R^t s^{\omega - 1} \, ds + \frac{R^\omega}{\omega}\right) (\psi(t))^2 \, dt\right)^{1/2} \\
& = \left(\frac{\omega R^{1 - \omega}}{\omega - 1} \left(\int_R^\infty \int_s^\infty (\psi(t))^2 \, dt \, s^{\omega - 1} \, ds + \frac{R^\omega}{\omega} \int_R^\infty \psi^2 \, dt\right)\right)^{1/2} \\
& \le \left(\frac{\omega R^{1 - \omega}}{\omega - 1} \left(\int_R^\infty s^{\omega - \sigma - 1} \, ds + \frac{R^{\omega -\sigma}}{\omega}\right) \right)^{1/2} \\
& = \left(\frac{\sigma R^{1 - \sigma}}{(\sigma - \omega)(\omega - 1)}\right)^{1/2}.
\end{split}
\]
Here we have used H\"older's inequality, Fubini's theorem, and the inequality from the hypothesis.
Choosing $\omega = \frac{1}{2}(\sigma + 1)$ finally gives the inequality stated.
\end{proof}

\bibliographystyle{amsplain}
\bibliography{bib-3}

\end{document}